\documentclass[12pt, leqno]{amsart}
\usepackage{amsmath}
\usepackage{amssymb}
\usepackage{amsthm}
\usepackage{enumerate}
\usepackage[mathscr]{eucal}
\usepackage{xcolor}
\theoremstyle{plain}
\usepackage{tikz}
\usepackage{soul}
\textheight 22.90truecm \textwidth 15.5truecm
\setlength{\oddsidemargin}{0.30in}\setlength{\evensidemargin}{0.30in}
\setlength{\topmargin}{-.5cm}

\usepackage[normalem]{ulem}
\newtheorem{theorem}{Theorem}[section]
\newtheorem{lemma}[theorem]{Lemma}
\newtheorem{prop}[theorem]{Proposition}
\theoremstyle{definition}
\newtheorem{definition}[theorem]{Definition}
\newtheorem{remark}[theorem]{Remark}

\newtheorem{cor}[theorem]{Corollary}
\theoremstyle{remark}




\begin{document}
	\title[ On symmetric functions and symmetric operators on Banach spaces]{ On symmetric functions and symmetric operators on Banach spaces}
	
\author[Paul, Sain and  Sohel ]{Kallol Paul, Debmalya Sain and Shamim Sohel  }
	
	\address[Paul]{Vice-Chancellor, Sadhu ram Chand Murmu University,  Jhargram,  West Bengal 721524, India.\\
	Department of Mathematics, Jadavpur University (on lien), Kolkata 700032, India.}	\email{kalloldada@gmail.com}
	
		\address[Sain]{Department of Mathematics\\ Indian Institute of Information Technology, Raichur\\ Karnataka 584135 \\INDIA}	\email{saindebmalya@gmail.com}
	
		\address[Sohel]{Department of Mathematics, Jadavpur University, Kolkata 700032, India.}	\email{shamimsohel11@gmail.com}
	
		\thanks{The third  author would like to thank  CSIR, Govt. of India, for the financial support in the form of Senior Research Fellowship under the mentorship of Prof. Kallol Paul. }

	\begin{abstract}
		We study  left symmetric and right symmetric elements  in the space $\ell_{\infty}(K, \mathbb{X}) $ of bounded functions from a non-empty set $K$ to a Banach space $\mathbb{X}.$  We prove that a non-zero element $ f \in\ell_{\infty}(K, \mathbb{X}) $ is left symmetric if and only if $f$ is zero except for an element $k_0 \in K$ and $f(k_0)$ is left symmetric in $\mathbb{X}.$  We  characterize left  symmetric elements in the space $C_0(K, \mathbb{X}),$ where $K$ is a locally compact  perfectly normal space. We also study the right symmetric elements in $\ell_{\infty}(K, \mathbb{X}).$   Furthermore, we characterize right symmetric elements in $C_0(K, \mathbb{X}),$ where $K$ is a locally compact Hausdorff space and $\mathbb{X}$ is real Banach space.   As an application of the results obtained in this article, we  characterize the left symmetric and right symmetric operators on some special Banach spaces. These results improve and generalize the existing ones on the study of left and right symmetric elements in operator spaces. 
	\end{abstract}
	
\subjclass[2020]{Primary 47L05, Secondary 46B20}
\keywords{Birkhoff-James orthogonality, left symmetric points, right symmetric points, space of bounded functions, space of continuous functions }

	\maketitle
	\section{Introduction}

	In the realm of  Banach space geometry, the concept of symmetric points plays an important role. Unlike Hilbert spaces, the Birkhoff-James orthogonality is not symmetric in general Banach spaces. In view of this, two new notions of  left symmetric points and right symmetric points have been introduced \cite{S1}. Several mathematicians have studied the symmetric points and their applications in the isometric theory of Banach spaces in \cite{CSS,GSP, PMW,S1, SRBB, T} to gain insights into  the geometric aspects of the concerned spaces, including the study of onto isometries.  In this  article, we study the symmetric points in $\ell_{\infty}(K, \mathbb{X}),$ the space of all bounded functions from $K$ to $\mathbb{X}$ and $C(K, \mathbb{X}),$ the space of all continuous functions from $K$ to $\mathbb{X}$, where $\mathbb{X}$ is a Banach space and $K$ is a non-empty set.\\
	
	We use the symbols $ \mathbb{X}, \mathbb{Y}$ to denote Banach spaces  over the field  $\mathbb{K},$ where  $\mathbb{K}$ is either the complex field $\mathbb{C}$ or the real field $\mathbb{R}.$  Let $ B_{\mathbb{X}}= \{ x \in \mathbb{X}: \|x\| \leq 1\} $  and $ S_{\mathbb{X}}= \{ x \in \mathbb{X}: \|x\| = 1\} $ denote  the unit ball and the unit sphere of $\mathbb{X},$ respectively. The  dual space of $ \mathbb{X}$ is denoted by  $ \mathbb{X}^*.$  Let $ \mathbb{L}(\mathbb{X}, \mathbb{Y}) $ $(\mathbb{K}(\mathbb{X}, \mathbb{Y}) )$ denote the Banach space of all bounded (compact) linear operators  from $\mathbb{X}$ to $\mathbb{Y}.$ The convex hull of a non-empty set $ S  \subset \mathbb{X}$  is the intersection of all convex sets in $\mathbb{X}$ containing $S$ and it is denoted by $co (S).$  For a non-empty convex set $A,$ an element $z \in A$ is said to be an extreme point of $ A $ if the equality $z = (1 - t)x + t y$, with $t \in (0, 1)$ and $x, y \in A,$ implies that $ x = y = z.$ The set of all extreme points of $A$ is denoted by $Ext(A).$ A Banach space is said to be strictly convex if every point of $S_{\mathbb{X}}$ is an extreme point of $B_{\mathbb{X}}.$
	For  $T \in \mathbb{L}(\mathbb{X}, \mathbb{Y}),$ the norm attainment set of $T,$  denoted by $M_T,$ is defined as $M_T=\{x \in S_{\mathbb{X}}: \|Tx\|=\|T\|\}.$ 	Given any non-zero $ x \in \mathbb{X},$ $ f \in S_{\mathbb{X}^*} $ is said to be a support functional at $x$  if $ f(x)= \|x\|.$ Let $ J(x) := \{ f \in S_{\mathbb{X}^*}: f(x)= \|x\|\}.$  Given a subset $M$ of $\mathbb{X}^*,$ $\overline{M}^{w^*}$ denotes the closure of $M$ with respect to the weak*- topology defined on $\mathbb{X}^*.$

		Let us now recall from  \cite{B, J} that  an element $x \in \mathbb{X}$ is said to be Birkhoff-James orthogonal to $y \in \mathbb{X}$ if $ \| x + \lambda y \| \geq \|x\|,$ for all $\lambda \in \mathbb{K}.$   Symbolically, it is written as $ x \perp_B y.$ For real Banach spaces, this orthogonality relation is  dissected in two parts as $x^+$ and $x^-,$ in \cite{S2}. Given $x, y \in \mathbb{X},$ we say that $y \in x^+$   if $\|x+ \lambda y\| \geq \|x\|,$ for any $\lambda \geq 0$ and $y \in x^-$ if $\|x+ \lambda y\| \geq \|x\|,$ for any $\lambda \leq 0.$ For any $x, y \in \mathbb{X},$ it is easy to verify that either $y \in x^+$ or $y \in x^-.$ Approximate versions of these concepts have been introduced in \cite{SPM}, for further analyzing the geometry of Banach spaces as follows :  For $x, y \in \mathbb{X}$ and $\epsilon \in [0,1),$ we say that $y \in x^{+\epsilon}$ if $\|x+ \lambda y\| \geq \sqrt{1- \epsilon^2} \|x\|, \forall \lambda \geq 0$ and  $y \in x^{-\epsilon}$ if $\|x+ \lambda y\| \geq \sqrt{1- \epsilon^2} \|x\|, \forall \lambda \leq 0.$
		
			   It is clear that in general, Birkhoff-James orthogonality is  not symmetric. In this context the notions of left and right symmetric  points were introduced in \cite{S1}:
		  
	\begin{definition}
		An element $x \in \mathbb{X}$ is said to be a left symmetric point if $x \perp_B y$ implies
		that $y \perp_B x$ for all $y \in \mathbb{X}.$ Similarly, a point $ x \in \mathbb{X}$ is said to be a right symmetric point if
		$y \perp_B x$ implies that $x \perp_B y$ for all $y \in  \mathbb{X}.$ An element $x \in \mathbb{X}$ is said to be a symmetric point if $x$ is both left symmetric and right symmetric.   
	\end{definition}

While complete characterizations have been obtained for left symmetric points \cite[Th. 2.1]{SRBB} and right symmetric points \cite[Th. 2.2]{SRBB} in real Banach spaces, explicitly determining these points still remains a challenging task in the space of bounded (continuous) functions. Of course, this difficulty is further elevated in the complex case. This article aims to address this problem by determining the explicit forms of left (right) symmetric elements in the bounded (continuous) function spaces.

	For  a non-empty set $K$ and a Banach space $\mathbb{X},$ the Banach space of all bounded functions defined from $K$ to $\mathbb{X},$  endowed with the supremum norm, is denoted by $\ell_{\infty}(K, \mathbb{X}).$ Given a compact Hausdorff topological space $K$ and a Banach space $\mathbb{X}$, we write $C(K, \mathbb{X} )$ to denote the Banach space of all bounded continuous functions from $K$ to $\mathbb{X}$, endowed with the supremum norm. Clearly, $C(K,\mathbb{X})$ is  embedded into  $\ell_{\infty}(K, \mathbb{X}).$ Whenever $K$ is finite, it follows trivially that $\ell_{\infty}(K, \mathbb{X}) = C(K, \mathbb{X}).$ For the sake of simplicity, if $K$ is a finite set and $|K|=n,$ then $\ell_{\infty}(K, \mathbb{X})$ is denoted as $	\ell_{\infty}^n(\mathbb{X}).$ Given a locally compact Hausdorff space $K$ and a Banach space $\mathbb{X}$, the space $C_0(K, \mathbb{X})$ is the space of all bounded continuous function $f$ having the property that for any $\epsilon >0,$ there exists a compact set $\Gamma \subset K$ such that $\|f(k)\|< \epsilon,$ for any $k \in K\setminus \Gamma.$ So, whenever $K$ is compact, $C_0(K, \mathbb{X})= C(K, \mathbb{X}).$
		 For a function $f \in C_0(K, \mathbb{X}),$ the norm attainment set of $f,$ denoted by $M_f,$ is defined as $	M_f= \{k\in K: \|f(k)\|= \|f\|\}.$ Whenever $\mathbb{X}= \mathbb{R}$ or $ \mathbb{C},$ we use the standard    notations  $C(K)$ and $C(K_0)$  in place of $C(K, \mathbb{X})$ and $C_0(K, \mathbb{X}),$ respectively.\\

	This article is divided into three sections including the introductory one.  In the following preliminary section we study some basic results on Birkhoff-James orthogonality in the function spaces. The main results of this article are further divided into two sections. In Section-I,  we study the symmetric points in the spaces $\ell_{\infty}(K, \mathbb{X}), C(K, \mathbb{X}),$ and $C_0(K, \mathbb{X}).$ We characterize the left symmetric points in the space $\ell_{\infty}(K, \mathbb{X})$ and provide a necessary condition for right symmetric points. We also obtain a complete characterization of left symmetric functions in the space $C_0(K, \mathbb{X}),$ under the condition that $K$ is locally compact and perfectly normal. Moreover, we characterize the right symmetric points in $C(K, \mathbb{X}),$ under the assumption that $K$ is a compact Hausdorff space. As an application of our study, we provide a necessary condition for onto linear isometries in the space $\ell_{\infty}^n(\mathbb{X}).$ In Section-II,  we study the symmetric points in the space of bounded linear operators. Using the results obtained in Section-I, we characterize the symmetric points for the spaces $\mathbb{K}(\mathbb{X}, C(K)).$ We also present refinements of some known results on symmetric points in the spaces $\mathbb{L}(\ell_{1}^n, \mathbb{X}) $ and $\mathbb{L}(\mathbb{X}, \ell_{\infty}^n).$

	\section{Preliminaries}

	The following characterizations of Birkhoff-James orthogonality and relevant results  in terms of support functionals will be used in many places in this article.
	
	\begin{theorem}\cite{J}\label{J}
		Let $\mathbb{X}$ be a  Banach space and let $ x, y \in \mathbb{X}.$ Then $x \perp_B y$ if and only if there exists $f \in \mathbb{X}^*$ such that $f(x)=\|f\| \|x\|$ and $ f(y)=0. $
	\end{theorem}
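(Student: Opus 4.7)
The statement is the classical James theorem characterizing Birkhoff-James orthogonality via norming/support functionals, and my plan is to prove the two implications separately, with Hahn--Banach doing all the heavy lifting in the nontrivial direction.

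For the sufficient direction, I would argue directly from the definitions. Suppose $f \in \mathbb{X}^*$ satisfies $f(x) = \|f\|\|x\|$ and $f(y) = 0$. Then for every $\lambda \in \mathbb{K}$,
\[
\|f\|\,\|x + \lambda y\| \;\geq\; |f(x + \lambda y)| \;=\; |f(x) + \lambda f(y)| \;=\; |f(x)| \;=\; \|f\|\,\|x\|,
\]
so $\|x + \lambda y\| \geq \|x\|$ whenever $\|f\| > 0$ (and the case $\|f\| = 0$ forces $\|x\| = 0$, making the conclusion trivial). This gives $x \perp_B y$.

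For the necessary direction, which is the substantive part, I would invoke Hahn--Banach on the at-most two-dimensional subspace $M := \operatorname{span}\{x,y\}$. Assuming $x \neq 0$ (the degenerate case is immediate), I would define a linear functional $g : M \to \mathbb{K}$ by $g(\alpha x + \beta y) := \alpha \|x\|$. The central point is to check that $\|g\|_{M^*} = 1$: clearly $g(x) = \|x\|$, so $\|g\|\geq 1$, and for $\alpha \neq 0$ the hypothesis $x \perp_B y$ yields
\[
|g(\alpha x + \beta y)| \;=\; |\alpha|\,\|x\| \;\leq\; |\alpha|\,\bigl\|x + (\beta/\alpha)\,y\bigr\| \;=\; \|\alpha x + \beta y\|,
\]
so $\|g\| \leq 1$ on $M$. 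I would then extend $g$ to a norm-preserving $f \in \mathbb{X}^*$ via Hahn--Banach; the extension satisfies $\|f\| = 1$, $f(x) = \|x\| = \|f\|\|x\|$, and $f(y) = 0$, as required.

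The only genuine subtlety is handling the case where $x$ and $y$ are linearly dependent, so that the formula for $g$ must be checked to be well-defined on $M$. If $y = cx$ for some scalar $c$, then $x \perp_B y$ forces $|1 + \lambda c|\geq 1$ for all $\lambda \in \mathbb{K}$, which immediately implies $c = 0$, i.e.\ $y = 0$; in that case any norming functional at $x$ (whose existence is itself a Hahn--Banach consequence) works. Apart from this bookkeeping, no real obstacle arises, since the whole argument rests on a single application of the Hahn--Banach extension theorem.
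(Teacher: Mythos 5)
Your argument is correct and is essentially James's original one; note that the paper itself states this result only as a citation to \cite{J} and gives no proof, so there is nothing internal to compare against. Both directions are handled properly: the Hahn--Banach extension of $g(\alpha x+\beta y)=\alpha\|x\|$ from $\operatorname{span}\{x,y\}$, with the norm bound $|g(\alpha x+\beta y)|\le\|\alpha x+\beta y\|$ coming precisely from $x\perp_B y$, is the standard route, and you correctly dispose of the linearly dependent case by showing $x\perp_B cx$ forces $c=0$. One small slip: in the sufficiency direction your claim that ``$\|f\|=0$ forces $\|x\|=0$'' is not right --- the zero functional satisfies $f(x)=\|f\|\,\|x\|$ and $f(y)=0$ for \emph{every} pair $x,y$, so it gives no information and the implication genuinely fails for it (take $\mathbb{X}=\mathbb{R}$, $x=y=1$). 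This is a defect of the literal statement rather than of your argument: the theorem is meant to be read with $f\neq 0$ (equivalently $f\in J(x)$ after normalization), and under that reading your proof is complete.
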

	
	\begin{lemma}\label{functional}\cite[Lemma. 7.3.2]{MPS}
		Let $\mathbb{X}$ be a real Banach space and $x, u \in \mathbb{X}.$ Then the following holds true:
		\begin{itemize}
			\item[(i)] $u \in x^+ \setminus x^-$ if and only if $f(u) > 0,$ $\forall f \in J(x).$
			\item[(ii)] $u \in x^- \setminus x^+$ if and only if $f(u) < 0,$ $\forall f \in J(x)$.
		\end{itemize}
	\end{lemma}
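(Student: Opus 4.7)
The plan is to reduce both statements to the one-sided directional derivatives of the norm, and then to a short Hahn--Banach/support-functional argument. Since replacing $u$ by $-u$ swaps $x^+$ and $x^-$ and flips the sign of $f(u)$, statement (ii) is immediate from (i), so I would focus entirely on (i).

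The key auxiliary fact I would first establish is the pair of equivalences
\[
u \in x^+ \iff \exists\, f \in J(x)\ \text{with}\ f(u) \geq 0,\qquad u \in x^- \iff \exists\, f \in J(x)\ \text{with}\ f(u) \leq 0.
\]
The ``if'' direction is a one-line calculation: if $f \in J(x)$ and $f(u) \geq 0$, then for every $\lambda \geq 0$,
\[
\|x+\lambda u\| \;\geq\; f(x+\lambda u) \;=\; \|x\| + \lambda f(u) \;\geq\; \|x\|,
\]
so $u \in x^+$; the case $u \in x^-$ is analogous with $\lambda \leq 0$. For the ``only if'' direction, consider the convex function $\phi(\lambda) = \|x + \lambda u\|$. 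Membership $u \in x^+$ is exactly the assertion $\phi(\lambda) \geq \phi(0)$ for all $\lambda \geq 0$, which forces the right derivative $\phi'_+(0) \geq 0$ by convexity. The standard subdifferential identity, together with weak*-compactness of $J(x) \subset B_{\mathbb{X}^*}$, gives $\phi'_+(0) = \max_{f \in J(x)} f(u)$, so the maximum is attained at some $f \in J(x)$ with $f(u) \geq 0$. The corresponding statement for $x^-$ follows by applying this to $-u$.

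Armed with this, the proof of (i) is a two-line exercise in contrapositives. The condition $u \notin x^-$ is, by the auxiliary equivalence, precisely that no $f \in J(x)$ satisfies $f(u) \leq 0$, i.e.\ $f(u) > 0$ for every $f \in J(x)$. This already implies $u \in x^+$ by choosing any $f \in J(x)$ (which is nonempty by Hahn--Banach). Conversely, if $f(u) > 0$ for all $f \in J(x)$, the same auxiliary equivalence yields $u \in x^+$ and excludes $u \in x^-$, giving $u \in x^+ \setminus x^-$.

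The main technical obstacle is the identification $\phi'_+(0) = \max_{f \in J(x)} f(u)$. Everything else is either a direct computation from $f(x) = \|x\|$ or a routine manipulation with signs; the delicate point is passing from $\sup$ to $\max$, which is why weak*-compactness of $J(x)$ is invoked. Once this is available, the bookkeeping between $x^+$, $x^-$, and the sign of $f(u)$ on the full set $J(x)$ falls out immediately.
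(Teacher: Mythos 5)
Your proposal is correct: the reduction of (ii) to (i) via $u\mapsto -u$, the existential characterizations of $x^{+}$ and $x^{-}$ in terms of $J(x)$, and the passage through $\phi'_{+}(0)=\max_{f\in J(x)}f(u)$ (with weak*-compactness of $J(x)$ guaranteeing attainment) all hold, and the contrapositive bookkeeping at the end is sound. Note that the paper itself states this lemma as a quoted result from \cite[Lemma 7.3.2]{MPS} and gives no proof, so there is nothing internal to compare against; your argument is essentially the standard one underlying the cited source, with the one-sided G\^{a}teaux derivative formula for the norm correctly identified as the only nontrivial ingredient.
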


In \cite{MMQRS}, Birkhoff-James orthogonality has been studied in the space $\ell_{\infty}(K, \mathbb{X})$ and $C(K, \mathbb{X}),$ for some non-empty set $K$ and a Banach space $\mathbb{X}.$ In order to describe the symmetric points in these spaces, we need the following characterizations of Birkhoff-James orthogonality that follow directly from \cite[Th. 3.2, Th. 3.5]{MMQRS}.

	\begin{theorem}\label{theorem:martin}\cite[Th 3.2]{MMQRS}
		Let $K$ be a non-empty set and let $\mathbb{X}$ be a Banach space. Suppose $C \subset S_{\mathbb{X}^*}$ be such that $B_{\mathbb{X}^*}= \overline{co(C)}^{w^*} .$ Then for $f, g \in \ell_{\infty}(K, \mathbb{X} ),$  $ f \perp_B g$ if and only if 
		\[
		0 \in co\bigg(\bigg\{ \lim y_n^*(g(k_n)): k_n \in K, y_n^* \in C, \forall n \in \mathbb{N}, \lim y_n^*(f(k_n))= \|f\|\bigg\}\bigg).
		\]
	\end{theorem}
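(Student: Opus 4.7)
The plan is to combine Theorem \ref{J} (James) with the observation that for each $k \in K$ and $y^* \in C$ the evaluation functional $h \mapsto y^*(h(k))$ sits in the closed unit ball of $\ell_\infty(K,\mathbb{X})^*$. The sufficient direction is then handled by an explicit construction of approximating support functionals, while the necessary direction is proved by contradiction via Hahn--Banach separation in the finite-dimensional scalar field $\mathbb{K}$.

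For sufficiency, assume $0 = \sum_{i=1}^m \lambda_i z_i$ with $\lambda_i \geq 0$, $\sum_i \lambda_i = 1$, $z_i = \lim_n y_{n,i}^*(g(k_{n,i}))$, and $\lim_n y_{n,i}^*(f(k_{n,i})) = \|f\|$. Setting $\phi_n(h) := \sum_{i=1}^m \lambda_i\, y_{n,i}^*(h(k_{n,i}))$ gives $\|\phi_n\| \leq 1$ together with $\phi_n(f) \to \|f\|$ and $\phi_n(g) \to 0$. Therefore, for every $\lambda \in \mathbb{K}$, $\|f + \lambda g\| \geq |\phi_n(f+\lambda g)| \to \|f\|$, so that $f \perp_B g$.

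For necessity, assume $f \perp_B g$ and suppose for contradiction that $0 \notin co(S)$, where $S$ denotes the set of limits appearing in the statement. One first checks that $S$ is compact in $\mathbb{K}$: boundedness is immediate from $|y_n^*(g(k_n))| \leq \|g\|$, and closure follows from a standard diagonal-subsequence argument. Hence $co(S)$ is compact by Carath\'eodory. Separating $0$ from $co(S)$ in $\mathbb{K}$ yields $\gamma \in \mathbb{K}$ with $|\gamma| = 1$ and $c > 0$ such that $\text{Re}(\bar\gamma z) \geq c$ for all $z \in co(S)$. The technical heart of the argument is the following uniform estimate: there exists $\epsilon_0 > 0$ such that $\text{Re}(\bar\gamma y^*(g(k))) \geq c/2$ whenever $(k, y^*) \in K \times C$ satisfies $\text{Re}(y^*(f(k))) > \|f\| - \epsilon_0$. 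This is proved by contradiction by extracting a convergent subsequence of $y^*(g(k))$ in the compact disk $\{|z| \leq \|g\|\}$ and using $|y^*(f(k))| \leq \|f\|$ to upgrade real-part convergence to $\|f\|$ into full convergence to $\|f\|$, thereby producing a point of $S$ that violates the separation. Finally, using the identity $\|x\| = \sup_{y^* \in C} \text{Re}(y^*(x))$ (a direct consequence of $B_{\mathbb{X}^*} = \overline{co(C)}^{w^*}$), one writes
\[
\|f - t\bar\gamma g\| = \sup_{k \in K,\, y^* \in C} \bigl[\,\text{Re}(y^*(f(k))) - t\,\text{Re}(\bar\gamma y^*(g(k)))\,\bigr]
\]
and, for $t > 0$ small enough that $t\|g\| < \epsilon_0/2$, splits the supremum according to whether $(k, y^*)$ lies in $A_{\epsilon_0} := \{(k, y^*) : \text{Re}(y^*(f(k))) > \|f\| - \epsilon_0\}$; both cases yield $\|f - t\bar\gamma g\| < \|f\|$, contradicting $f \perp_B g$. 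The main obstacle is precisely the uniform estimate above, which transfers the pointwise separation holding only along individual limit sequences into a uniform lower bound on the preimage $A_{\epsilon_0}$, and which depends crucially on the finite-dimensionality of the scalar field $\mathbb{K}$.
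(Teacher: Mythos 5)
The paper does not actually prove this statement: it is imported verbatim from \cite[Th.\ 3.2]{MMQRS} and used as a black box, so there is no in-paper argument to compare yours against. Judged on its own, your proof is correct. The sufficiency direction via the convex combinations $\phi_n$ of evaluation functionals is sound: $\|\phi_n\|\le 1$, $\phi_n(f)\to\|f\|$, $\phi_n(g)\to 0$, hence $\|f+\lambda g\|\ge\lim_n|\phi_n(f+\lambda g)|=\|f\|$. In the necessity direction the three ingredients all hold up: $S$ is bounded by $\|g\|$ and closed by the diagonal extraction, so $co(S)$ is compact in $\mathbb{K}$ and strict separation from $0$ is legitimate; the identity $\|x\|=\sup_{y^*\in C}\mathrm{Re}\,y^*(x)$ does follow from $B_{\mathbb{X}^*}=\overline{co(C)}^{w^*}$ because $\phi\mapsto\mathrm{Re}\,\phi(x)$ is weak*-continuous and affine; and the uniform estimate on $A_{\epsilon_0}$ is correctly forced by compactness of $\{|z|\le\|g\|\}$ together with the upgrade from $\mathrm{Re}\,y_n^*(f(k_n))\to\|f\|$ to $y_n^*(f(k_n))\to\|f\|$ via $|y_n^*(f(k_n))|\le\|f\|$. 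Two small points you should make explicit, neither a real gap: first, the separation step presupposes $S\neq\emptyset$, which follows from the same identity (pick $(k_n,y_n^*)$ with $\mathrm{Re}\,y_n^*(f(k_n))\to\|f\|$, upgrade to full convergence, and pass to a subsequence along which $y_n^*(g(k_n))$ converges); second, the concluding inequality $\|f-t\bar\gamma g\|<\|f\|$ should be tagged as exhibiting the scalar $\lambda=-t\bar\gamma$ that violates $f\perp_B g$.
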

	
		\begin{cor}\label{martin:continuous}\cite[Th 3.5]{MMQRS}
		Let $K$ be a compact Hausdorff space and let $\mathbb{X}$ be a Banach space. Suppose let $C \subset S_{\mathbb{X}^*}$ be such that $B_{\mathbb{X}^*}= \overline{co(C)}^{w^*}.$  Then  for $f, g \in C(K, \mathbb{X} ),$  $f \perp_B g$ if and only if 
		\[
		0 \in co\bigg(\bigg\{  y^*(g(k)): k \in K, y^* \in C,   y^*(f(k))= \|f\|\bigg\}\bigg).
		\]
	\end{cor}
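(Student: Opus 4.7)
The plan is to derive this corollary from Theorem \ref{theorem:martin} by using the compactness of $K$ and continuity of $f,g$ to replace the sequential limits appearing there by genuine values at a point. Since $C(K,\mathbb{X})$ sits isometrically inside $\ell_{\infty}(K,\mathbb{X})$, Birkhoff--James orthogonality in the former is equivalent to Birkhoff--James orthogonality in the latter, so Theorem \ref{theorem:martin} is directly available.

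The sufficiency (\emph{if}) direction is essentially a free-of-charge consequence: if one has a representation
\[
0 \;=\; \sum_{i=1}^{m} \lambda_i\, y_i^*(g(k_i)), \qquad \sum_i \lambda_i = 1,\ \lambda_i>0,\ y_i^* \in C,\ y_i^*(f(k_i))=\|f\|,
\]
then viewing each pair $(y_i^*, k_i)$ as a constant sequence puts the scalar $y_i^*(g(k_i))$ into the set appearing in Theorem \ref{theorem:martin}, so $0$ lies in the convex hull there and $f \perp_B g$ in $\ell_{\infty}(K,\mathbb{X})$, hence in $C(K,\mathbb{X})$.

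For necessity (\emph{only if}), I would apply Theorem \ref{theorem:martin} to produce a finite convex combination $\sum_i \lambda_i \alpha_i = 0$, where each $\alpha_i = \lim_n y_{i,n}^*(g(k_{i,n}))$ with $y_{i,n}^* \in C$, $k_{i,n} \in K$ and $\lim_n y_{i,n}^*(f(k_{i,n})) = \|f\|$. Passing to subnets (not subsequences, in general), compactness of $K$ yields cluster points $k_i \in K$, while the Banach--Alaoglu theorem yields weak$^*$ cluster points $y_i^* \in B_{\mathbb{X}^*}$. A standard triangle-inequality split,
\[
\bigl| y_{i,n}^*(h(k_{i,n})) - y_i^*(h(k_i)) \bigr| \;\leq\; \|h(k_{i,n})-h(k_i)\| + \bigl|(y_{i,n}^* - y_i^*)(h(k_i))\bigr|,
\]
applied to $h=f$ and $h=g$, exploits continuity of $f,g$ at $k_i$ together with weak$^*$ convergence to conclude $y_i^*(f(k_i)) = \|f\|$ and $y_i^*(g(k_i)) = \alpha_i$.

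The principal obstacle is that the limiting functionals $y_i^*$ land a priori in $\overline{C}^{w^*}$ rather than in $C$ itself, while the conclusion of the corollary insists on $y^* \in C$. To bridge this gap I would use the face $F_i := \{z^* \in B_{\mathbb{X}^*} : z^*(f(k_i)) = \|f\|\}$, which is weak$^*$-compact and convex and contains $y_i^*$. Since $B_{\mathbb{X}^*} = \overline{co(C)}^{w^*}$, one can approximate $y_i^*$ in weak$^*$ by convex combinations of elements of $C$; the facial property of $F_i$ forces those elements to themselves lie in $F_i \cap C$, i.e.\ to satisfy the norming condition at $k_i$. Applying $g(k_i)$ to such an approximating net and absorbing the resulting convex combinations into the original sum then gives $0$ as a convex combination of genuine values $y^*(g(k_i))$ with $y^* \in C$ and $y^*(f(k_i)) = \|f\|$, completing the proof. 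The remaining steps — verifying the facial argument rigorously and checking that convex combinations of weak$^*$-limits can be reorganised into a single convex combination of values of the required form — are routine but constitute the most delicate part of the argument.
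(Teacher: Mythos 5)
The paper does not actually prove this corollary: it is imported verbatim from \cite[Th. 3.5]{MMQRS}, so there is no in-paper argument to compare yours against. Your sufficiency direction is fine (constant sequences feed the pointwise data into Theorem \ref{theorem:martin}, and Birkhoff--James orthogonality transfers between $C(K,\mathbb{X})$ and $\ell_{\infty}(K,\mathbb{X})$ since it only involves the norms $\|f+\lambda g\|$), and the extraction of cluster points $(k_i,y_i^*)$ with $y_i^*(f(k_i))=\|f\|$ and $y_i^*(g(k_i))=\alpha_i$ is also correct.

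The genuine gap is exactly at what you call the principal obstacle, and your proposed bridge does not close it. The facial property of $F_i=\{z^*\in B_{\mathbb{X}^*}: z^*(f(k_i))=\|f\|\}$ applies to an \emph{exact} convex decomposition of a point of $F_i$; you only have a net of convex combinations converging weak$^*$ to $y_i^*$, and the constituents of such approximants need not lie in $F_i$ at all --- they merely satisfy $\sum_j\lambda_j\,\mathrm{Re}\,c_j^*(f(k_i))\to\|f\|$, which forces them to \emph{nearly} norm $f(k_i)$ but never exactly. Worse, the target set $F_i\cap C$ can be empty under the stated hypothesis on $C$: take $\mathbb{X}=\ell_2^2$, $C=S_{\mathbb{X}^*}\setminus\{(1,0)\}$ (so that $B_{\mathbb{X}^*}=\overline{co(C)}^{w^*}$ still holds), and $f\equiv(1,0)$, $g\equiv(0,1)$; then $f\perp_B g$, yet $\{y^*\in C: y^*(f(k))=\|f\|\}=\emptyset$ for every $k$. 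Hence no reorganisation of convex combinations can produce the required representation, and any correct proof must exploit more about $C$ than $B_{\mathbb{X}^*}=\overline{co(C)}^{w^*}$ --- for instance weak$^*$-compactness of $C$, or $C=Ext(B_{\mathbb{X}^*})$ combined with Krein--Milman applied to the face $F_i$, and even then one first lands in the \emph{closed} convex hull and needs a separate compactness argument to drop the closure. The honest options are to cite the result as the paper does, or to reprove it under hypotheses on $C$ strong enough to exclude the example above.
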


Observe that $B_{\mathbb{X}^*}$ being weak*-compact and convex, it follows from the Krein-Milman Theorem that $B_{\mathbb{X}^*}= \overline{co(Ext(B_{\mathbb{X}^*}))}^{w*}.$ Thus, both Theorem \ref{theorem:martin} and  Corollary \ref{martin:continuous} remain valid if we take $C= Ext(B_{\mathbb{X}^*}).$\\

 In the same spirit, we study orthogonality in the space $C_0(K, \mathbb{X}),$ when $K$ is locally compact Hausdorff. For this purpose, we require the following observations.

	\begin{prop}\label{compact}
		Let $K$ be a locally compact Hausdorff space and let $\mathbb{X}$ be a Banach space. Then for any $f \in S_{C_0(K, \mathbb{X})},$ $ M_f$ is non-empty and compact.
	\end{prop}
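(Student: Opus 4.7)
The plan is to exploit the defining decay property of $C_0(K,\mathbb{X})$ to confine $M_f$ inside a compact subset of $K$, and then use continuity of $f$ together with compactness to produce a norm-attaining point.

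First I would prove non-emptiness. Since $\|f\|=1$, there exists a sequence $(k_n)\subset K$ with $\|f(k_n)\|\to 1$. Applying the defining property of $C_0(K,\mathbb{X})$ with $\epsilon=\tfrac12$, choose a compact set $\Gamma\subset K$ such that $\|f(k)\|<\tfrac12$ for every $k\in K\setminus\Gamma$; then $k_n\in\Gamma$ for all but finitely many $n$. By compactness of $\Gamma$, some subnet (or, since $\Gamma$ is Hausdorff but not necessarily metrizable, a convergent subnet) $k_{n_\alpha}$ converges to some $k_0\in\Gamma$. Continuity of $f$ and of the norm then give $\|f(k_0)\|=\lim_\alpha\|f(k_{n_\alpha})\|=1$, so $k_0\in M_f$.

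Next I would prove compactness. Pick any $\epsilon\in(0,1)$ and produce a compact $\Gamma_\epsilon\subset K$ with $\|f(k)\|<\epsilon$ for $k\notin\Gamma_\epsilon$. Since every $k\in M_f$ satisfies $\|f(k)\|=1>\epsilon$, we obtain $M_f\subset\Gamma_\epsilon$. On the other hand, the map $k\mapsto\|f(k)\|$ is continuous $K\to\mathbb{R}$, whence $M_f=(\|f(\cdot)\|)^{-1}(\{1\})$ is closed in $K$, and therefore closed in $\Gamma_\epsilon$. A closed subset of a compact Hausdorff space is compact, so $M_f$ is compact.

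The argument is essentially routine; the only mildly delicate point is that $K$ need not be first countable, so one should be careful to use nets (or, equivalently, the fact that $\{k\in\Gamma:\|f(k)\|\ge 1-1/n\}$ is a decreasing family of non-empty compact sets with the finite intersection property) instead of sequences when extracting the limit point $k_0$. With that caveat handled, both parts follow immediately from the definition of $C_0(K,\mathbb{X})$ and continuity of the norm composed with $f$.
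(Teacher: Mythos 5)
Your proof is correct and follows essentially the same route as the paper: both arguments use the decay property of $C_0(K,\mathbb{X})$ to trap $M_f$ inside a compact set $\Gamma$, and then invoke continuity of $k\mapsto\|f(k)\|$. The only cosmetic difference is in the compactness step, where you observe that $M_f$ is closed in $\Gamma$ (hence compact), while the paper extracts a convergent subnet of an arbitrary net in $M_f$; your version is marginally cleaner but the content is identical.
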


	\begin{proof}
		Let $\epsilon= \frac{1}{2}.$ Suppose that $\Gamma \subset K$ is a compact set such that $\|f(k)\|< \epsilon,$  $\forall k \in K \setminus \Gamma.$ Clearly, 
	\begin{eqnarray*}
		\|f\|= \sup_{k \in K} \|f(k)\|= \sup_{k \in \Gamma} \|f(k)\|.
	\end{eqnarray*}
As $\Gamma$ is compact, there exists $k' \in \Gamma$ such that $\|f(k')\|= \sup_{k \in \Gamma} \|f(k)\|= \|f\|.$ Hence $k' \in M_f$ and $M_f$ is non-empty.
			Let $k_\alpha$ be a net in $M_f.$ Clearly, $\{k_\alpha\} \subset \Gamma.$ As $\Gamma$ is compact, there exists a subnet $\{k_{\alpha_\lambda}\}$ of $\{k_{\alpha}\}$ such that  $k_{\alpha_\lambda} \to k_0 \in \Gamma.$ To prove that $M_f$ is compact, it suffices to show that $ k_0 \in M_f.$ Since $f$ is continuous, it follows that $f(k_{\alpha_\lambda}) \to f(k_0)$ and $\|f(k_{\alpha_\lambda})\| \to \|f(k_0)\|.$ As $\{k_{\alpha_\lambda}\} \in M_f,$ we obtain that $\|f(k_0)\|=1.$
	\end{proof}

	\begin{theorem}\label{martin:continuous2}
	Let $K$ be a locally compact Hausdorff space and  let $\mathbb{X}$ be a Banach space. Suppose $f, g \in C_0(K, \mathbb{X} ).$ Then $ 	f \perp_B g  $  if and only if 
	\[
0 \in co\bigg(\bigg\{  y^*(g(k)): k \in K, y^* \in Ext(B_{\mathbb{X}^*}),   y^*(f(k))= \|f\|\bigg\}\bigg).
	\]
\end{theorem}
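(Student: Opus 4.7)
The plan is to reduce the locally compact case to the compact case already handled by Corollary \ref{martin:continuous}, by passing to the Alexandroff one-point compactification of $K$. The case $f = 0$ being automatic, I would assume $\|f\| > 0$ throughout. Let $\hat{K} = K \cup \{\infty\}$ be the one-point compactification, which is compact Hausdorff since $K$ is locally compact Hausdorff.

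The key observation is that the defining property of $C_0(K, \mathbb{X})$ is precisely the condition needed for any $h \in C_0(K, \mathbb{X})$ to admit a continuous extension $\hat{h} : \hat{K} \to \mathbb{X}$ via $\hat{h}(\infty) = 0$. The map $h \mapsto \hat{h}$ is then a linear isometry from $C_0(K, \mathbb{X})$ into $C(\hat{K}, \mathbb{X})$ satisfying $\widehat{f + \lambda g} = \hat{f} + \lambda \hat{g}$. Since Birkhoff-James orthogonality depends only on the norm, this yields $f \perp_B g$ in $C_0(K, \mathbb{X})$ if and only if $\hat{f} \perp_B \hat{g}$ in $C(\hat{K}, \mathbb{X})$.

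Next I would apply Corollary \ref{martin:continuous} with $C = \mathrm{Ext}(B_{\mathbb{X}^*})$ (using the remark made just after that corollary) to express $\hat{f} \perp_B \hat{g}$ as
\[
0 \in co\bigl(\bigl\{ y^*(\hat{g}(k)) : k \in \hat{K},\; y^* \in \mathrm{Ext}(B_{\mathbb{X}^*}),\; y^*(\hat{f}(k)) = \|\hat{f}\| \bigr\}\bigr).
\]
The final step, which is the only part specific to this theorem, is to excise $\infty$ from the index set: since $\|\hat{f}\| = \|f\| > 0$ while $\hat{f}(\infty) = 0$, one has $y^*(\hat{f}(\infty)) = 0 \neq \|\hat{f}\|$ for every $y^* \in \mathrm{Ext}(B_{\mathbb{X}^*})$, so the constraint $y^*(\hat{f}(k)) = \|\hat{f}\|$ forces $k \in K$. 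On $K$ the extensions agree with the originals, and the set collapses exactly to the one appearing in the theorem statement.

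There is no genuine obstacle: the reduction by compactification turns the problem into one already solved, and the only item requiring verification is the exclusion of the point at infinity, which is immediate from $\|f\| > 0$. Proposition \ref{compact} is implicitly useful in guaranteeing that the set on the right-hand side is non-empty whenever $f$ is non-zero, and it could alternatively be used to run the proof of Corollary \ref{martin:continuous} directly inside $K$, bypassing the compactification, but the one-point compactification route is shorter and essentially computation-free.
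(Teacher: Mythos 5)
Your proposal is correct, and it takes a genuinely different route from the paper. The paper proves the theorem ``internally'': it first establishes (Proposition \ref{compact}) that $M_f$ is non-empty and compact for $f \in C_0(K,\mathbb{X})$, and then asserts that the argument of \cite[Th. 3.5]{MMQRS} goes through verbatim in the locally compact setting --- so the reader must reopen that proof and check that compactness of $M_f$ is all it really uses. You instead perform a formal reduction: extend by $0$ to the Alexandroff compactification $\hat{K}$, note that $h \mapsto \hat{h}$ is a linear isometry of $C_0(K,\mathbb{X})$ into $C(\hat{K},\mathbb{X})$ (so Birkhoff--James orthogonality, which depends only on the norms of $f+\lambda g$, is preserved both ways), apply Corollary \ref{martin:continuous} on $\hat{K}$, and observe that the constraint $y^*(\hat{f}(k))=\|\hat f\|>0$ excludes $k=\infty$ since $\hat f(\infty)=0$. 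Every step checks out, including the degenerate cases: if $K$ is already compact, $\infty$ is merely an isolated point of $\hat K$ and nothing breaks; and the $f=0$ case is indeed automatic, though it silently uses that $\mathrm{Ext}(B_{\mathbb{X}^*})$ is symmetric under $y^*\mapsto -y^*$ so that the (non-empty) value set has $0$ in its convex hull. What your approach buys is a genuinely computation-free proof that treats the compact case as a black box and makes Proposition \ref{compact} unnecessary for this theorem; what the paper's approach buys is a template that would survive in settings where no convenient compactification is available, and it keeps Proposition \ref{compact} in play, which the authors need anyway later (e.g.\ in Theorems \ref{perfect}, \ref{right:sufficient} and \ref{locally compact}).
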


\begin{proof}
	From Proposition \ref{compact}, for any $f \in C_0(K, \mathbb{X})$, $M_f$ is non-empty and  compact. Now the proof of this theorem follows in the same line of the proof of \cite[Th. 3.5]{MMQRS}. 
\end{proof}

For a real Banach space $\mathbb{X},$ the following result can be deduced easily by applying Corollary \ref{martin:continuous}.

\begin{cor}\label{ortho:continuous}
	Let $K$ be a compact Hausdorff space and $\mathbb{X}$ be a real Banach space. Let $f, g \in C(K, \mathbb{X}).$ Then $f \perp_B g$ if and only if there exists $k_1, k_2 \in M_f$ such that $g(k_1) \in f(k_1)^+$ and $g(k_2) \in f(k_2)^-.$  
\end{cor}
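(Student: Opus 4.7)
The plan is to derive Corollary \ref{ortho:continuous} directly from Corollary \ref{martin:continuous} by exploiting the fact that the codomain is a real Banach space, so the set appearing inside the convex hull is a subset of $\mathbb{R}$. I will apply the remark immediately after Corollary \ref{martin:continuous} and take $C = Ext(B_{\mathbb{X}^*})$, so that $f \perp_B g$ is equivalent to
\[
0 \in co\bigl(S\bigr), \qquad S := \{ y^*(g(k)) : k \in K,\ y^* \in Ext(B_{\mathbb{X}^*}),\ y^*(f(k)) = \|f\|\} \subset \mathbb{R}.
\]
Since $S$ lies in $\mathbb{R}$, convex hulls are intervals, and $0 \in co(S)$ is equivalent to the existence of $a, b \in S$ with $a \le 0 \le b$.

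For the forward direction, I would pick $y_1^* \in Ext(B_{\mathbb{X}^*})$, $k_1 \in K$ with $y_1^*(f(k_1)) = \|f\|$ and $y_1^*(g(k_1)) \le 0$, and analogously $y_2^*, k_2$ with $y_2^*(g(k_2)) \ge 0$. Since $\|y_i^*\| = 1$, the equality $y_i^*(f(k_i)) = \|f\|$ forces $\|f(k_i)\| = \|f\|$, so $k_i \in M_f$ and $y_i^*$ is a support functional at $f(k_i)$. For $\lambda \le 0$,
\[
\|f(k_1) + \lambda g(k_1)\| \ge y_1^*(f(k_1) + \lambda g(k_1)) = \|f(k_1)\| + \lambda\, y_1^*(g(k_1)) \ge \|f(k_1)\|,
\]
so $g(k_1) \in f(k_1)^-$, and the symmetric computation with $\lambda \ge 0$ yields $g(k_2) \in f(k_2)^+$.

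For the converse, suppose such $k_1, k_2 \in M_f$ exist. If $\lambda \ge 0$, then
\[
\|f + \lambda g\| \ge \|f(k_2) + \lambda g(k_2)\| \ge \|f(k_2)\| = \|f\|,
\]
using $g(k_2) \in f(k_2)^+$; if $\lambda \le 0$, the same estimate at $k_1$ with $g(k_1) \in f(k_1)^-$ gives $\|f + \lambda g\| \ge \|f\|$. Hence $f \perp_B g$.

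There is essentially no hard step: the only thing to verify carefully is the equivalence $0 \in co(S) \iff S$ meets both $(-\infty, 0]$ and $[0, \infty)$ when $S \subset \mathbb{R}$, and the one-line passage from the functional inequality $y_i^*(g(k_i)) \gtreqless 0$ to the Birkhoff--James one-sided condition $g(k_i) \in f(k_i)^{\pm}$. Lemma \ref{functional} is not strictly needed, since the support-functional inequality gives the one-sided estimates directly; it only serves as a conceptual guide.
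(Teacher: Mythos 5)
Your proof is correct and follows exactly the route the paper indicates: the paper gives no explicit argument, stating only that the corollary ``can be deduced easily by applying Corollary \ref{martin:continuous},'' and your reduction of $0 \in co(S)$ for $S \subset \mathbb{R}$ to the existence of a nonpositive and a nonnegative element, together with the support-functional estimates, is precisely that deduction (up to an immaterial swap of the labels $k_1$ and $k_2$).
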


We end this section with the following important property of 
a locally compact Hausdorff space  which will be used later on. 

\begin{theorem}\label{locally compact hausdorff}\cite[Th. 2.7]{R}
Suppose $U$ is open in a locally compact Hausdorff space $K,$ $S \subset U$ and $S$ is compact. Then there is an open set $V$ with compact closure $\overline{V}$ such that $S \subset V \subset \overline{V} \subset U.$ 
\end{theorem}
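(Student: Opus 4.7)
The plan is to prove this classical topological fact by first producing, at each point of $S$, a neighborhood whose closure is compact and contained in $U$, and then combining finitely many such neighborhoods using the compactness of $S$.

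For the pointwise step, I would fix $s \in S$. Local compactness of $K$ supplies an open neighborhood $W_s$ of $s$ with $\overline{W_s}$ compact. I then consider $C_s := \overline{W_s} \setminus U$; this is closed in $\overline{W_s}$, hence compact, and $s \notin C_s$ because $s \in U$. The subspace $\overline{W_s}$ is compact Hausdorff and therefore normal (in particular regular), so I can separate the point $s$ from the compact set $C_s$ inside $\overline{W_s}$ by disjoint open sets. Pulling this back through the subspace topology, I obtain an open neighborhood $V_s$ of $s$ in $K$ with $V_s \subset W_s$ and $\overline{V_s} \cap C_s = \emptyset$, so that $\overline{V_s} \subset W_s \cap U$; moreover $\overline{V_s}$, being a closed subset of the compact set $\overline{W_s}$, is itself compact.

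For the global step, the collection $\{V_s : s \in S\}$ is an open cover of the compact set $S$, which therefore admits a finite subcover $V_{s_1}, \dots, V_{s_n}$. I set
\[
V := \bigcup_{i=1}^n V_{s_i}.
\]
Then $V$ is open and $S \subset V$. Furthermore, $\overline{V} = \bigcup_{i=1}^n \overline{V_{s_i}}$ is a finite union of compact sets each contained in $U$, hence $\overline{V}$ is compact and $\overline{V} \subset U$, as required.

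The main technical obstacle is the pointwise shrinking argument: producing a neighborhood whose closure is simultaneously compact and contained in $U$. This is precisely where the Hausdorff hypothesis is essentially used, via the regularity of the compact Hausdorff subspace $\overline{W_s}$; without Hausdorffness one cannot generally separate a point from a disjoint compact set, and the construction breaks down.
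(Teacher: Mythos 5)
The paper offers no proof of this statement---it is quoted directly from Rudin \cite[Th.~2.7]{R}---so the only comparison available is with Rudin's own argument. Your proof is correct and self-contained. The one slip is notational: at the end of the pointwise step the inclusion should be $\overline{V_s}\subset \overline{W_s}\cap U$ rather than $\overline{V_s}\subset W_s\cap U$, since the closure of $V_s$ may meet the boundary of $W_s$; this is harmless, because all the global step needs is that each $\overline{V_s}$ is compact and contained in $U$. Your route is genuinely different from Rudin's: you shrink locally, separating each $s$ from the compact set $\overline{W_s}\setminus U$ inside the compact Hausdorff (hence normal, hence regular) subspace $\overline{W_s}$, and then take a finite union of the resulting neighborhoods. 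Rudin instead first produces a single open set $G\supset S$ with $\overline{G}$ compact by a plain finite subcover, and then shrinks globally: for each $p\in K\setminus U$ he separates $p$ from $S$ by an open $W_p\supset S$ with $p\notin\overline{W_p}$, and uses the finite intersection property of the compact sets $(K\setminus U)\cap\overline{G}\cap\overline{W_{p}}$ to cut $G$ down to the desired $V$. Both arguments invoke the Hausdorff hypothesis at the analogous separation step; yours has the advantage of isolating the reusable ``local regularity'' lemma (every point of an open set in a locally compact Hausdorff space has a neighborhood with compact closure inside that open set), while Rudin's avoids any discussion of subspace topologies and closures computed in subspaces, which is exactly the place where your write-up has to be (and is, apart from the typo) careful.
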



\section{Main Results.}

\section*{Section-I}

	We begin with the characterization of left symmetric elements in the space $\ell_{\infty}(K, \mathbb{X}).$	
	
	\begin{theorem}\label{left}
		Let $K$ be any non-empty set and let $\mathbb{X}$ be a Banach space. Then $f \in S_{\ell_{\infty}(K, \mathbb{X})}$ is left symmetric  if and only if $f$ satisfies the following: 
		\begin{itemize}
			\item[(i)] there exists a  $k_0 \in K$ such that $f(k_0) \in S_{\mathbb{X}}$ and $f(k)=0, \forall k_0 \neq k \in K .$
			
			\item[(ii)] $f(k_0)$ is  left symmetric.
		\end{itemize}   
	\end{theorem}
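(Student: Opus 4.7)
The plan is to prove sufficiency and necessity separately.

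For sufficiency, suppose $f$ satisfies (i) and (ii) and take any $g \in \ell_{\infty}(K, \mathbb{X})$ with $f \perp_B g$. Applying Theorem \ref{theorem:martin} with $C = Ext(B_{\mathbb{X}^*})$, the relation $f \perp_B g$ is equivalent to $0$ lying in the convex hull of all limits $\lim y_n^*(g(k_n))$ along sequences satisfying $\lim y_n^*(f(k_n)) = \|f\| = 1$. Since $f(k) = 0$ for $k \neq k_0$ while $\|f(k_0)\| = 1$, and $|y_n^*(f(k_n))| \leq \|f(k_n)\|$, any such sequence must satisfy $k_n = k_0$ for all sufficiently large $n$, so the characterization reduces to $f(k_0) \perp_B g(k_0)$ in $\mathbb{X}$. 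Condition (ii) then yields $g(k_0) \perp_B f(k_0)$, and since $f$ vanishes off $k_0$, a direct computation gives
\[
\|g+\lambda f\| = \max\bigl\{\|g(k_0)+\lambda f(k_0)\|,\ \sup_{k \neq k_0}\|g(k)\|\bigr\} \geq \max\bigl\{\|g(k_0)\|,\ \sup_{k \neq k_0}\|g(k)\|\bigr\} = \|g\|
\]
for every $\lambda \in \mathbb{K}$, which shows $g \perp_B f$.

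For necessity, suppose $f \in S_{\ell_{\infty}(K, \mathbb{X})}$ is left symmetric. I proceed in three steps, each time constructing a concrete witness $g$ that would defeat left symmetry if $f$ deviated from the desired form. First, to show $\|f(k)\| \in \{0, 1\}$ for every $k$: if some $k_1$ satisfies $0 < \|f(k_1)\| < 1$, then $\sup_{k \neq k_1}\|f(k)\| = 1$ (because $\|f\| = 1$), so the function $g$ defined by $g(k_1) = f(k_1)$ and $g(k) = 0$ elsewhere gives $f \perp_B g$ automatically, while at $\lambda = -t$ with real $t \in (0, \|f(k_1)\|)$,
\[
\|g + \lambda f\| = \max\{(1-t)\|f(k_1)\|,\ t\} < \|f(k_1)\| = \|g\|,
\]
contradicting $g \perp_B f$. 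Second, to show that $\{k : \|f(k)\| = 1\}$ is a singleton $\{k_0\}$: if $k_1 \neq k_2$ both attained the norm, the witness $g(k_2) = f(k_2)$, $g(k) = 0$ elsewhere yields $f \perp_B g$ but $\|g - \tfrac{1}{2} f\| = \tfrac{1}{2} < 1 = \|g\|$, contradicting $g \perp_B f$. Third, to establish left symmetry of $f(k_0)$: given any $x \in \mathbb{X}$ with $f(k_0) \perp_B x$, the witness $g(k_0) = x$, $g(k) = 0$ elsewhere satisfies $\|f + \lambda g\| = \|f(k_0) + \lambda x\| \geq \|f(k_0)\| = \|f\|$, so $f \perp_B g$; left symmetry of $f$ then forces $\|x + \lambda f(k_0)\| = \|g + \lambda f\| \geq \|g\| = \|x\|$, i.e., $x \perp_B f(k_0)$.

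The main obstacle lies in the first step of the necessity argument: to produce a contradiction when $\|f(k_1)\| \in (0, 1)$, one must simultaneously verify $f \perp_B g$, which requires $\sup_{k \neq k_1}\|f(k)\| = \|f\|$ (valid precisely because $\|f(k_1)\| < \|f\|$), and locate a $\lambda$ at which $\|g+\lambda f\|<\|g\|$ by exploiting the open interval $(0,\|f(k_1)\|)$, in which both competing terms $(1-t)\|f(k_1)\|$ and $t$ are strictly dominated by $\|f(k_1)\|$---an interval that exists only because $\|f(k_1)\|$ is strictly between $0$ and $1$.
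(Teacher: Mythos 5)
Your proof is correct. Both you and the paper reduce everything to the single point $k_0$ and use witness functions supported at one point, but the execution differs in ways worth noting. For sufficiency, the paper splits into cases according to whether the norm of $g$ is approached at $k_0$ or away from it, invoking Theorem \ref{theorem:martin} in one case and a direct estimate in the other; you instead observe that $f\perp_B g$ collapses to $f(k_0)\perp_B g(k_0)$ and then close with the single inequality $\|g+\lambda f\|\geq \max\{\|g(k_0)\|,\ \sup_{k\neq k_0}\|g(k)\|\}=\|g\|$, which handles both cases at once. For necessity, the paper proves (i) in one step by normalizing $f(k_1)$ and verifying both orthogonality relations through the sequential criterion of Theorem \ref{theorem:martin}; you split (i) into ``every value has norm $0$ or $1$'' and ``the norm is attained at exactly one point,'' and defeat each failure with an explicit scalar ($\lambda=-t$ with $t\in(0,\|f(k_1)\|)$, respectively $\lambda=-\tfrac{1}{2}$), so your necessity argument never touches the functional-analytic machinery at all. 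Your two-step decomposition also makes explicit a point the paper glosses over: for the witness supported at a single point to satisfy $f\perp_B g$ one needs the norm of $f$ to be (approximately) attained away from that point, which your step 1 guarantees because $\|f(k_1)\|<1$ and your step 2 guarantees because the norm is attained at the other point. The trade-off is that the paper's Theorem \ref{theorem:martin} computations transfer more directly to the continuous setting of Theorem \ref{perfect}, whereas your elementary estimates exploit the freedom to prescribe $g$ pointwise, which is special to $\ell_\infty(K,\mathbb{X})$.
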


	\begin{proof}
	To prove the sufficient part, let us assume that $f \in S_{\ell_{\infty}(K, \mathbb{X})}$  and $f \perp_B g.$ We prove that $g \perp_B f.$  Let $\{k_n\} \subset K$ such that $\lim \|g(k_n)\|=\|g\|.$ Consider the following two cases:
		
		Case 1: Let $k_n \neq k_0,$ for all but finitely many $n \in \mathbb{N}.$ Consider $y_n^* \in Ext(B_{\mathbb{X}^*})$ such that $\lim y_n^*(g(k_n))= \|g\|.$ Since $f(k_n)=0,$ for all but finitely many $n \in \mathbb{N},$ we obtain that 
		\[
		0 \in co(\{\lim y_n^*(f(k_n)): k_n \in K, y_n^* \in Ext(B_{\mathbb{X}^*}), \forall n \in \mathbb{N}, \lim y_n^*(g(k_n))= \|g\|\}).
		\]
		Now using Theorem \ref{theorem:martin}, we obtain that $g \perp_B f.$
		
	Case 2: Suppose that the sequence $\{k_n\}$ has a subsequence $\{k_{n_r}\}$ such that $k_{n_r}= k_0,\forall r \in \mathbb{N}.$ Then  $\|g\|=\|g(k_0)\|.$ As $f \perp_B g$ and $f(k)=0, \forall k \in K \setminus \{k_0\},$ so for any scalar $\lambda,$
		\[
		\sup\{\|f(k_0)+ \lambda g(k_0)\|, |\lambda| \|g(k)\|: k \in K \setminus \{k_0\}\}= \|f+ \lambda g\| \geq \|f\|= \|f(k_0)\|=1.
		\]
		Let $|\lambda|< \frac{1}{2\|g\|},$ then $|\lambda|\|g(k)\|< \frac{1}{2}.$ So, whenever $|\lambda|< \frac{1}{2\|g\|},$ 
		\[
		\|f(k_0) + \lambda g(k_0)\| \geq \     |f(k_0)\|.
		\]
		From the convexity of the norm, it is easy to observe that for any scalar $\lambda,$ $\|f(k_0) + \lambda g(k_0)\| \geq \|f(k_0)\|.$ So, $f(k_0) \perp_B g(k_0). $	 Since $f(k_0)$ is left symmetric, $g(k_0) \perp_B f(k_0).$ Observe that for any scalar  $\lambda,$ $$ \|g + \lambda f\| \geq \|g(k_0) + \lambda f(k_0)\| \geq \|g(k_0)\|=\|g\|, $$ which is equivalent to $g \perp_B f.$ Thus, $f$ is left symmetric.

	To prove the necessary part, we first show  that $f$ satisfies (i).  Suppose on the contrary that  there exist $k_1, k_2 \in K$ such that $f(k_1) \neq 0 \neq f(k_2),$ where $k_1 \neq k_2$. We choose $\{k_n\}_{n \in \mathbb{N}} \subset  K$   such that      $\lim \|f(k_n)\|=\|f\|.$ 
	  Define $g : K \to \mathbb{X}$ such that  $g(k_1)=\frac{f(k_1)}{\|f(k_1)\|},$  and $g(k)=0,$ for any $k \in K \setminus \{k_1\}.$ Clearly, $\|g\|=1.$ Then for any $y_n^* \in Ext(B_{\mathbb{X}^*})$ with $\lim y_n^*(f(k_n))=\|f\|,$ we obtain  that $y_n^*(g(k_n))=0, \forall n > 1.$ Therefore, 
			$$ 0 \in co(\{ \lim y_n^*(g(k_n)): k_n \in K, y_n^* \in Ext(B_{\mathbb{X}^*}), \forall n \in \mathbb{N}, \lim y_n^*(f(k_n))= \|f\|\}) . $$ So, using Theorem \ref{theorem:martin}, we obtain that $f \perp_B g.$ 
Next we show that $g \not \perp_B f.$			
Let $\{k_n'\} \subset   K$ and $ \{y_n^*\} \subset  Ext(B_{\mathbb{X}^*})$ such that $\lim y_n^*(g(k_n'))=1,$  $ \forall n \in \mathbb{N}.$ Note that $g(k_1) \in S_{\mathbb{X}}$ and $g(k)=0, \forall k \neq k_1.$ As $\lim y_n^*(g(k_n'))=1,$ without loss of generality we assume that $k_n'= k_1,\forall n \in \mathbb{N}.$ As $g(k_1)=  \frac{f(k_1)}{\|f(k_1)\|},$ it follows that $\lim y_n^*(g(k_1))=1. $ Thus, $ \lim y_n^*(f(k_1)) = \|f(k_1)\| \neq 0.$ So, 
\[
0 \notin co(\{\lim y_n^*(f(k_1)): y_n^* \in Ext(B_{\mathbb{X}^*}),\lim y_n^*(g(k_1))=1  \}),
\]
which implies that $g \not \perp_B f.$
This contradicts that $f$ is left symmetric. 
		Therefore, $f$ satisfies (i).
		
		Next we  show that (ii) holds. Suppose on the contrary that $f(k_0)$ is not  left symmetric. Then there exists $w_0 \in S_{\mathbb{X}}$ such that $f(k_0) \perp_B w_0$ and $w_0 \not\perp_B f(k_0).$ So, there exists $\lambda_0 \in \mathbb{K}$ such that $\|w_0 + \lambda_0 f(k_0)\|< \|w_0\|.$ Define $g: K \to \mathbb{X}$ such that $g(k_0)=w_0$ and $g(k)=0,$ for any  $k \in K \setminus\{k_0\}.$ Clearly, $\|g\|=1.$ As $f$ satisfies (i), it follows that for any scalar $\lambda $, $$\|f + \lambda g\| = \|f(k_0)+ \lambda g(k_0)\|= \|f(k_0)+ \lambda w_0\| \geq \|f(k_0)\|.$$ So, $f \perp_B g.$  We note that $ f(k) = 0 \, \, \forall (k_0 \neq) k \in K $  and moreover,
		$$ \|g + \lambda_0 f\|=\|g(k_0) + \lambda_0 f(k_0)\|= \|w_0 + \lambda_0         f(k_0)\|< \|w_0\|=1=\|g\|,$$ 
		which implies that $g \not\perp_B f.$ This contradicts the fact that $f$ is left symmetric. Thus $f(k_0)$ is left symmetric. \\
	\end{proof}
	
	As an immediate corollary of the above theorem we obtain the following observation.

	\begin{cor}
		Let $K$ be a Hausdorff topological  space and $\mathbb{X}$ be a Banach space. Then   there is a non-zero left symmetric continuous function in $\ell_{\infty}(K, \mathbb{X})$ if and only if $K$ contains an isolated point and $\mathbb{X}$ contains a non-zero left symmetric point.
	\end{cor}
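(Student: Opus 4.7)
The plan is to deduce this as a direct consequence of Theorem \ref{left} together with a standard continuity argument that leverages the Hausdorff hypothesis. Roughly, Theorem \ref{left} already tells us that any non-zero left symmetric element of $\ell_{\infty}(K,\mathbb{X})$ is supported at a single point $k_0 \in K$ where it takes a left symmetric value in $S_{\mathbb{X}}$; continuity of such a one-point-supported function is then exactly the statement that $k_0$ is isolated.

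For the forward direction, suppose $f \in S_{\ell_{\infty}(K,\mathbb{X})}$ is continuous and left symmetric. By Theorem \ref{left}, there is a unique $k_0 \in K$ with $f(k_0) \in S_{\mathbb{X}}$ left symmetric and $f(k)=0$ for all $k \ne k_0$. In particular, $\mathbb{X}$ contains a non-zero left symmetric point, namely $f(k_0)$. To see that $k_0$ must be isolated, I would argue by contradiction: if $k_0$ is not isolated, every neighborhood of $k_0$ meets $K \setminus \{k_0\}$, so I can extract a net $(k_\alpha) \subset K \setminus \{k_0\}$ with $k_\alpha \to k_0$. Since $f(k_\alpha) = 0$ for all $\alpha$, continuity of $f$ forces $f(k_0) = \lim f(k_\alpha) = 0$, contradicting $\|f(k_0)\| = 1$. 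Note that the Hausdorff hypothesis is not strictly needed for this half, but it is consistent with the standing setup.

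For the converse, assume $k_0 \in K$ is isolated and $x_0 \in S_{\mathbb{X}}$ is left symmetric. Define $f \colon K \to \mathbb{X}$ by $f(k_0) = x_0$ and $f(k) = 0$ for $k \ne k_0$. Then $f \in \ell_{\infty}(K,\mathbb{X})$ with $\|f\| = 1$. Continuity at $k_0$ is immediate because $\{k_0\}$ is an open neighborhood on which $f$ is constant. For continuity at any $k \ne k_0$, I would use the Hausdorff property to separate $k$ and $k_0$ by disjoint open sets, obtaining an open neighborhood $U$ of $k$ with $k_0 \notin U$; on $U$, $f \equiv 0 = f(k)$, which yields continuity. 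Since $f$ is supported only at $k_0$ with $f(k_0) = x_0$ left symmetric, Theorem \ref{left} ensures $f$ is left symmetric in $\ell_{\infty}(K,\mathbb{X})$, and by construction $f$ is non-zero and continuous.

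There is no real obstacle here: both implications reduce to invoking Theorem \ref{left} and applying elementary point-set topology. The only mild subtlety is recognizing that the one-point-supported description from Theorem \ref{left} is compatible with continuity precisely when the support point is topologically isolated, which is where the Hausdorff assumption enters cleanly via the separation argument.
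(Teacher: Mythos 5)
Your proposal is correct and follows essentially the same route as the paper: both directions reduce to Theorem \ref{left}, with the forward implication showing that continuity of a one-point-supported function forces the support point to be isolated (the paper does this via $f^{-1}$ of a small ball around $f(k_0)$ being $\{k_0\}$, you via a net argument -- a cosmetic difference) and the converse verifying continuity of the obvious one-point-supported function. Your remark pinpointing where the Hausdorff hypothesis is actually used (continuity at points other than $k_0$ in the converse) is a small but accurate refinement of the paper's ``clearly $f$ is continuous.''
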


\begin{proof}
	
		Let $f$ be a non-zero left symmetric continuous function in $\ell_{\infty}(K, \mathbb{X}).$ Without loss of generality assume that $\|f\|=1.$ From Theorem \ref{left}, there exists $k_0 \in K$ and a left symmetric point $w \in S_{\mathbb{X}}$ of $\mathbb{X}$ such that $f(k_0)=w$ and $f(k)=0, \forall k \in K \setminus \{k_0\}.$ Let $U= \{u \in \mathbb{X}: \|u -w\|< \frac{1}{2}\}.$ Clearly, $U$ is open in $\mathbb{X}.$ As $f$ is continuous, $f^{-1}(U)$ is open. Observe that $f^{-1}(U)= \{k_0\}.$ Therefore, $k_0$ is an isolated point.

Conversely,	let $w\in S_{\mathbb{X}}$ be a left symmetric point of $\mathbb{X}$ and let  $k_0$ be an isolated point of $K.$ Define $f : K \to \mathbb{X}$ such that $f(k_0)=w$ and $f(k)=0, \forall K \setminus\{k_0\}.$ Clearly $f $ is continuous and it follows from Theorem \ref{left} that $f$  is left symmetric.
	\end{proof}	
	
			

 As the space $C_0(K, \mathbb{X})$ is a subspace of $\ell_{\infty}(K, \mathbb{X}),$ if a continuous function $f$ satisfies the sufficient condition of Theorem \ref{left}, then $f$ is also  left symmetric  in $C_0(K, \mathbb{X}).$ In the next theorem, we show that the necessary part also holds true for left symmetric point in $C_0(K, \mathbb{X}),$ under the condition that $K$ is  perfectly normal. Let us recall that a topological space $K$ is said to be perfectly normal if  $K$ is normal and every closed set of $K$ is a $G_\delta$ set.
	
	\begin{theorem}\label{perfect}
		Let $K$ be a locally compact, perfectly normal space and let $\mathbb{X}$ be a Banach space. Then   $f \in S_{C_0(K, \mathbb{X})}$  is left symmetric if and only if $f$ satisfies the following : 
		\begin{itemize}
			\item[(i)] there exists  $k_0 \in K$ such that $f(k_0) \in S_{\mathbb{X}}$ and  $f(k)=0,$ $\forall k_0 \neq k \in K .$
				\item[(ii)] $f(k_0) $ is left symmetric.
		\end{itemize} 
		
	\end{theorem}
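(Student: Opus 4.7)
The sufficient direction is essentially free: if $f$ satisfies (i) and (ii), Theorem \ref{left} yields that $f$ is left symmetric in the larger space $\ell_\infty(K, \mathbb{X})$, and since $C_0(K, \mathbb{X})$ is a subspace and Birkhoff--James orthogonality is an intrinsic two-point relation, left symmetry descends to $C_0(K, \mathbb{X})$. So all the work is in the converse. The plan is to import the strategy of Theorem \ref{left} but replace the discontinuous test functions used there by genuine elements of $C_0(K, \mathbb{X})$. The local compactness and perfect normality of $K$ combine to furnish, for each $k_1 \in K$ and each compact $M \subset K$ with $k_1 \notin M$, a continuous compactly supported $\phi : K \to [0,1]$ with $\phi(k_1) = 1$, $\phi|_M \equiv 0$, and $\phi(k) < 1$ for all $k \neq k_1$. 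Indeed, Theorem \ref{locally compact hausdorff} applied with $S = \{k_1\}$ and $U = K \setminus M$ produces an open $V$ with compact closure contained in $K \setminus M$ and containing $k_1$; the standard Urysohn bump yields a continuous $\eta : K \to [0,1]$ with $\eta(k_1) = 1$ and $\mathrm{supp}(\eta) \subset \overline{V}$; perfect normality provides a continuous $h : K \to [0,1]$ with $h^{-1}(0) = \{k_1\}$; and $\phi := \eta(1-h)$ has the asserted properties.

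With this bump in hand, suppose toward contradiction that $f$ is left symmetric but fails (i). First rule out $|M_f| \geq 2$: picking distinct $k_1, k_2 \in M_f$, take $M = \{k_2\}$ and set $g := \phi \cdot f(k_1)$. Then $\|g\| = 1$, $M_g = \{k_1\}$, $g(k_1) = f(k_1)$, and $g(k_2) = 0$. Applying Theorem \ref{martin:continuous2} at $k_2 \in M_f$ with any extreme support functional of $f(k_2)$ (which exists because $J(f(k_2))$ is a weak*-compact face of $B_{\mathbb{X}^*}$) annihilates $g(k_2)$, hence $f \perp_B g$; yet the same criterion applied to $g$ at its unique norming point $k_1$ returns the singleton set $\{1\}$, so $g \not\perp_B f$---a contradiction. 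Thus $M_f = \{k_0\}$ is a singleton. If further some $k_1 \neq k_0$ satisfies $f(k_1) \neq 0$ (necessarily $\|f(k_1)\| < 1$), repeat the argument with $M = \{k_0\}$ and $g := \phi \cdot f(k_1)/\|f(k_1)\|$: now the relevant set on the $g \not\perp_B f$ side becomes $\{\|f(k_1)\|\}$, still missing $0$. This completes (i).

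Once (i) is in place, continuity of $f$ forces $\{k_0\} = f^{-1}(\{x \in \mathbb{X} : \|x\| > 1/2\})$ to be open, so $k_0$ is an isolated point of $K$. Consequently $\{k_0\}$ is clopen and, for any $w \in S_{\mathbb{X}}$, the function $g$ given by $g(k_0) = w$ and $g \equiv 0$ elsewhere is automatically continuous and lies in $C_0(K, \mathbb{X})$. Hence the proof of the corresponding step in Theorem \ref{left} transfers verbatim: if $f(k_0)$ failed to be left symmetric in $\mathbb{X}$, one obtains $w \in S_{\mathbb{X}}$ and $\lambda_0 \in \mathbb{K}$ witnessing $f(k_0) \perp_B w$ with $\|w + \lambda_0 f(k_0)\| < \|w\|$, and the corresponding $g$ yields $f \perp_B g$ but $g \not\perp_B f$, contradicting left symmetry. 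The principal obstacle throughout the proof is exactly the construction of the bump $\phi$: this is where both hypotheses on $K$ are genuinely needed---local compactness to cut down to a compact support via Theorem \ref{locally compact hausdorff}, and perfect normality to realize the strict inequality $\phi < 1$ away from the singleton $\{k_1\}$ via a continuous scalar function whose zero set is exactly $\{k_1\}$.
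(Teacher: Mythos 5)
Your proof is correct and, for the heart of the argument (part (i)), follows essentially the same route as the paper: both use local compactness (via Theorem \ref{locally compact hausdorff}) together with perfect normality and Urysohn's lemma to build a continuous bump $g \in C_0(K,\mathbb{X})$ peaking exactly at a second point where $f$ does not vanish, and then read off $f\perp_B g$ but $g\not\perp_B f$ from Theorem \ref{martin:continuous2}; your two subcases ($|M_f|\ge 2$ versus a second nonzero point off $M_f$) are handled by the paper in one stroke by taking $k_0\in M_f$, an arbitrary $k_1$ with $f(k_1)\neq 0$, and any $v$ with $v\not\perp_B f(k_1)$, of which your choice $v=f(k_1)/\|f(k_1)\|$ is a special case. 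The one genuine difference is in part (ii): the paper repeats the bump construction at $k_0$, whereas you first observe that condition (i) plus continuity forces $k_0$ to be an isolated point, so the discontinuous-looking test function from Theorem \ref{left} is automatically in $C_0(K,\mathbb{X})$ and that argument transfers verbatim. Your route for (ii) is slightly cleaner (and sidesteps a small typo in the paper's construction there), at the cost of the extra observation about isolated points; both are valid.
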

	
	\begin{proof}
		Since the sufficient part follows from Theorem \ref{left}, we prove only the necessary part.

		(i) Suppose on the contrary that there exist $k_0, k_1 \in K$ such that $f(k_0) \neq 0 \neq f(k_1).$ From Proposition \ref{compact}, it follows that $M_f$ is non-empty. Without loss of generality we assume that $f(k_0) \in S_{\mathbb{X}}.$ Let $v \in S_{\mathbb{X}}$ be such that $v \not\perp_B f(k_1).$ As $K$ is locally compact and Hausdorff, Theorem \ref{locally compact hausdorff} ensures that there exists an open set $U$  containing $k_1$ such that $k_0 \notin U$ and $\overline{U}$ is a compact set. As $K$ is perfectly normal and $\{k_1\},   K \setminus U $ are two disjoint closed sets in $K$, so by the Urysohn's lemma (see \cite{Munkres}), there exists a continuous function $h: K \to [0,1]$ such that $h^{-1}(\{1\})= \{k_1\}$ and $h^{-1}(\{0\})= K \setminus U.$ Define $g: K \to \mathbb{X}$ as  $$g(k)= h(k)v  ,~ \forall k \in K.$$
			 Clearly, $g$ is continuous.
	As for any $k \in K \setminus \overline{U},$ $\|g(k)\|=0,$ it is clear that $g \in C_0(K, \mathbb{X}).$
   As $h^{-1} (\{1\})=\{k_1\}, h^{-1}(\{0\})= K \setminus U,$ so  we have $g(k_1)=v,$   and  $g(k)=0,$ $\forall k \in K \setminus U.$ Moreover, $\|g\|=1$ and $M_g=\{k_1\}.$ Now observe that for any scalar $\lambda,$ 
		\[
		\|f + \lambda g\|= \sup_{k \in K}\|f(k)+\lambda g(k)\| \geq \|f(k_0)+ \lambda g(k_0)\|=\|f(k_0)\|=1=\|f\|.
		\]
		Therefore, $f \perp_B g.$
		Also  observe that  $v \not\perp_B f(k_1).$ Using Theorem  \ref{J}, there exists no $y^* \in J(v)$ such that $y^*(f(k_1))=0.$ As $J(v)$ is convex, we deduce that 
		$$0 \notin co(\{y^*(f(k_1)): y^* \in J(v)\}).$$
		 As $M_g=\{k_1\}, $ it is now immediate that 
		$$0 \notin co(\{y^*(f(k_1)): y^* \in Ext(B_{\mathbb{X}^*}), y^*(g(k_1))=\|g\|\}).$$ 
		Therefore,  using Theorem  \ref{martin:continuous2}, $g \not\perp_B f.$ This contradicts that $f$ is  left symmetric.
		
		(ii) Suppose on the contrary that $f(k_0)$ is not left symmetric. So, there exists $v \in S_{\mathbb{X}}$ such that $ f(k_0) \perp_B v$ but $v \not\perp_B f(k_0).$ Let $U$ be an open set containing $k_0$ such that $\overline{U}$ is compact.  As $K$ is perfectly normal, there exists a continuous function $h: K \to [0,1]$ such that $h^{-1}(\{1\})= \{k_1\}$ and $h^{-1}(\{0\})= K \setminus U.$ Define $g: K \to \mathbb{X}$ such that $$g(k)= h(k)v  ,~ \forall k \in K.$$ 
	 Proceeding similarly  as in the proof of (i), we get $g \in C_0(K, \mathbb{X})$
	 and $M_g = \{k_0\}, g(k_0)=v.$  As $f(k_0) \perp_B v$ and $g(k_0)=v,$ observe that for any scalar $\lambda,$ 
		\[
		\|f + \lambda g\|= \sup_{k \in K}\|f(k)+\lambda g(k)\| \geq \|f(k_0)+ \lambda g(k_0)\| = \|f(k_0)+ \lambda v\|\geq \|f(k_0)\|=1=\|f\|.
		\]
		Therefore, $f \perp_B g.$  Also  observe that  $v \not\perp_B f(k_0)$ and following  similar arguments as  in the proof of the first part of the theorem, we obtain that $g \not\perp_B f.$ This contradicts the fact  that $f$ is left symmetric. 
	\end{proof}

We next provide a necessary condition for a right symmetric element in the space $\ell_{\infty}(K, \mathbb{X}).$
	
	\begin{theorem}\label{right}
		Let $K$ be any non-empty set and $\mathbb{X}$ be a  Banach space. Suppose that $f \in S_{\ell_{\infty}(K, \mathbb{X})}$ is right symmetric. Then
		\begin{itemize}
			\item[(i)] $f(k) \in S_{\mathbb{X}},$  $\forall k \in K.$
			\item[(ii)] 	$f(k)$ is right symmetric,  $\forall k \in K.$
		\end{itemize} 
	\end{theorem}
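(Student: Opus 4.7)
The plan is to prove both conclusions by contradiction: in each case I aim to construct an explicit $g \in S_{\ell_\infty(K, \mathbb{X})}$ satisfying $g \perp_B f$ but $f \not\perp_B g$, contradicting the right symmetry of $f$. The common starting observation is that since $\|f\|=1$, whenever $\|f(k_0)\|<1$ one automatically has $\sup_{k \neq k_0} \|f(k)\| = 1$, and it is this asymmetry that the construction will exploit.

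For (i), set $c := \|f(k_0)\| < 1$ and define $g(k_0) := f(k_0)/c$ (or any unit vector if $c=0$) together with $g(k) := -f(k)/\|f(k)\|$ for those $k \neq k_0$ with $f(k) \neq 0$, and $g(k) := 0$ otherwise, so that $\|g\| = 1$. I expect the verification of $g \perp_B f$ to split on the sign of $\lambda$: at $k_0$ one reads $\|g(k_0)+\lambda f(k_0)\| = |1+\lambda c| \geq 1$ for $\lambda \geq 0$, while at any $k \neq k_0$ with $f(k) \neq 0$ the sign flip yields $\|g(k)+\lambda f(k)\| = 1 + |\lambda|\,\|f(k)\| > 1$ for $\lambda < 0$. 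To contradict $f \perp_B g$, I would pick a real $\mu \in (0, 1-c)$; then the coordinate estimates $\|f(k_0)+\mu g(k_0)\| = c+\mu < 1$ and $\|f(k)+\mu g(k)\| = |\|f(k)\|-\mu| \leq \max(\mu, 1-\mu) < 1$ combine to yield $\|f + \mu g\| < 1$.

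For (ii), after using (i) to know $\|f(k)\| = 1$ for every $k$, suppose some $f(k_0)$ is not right symmetric in $\mathbb{X}$ and choose $v \in S_\mathbb{X}$ with $v \perp_B f(k_0)$ together with a scalar $\mu_0$ satisfying $\|f(k_0) + \mu_0 v\| < 1$. Replacing $v$ by $\alpha v$ for a suitable unit scalar $\alpha$ (namely $\pm 1$ in the real case, or an appropriate phase $e^{i\theta}$ in the complex case) preserves $v \perp_B f(k_0)$ and only rotates $\mu_0$, so I may assume $\operatorname{Re}(\mu_0) < 0$. I would then set $g(k_0) := v$ and $g(k) := \epsilon f(k)$ for $k \neq k_0$ with $\epsilon > 0$ chosen small enough that $|1 + \mu_0 \epsilon| < 1$. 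Orthogonality $g \perp_B f$ follows at once from $v \perp_B f(k_0)$ at the $k_0$-coordinate, while evaluating $\|f + \mu_0 g\|$ pointwise gives $\max(\|f(k_0) + \mu_0 v\|,\ |1 + \mu_0 \epsilon|) < 1$, so $f \not\perp_B g$.

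The main obstacle I anticipate is the bookkeeping in (i), since the coordinate that witnesses each inequality depends both on the sign of the scalar and on whether $f$ or $g$ is being perturbed, forcing a clean case split. A secondary subtlety in (ii) is the phase adjustment of $v$, which must be done before choosing $\epsilon$ so that the perturbation $\epsilon f$ actually drives $|1+\mu_0\epsilon|$ below $1$ rather than above.
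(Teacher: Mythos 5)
Your argument is correct. Part (ii) is essentially the paper's own proof: the paper takes $g(k)=-\overline{\mu}\,f(k)$ off $k_0$ with $|\mu|<1$, so that the coefficient $\mu\cdot(-\overline{\mu})=-|\mu|^2$ is automatically real and negative, whereas you normalize the phase of $\mu_0$ first and introduce a separate small parameter $\epsilon$; the two devices are interchangeable. Part (i), however, genuinely diverges. The paper sets $g(k_0)=w_0$ with $w_0\perp_B f(k_0)$ and $g=f$ elsewhere, obtains $g\perp_B f$ for free from the $k_0$-coordinate, and then disproves $f\perp_B g$ by invoking the sequential convex-hull characterization of Birkhoff--James orthogonality in $\ell_\infty(K,\mathbb{X})$ (Theorem \ref{theorem:martin}): any sequence $(k_n)$ with $\lim y_n^*(f(k_n))=1$ eventually avoids $k_0$, so every admissible limit $\lim y_n^*(g(k_n))$ equals $1$ and $0$ cannot lie in the convex hull. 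Your sign-flipped $g$ instead makes $f\not\perp_B g$ visible by a direct norm computation, $\|f+\mu g\|\le\max(c+\mu,\mu,1-\mu)<1$, at the cost of a case split when verifying $g\perp_B f$; this buys a completely elementary, self-contained argument that needs neither Theorem \ref{theorem:martin} nor the existence of a unit vector Birkhoff--James orthogonal to $f(k_0)$. The paper's choice of $g$ has the compensating advantage that it survives into the continuous setting of Theorem \ref{right:sufficient}, where your pointwise sign flip would destroy continuity. One small point to tighten: for complex scalars the dichotomy in (i) should be on the sign of $\operatorname{Re}\lambda$ rather than of $\lambda$; the identities $\|g(k_0)+\lambda f(k_0)\|=|1+\lambda c|$ and $\|g(k)+\lambda f(k)\|=\bigl|\,\lambda\|f(k)\|-1\,\bigr|$ still give the required lower bound $1$ on the half-planes $\operatorname{Re}\lambda\ge 0$ and $\operatorname{Re}\lambda\le 0$ respectively, so the repair is immediate.
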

	
	\begin{proof}
		Let us first prove (i).  Suppose on the contrary that there exists $k_o \in K$ such that $f(k_0) \notin S_{\mathbb{X}}.$ Take $w_0 \in S_{\mathbb{X}}$ such that $w_0 \perp_B f(k_0) .$ Define $g : K \to \mathbb{X}$ such that $g(k_0)= w_0$ and $g(k)= f(k),$ $\forall k_0 \neq k \in K.$ Clearly, $\|g\|=1.$ 	Now observe that for any scalar $\lambda,$ 
		$$ \|g + \lambda f\| \geq \|g(k_0)+ \lambda f(k_0)\|=\|w_0+ \lambda f(k_0)\| \geq \|w_0\|=1= \|g\|.$$ 
		So,  $g \perp_B f.$  We claim that $f \not \perp_B g,$ i.e., 
			\begin{eqnarray*}
			 0 \notin co \big( \big\{\lim y_n^*(g(k_n)): y_n^* \in Ext(B_{\mathbb{Y}^*}), k_n \in K, \forall n \in \mathbb{N}, \lim y_n^*(f(k_n))=\|f\|\big\} \big).
			\end{eqnarray*}
			Consider $k_n \in K$ and $y_n^* \in Ext(B_{\mathbb{Y}^*})$ such that $\lim y_n^*(f(k_n))= \|f\|=1.$	Then there are two possibilities for the sequence $\{k_n\}$:   (a) $\{k_n\}$ has a subsequence $\{k_{n_r}\}$ such that $k_{n_r}= k_0, \forall r \in \mathbb{N}$ and (b) $k_n \neq k_0,$ for all but finitely many $n.$ Case (a) is not possible as $f(k_0) \notin S_{\mathbb{X}}$ and so, $|y_{n_r}^*(f(k_{n_r}))|= |y_{n_r}^*(f(k_0))|\leq \|f(k_0)\|< 1,$ for all $r.$ For Case (b), we get $g(k_n)= f(k_n), $  for all but finitely many $n$ and so, $\lim y_n^*(g(k_n))= \lim y_n^*(f(k_n))=1.$ This shows that 
			$$ 0 \notin co \bigg( \bigg\{\lim y_n^*(g(k_n)): y_n^* \in Ext(B_{\mathbb{Y}^*}), k_n \in K, \forall n \in \mathbb{N}, \lim y_n^*(f(k_n))=\|f\|\bigg\} \bigg).$$  
	Thus $g \perp_B f$ and $f \not \perp_B g,$ which contradicts that $f$ is right symmetric.	 Hence $f(k) \in S_{\mathbb{X}},$ for each $k \in K.$

		Let us now prove (ii).
		Suppose on the contrary that there exists $k_0 \in K$ such that $f(k_0)$ is not right symmetric.  Take $w_0 \in S_\mathbb{X}$ such that $w_0 \perp_B f(k_0) $ but $f(k_0) \not\perp_B w_0.$ Then there exists scalar $\lambda_0$ such that $\|f(k_0)+ \lambda_0 w_0\|< \|f(k_0)\|=1.$ Using the convexity of the norm, it is easy to observe that for any $0 < t \leq 1,$ $\|f(k_0)+ t\lambda_0 w_0\|< \|f(k_0)\|=1.$ Take $\mu = t_0 \lambda_0$ such that $|\mu|<1,$ where $0 < t_0 \leq 1.$ Define $g: K \to \mathbb{X}$ such that $g(k_0)=w_0$ and $g(k)= - \overline{\mu} f(k),$ for any $k \in K \setminus \{k_0\}.$   Clearly, $\|g\|=1.$ 	As $w_0 \perp_B f(k_0),$	we now observe that for any scalar  $\lambda,$ 
		$$ \|g + \lambda f\| \geq \|g(k_0)+ \lambda f(k_0)\|=\|w_0+ \lambda f(k_0)\| \geq \|w_0\|=1= \|g\|.$$ 
		So,  $g \perp_B f.$  On the other hand, 
			\begin{eqnarray*}
			\|f+ \mu g\| & =&  \sup_{k \in K} \|f(k)+ \mu g(k)\| \\
			&=&  \sup\bigg\{\|f(k) - \mu \overline{\mu} f(k) \|, \|f(k_0)+ \mu g(k_0)\|: k \in K \setminus \{k_0\}\bigg\} \\ 
			&=&  \sup \bigg\{|	(1-|\mu|^2)|\|f(k)\|, \|f(k_0)+\mu w_0\| : k \in K \setminus \{k_0\}\bigg\} \\& < & \max \{(1 -| \mu|^2), 1\} =1= \|f\|.
		\end{eqnarray*} 
		Therefore, $f \not \perp_B g.$ This contradicts the fact  that $f$ is right symmetric, and completes the proof.
		\end{proof}

As an immediate consequence of Theorem \ref{right}, we obtain a necessary condition for a continuous function to be right symmetric in $\ell_{\infty}(K, \mathbb{X}),$ when $K$ is a connected Hausdorff space.

	\begin{cor}
		Let $K$ be a connected Hausdorff space and let $\mathbb{X}$ be a Banach space such that the set of all right symmetric points in $S_{\mathbb{X}}$ is finite. Suppose  $f \in S_{\ell_{\infty}(K, \mathbb{X})}$ is continuous and  right symmetric in $\ell_{\infty}(K, \mathbb{X}).$ Then 
	 there exists a right symmetric point $x_0 \in S_{\mathbb{X}}$ in $\mathbb{X}$ such that $f(k)=x_0,$ $\forall k \in K.$
		\end{cor}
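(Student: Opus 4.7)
The plan is to reduce the problem to a short topological argument after invoking Theorem \ref{right}. Since $f \in S_{\ell_\infty(K,\mathbb{X})}$ is right symmetric, Theorem \ref{right} immediately gives that $f(k) \in S_{\mathbb{X}}$ for every $k \in K$ and that each $f(k)$ is a right symmetric point of $\mathbb{X}$. Thus $f(K)$ is contained in the set $R$ of all right symmetric points of $S_{\mathbb{X}}$, which is finite by hypothesis. So the whole image $f(K)$ is a finite subset of $\mathbb{X}$.

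Next I would use the two remaining hypotheses: $f$ is continuous and $K$ is connected. Continuity sends connected sets to connected sets, so $f(K)$ is a connected subset of $\mathbb{X}$. Since $\mathbb{X}$, being a Banach space, is in particular a Hausdorff metric space, every singleton in $f(K)$ is closed, and the finiteness of $f(K)$ forces the induced subspace topology on $f(K)$ to be discrete. A connected discrete space has at most one point, and $f(K)$ is nonempty, so $f(K) = \{x_0\}$ for a unique $x_0 \in R \subset S_{\mathbb{X}}$. This $x_0$ is a right symmetric point of $\mathbb{X}$, and $f(k) = x_0$ for all $k \in K$, which is the desired conclusion.

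There is no serious obstacle here; the only care needed is to justify the topological step that a connected finite subset of a Hausdorff space must be a singleton, which follows from the observation above that finiteness plus Hausdorffness implies discreteness of the subspace topology. Everything else is a direct application of Theorem \ref{right}.
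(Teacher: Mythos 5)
Your argument is correct and is exactly the intended one: the paper states this corollary as an immediate consequence of Theorem \ref{right} without writing out a proof, and the analogous corollary following Theorem \ref{right:sufficient} uses precisely your reasoning (continuity plus connectedness of $K$ forces $f(K)$ to be a connected subset of the set of right symmetric points, which here is finite and hence discrete, so $f(K)$ is a singleton). Nothing is missing.
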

	
	
	Following the characterization of left symmetric points and the necessary condition for right symmetric points obtained in Theorem \ref{left} and Theorem \ref{right}, respectively, we conclude that there is no non-zero symmetric point in $\ell_{\infty}(K, \mathbb{X}),$ whenever $K$ is not a singleton set. 
	
	\begin{theorem}
		Let $K$ be any non-empty set such that $|K|>1$ and let  $\mathbb{X}$ be a Banach space. Then there is no non-zero symmetric element in $\ell_{\infty}(K, \mathbb{X}).$
	\end{theorem}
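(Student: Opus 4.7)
The plan is to combine Theorem \ref{left} and Theorem \ref{right} directly; the statement should follow with essentially no extra work. Suppose, toward a contradiction, that $f \in \ell_{\infty}(K, \mathbb{X})$ is a non-zero symmetric element. By rescaling we may assume $\|f\|=1$, so $f \in S_{\ell_{\infty}(K, \mathbb{X})}$.

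Since $f$ is left symmetric, Theorem \ref{left} gives a point $k_0 \in K$ with $f(k_0) \in S_{\mathbb{X}}$ and $f(k)=0$ for every $k \in K \setminus \{k_0\}$. On the other hand, since $f$ is also right symmetric, Theorem \ref{right}(i) forces $f(k) \in S_{\mathbb{X}}$ for every $k \in K$, i.e.\ $\|f(k)\|=1$ for all $k$. Because $|K|>1$, there exists $k_1 \in K$ with $k_1 \neq k_0$, and the two conditions simultaneously require $f(k_1)=0$ and $\|f(k_1)\|=1$, which is impossible. This contradiction establishes the theorem.

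There is no real obstacle here; the statement is a clean corollary of the preceding two structural results, whose incompatibility is immediate as soon as $K$ contains at least two distinct points. Accordingly, I would present the argument as a short paragraph invoking Theorems \ref{left} and \ref{right}, rather than as a multi-step proof.
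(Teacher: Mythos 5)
Your argument is exactly the one the paper intends: the theorem is stated immediately after the sentence noting that it follows by combining the characterization of left symmetric points (Theorem \ref{left}) with the necessary condition for right symmetric points (Theorem \ref{right}), and no separate proof is given. Your short paragraph correctly fills in the (trivial) contradiction between $f$ vanishing off a single point and $f(k)\in S_{\mathbb{X}}$ for all $k$, so it matches the paper's approach.
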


The question that naturally arises is whether the conditions mentioned in Theorem \ref{right}	are sufficient. In this connection we have the following result assuming the space $\mathbb{X}$  to be a finite-dimensional real Banach space and the set of all right symmetric points is closed.  To prove the result  we need  the following  characterization of Birkhoff-James orthogonality in $\ell_{\infty}(K, \mathbb{X})$, the proof of which is in the same line of \cite[Th. 2.8]{SPM}.
	
		\begin{theorem}\label{orthogonality:general}
		Let $\mathbb{X}$ be a Banach space and $K$ be  a non-empty set. Let $f,g \in \ell_{\infty}(K, \mathbb{X}).$ Then $g \perp_B f$ if and only if either $(i)$ or $(ii)$ holds: 
		\begin{itemize}
			\item[(i)] There exists a sequence $\{k_n\} \subset K$ such that $\|g(k_n)\| \to \|g\|$ and $f(k_n) \to 0,$ as $n \to \infty.$
			
			\item[(ii)]  there exists two sequence $\{k_n\}, \{t_n\} \subset K$ and $\{\epsilon_n\}, \{\delta_n\} \subset \mathbb{R}$ such that 
			\begin{itemize}
				\item[(a)] $\epsilon_n \to 0, \delta_n \to 0$
				\item[(b)] $\|g(k_n)\| \to \|g\|, \|g(t_n)\| \to \|g\|$
				\item[(c)] $f(k_n) \in g(k_n)^{+\epsilon_n}$ and $f(t_n) \in g(t_n)^{-\delta_n}.$
			\end{itemize}
		\end{itemize}
	\end{theorem}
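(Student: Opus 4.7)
The plan is to follow the strategy of \cite[Th.~2.8]{SPM}, replacing the operator norm with the supremum over $K$ and the norm attainment set with sequences of approximate maximizers in $K$.

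For the \emph{sufficient} direction, a pointwise lower bound does the work. Under (i), for any scalar $\lambda$,
\[
\|g+\lambda f\|\geq \|g(k_n)+\lambda f(k_n)\|\geq \|g(k_n)\|-|\lambda|\,\|f(k_n)\|\longrightarrow \|g\|,
\]
so $g\perp_B f$. Under (ii), one splits on the sign of $\lambda$: for $\lambda\geq 0$, the definition of $f(k_n)\in g(k_n)^{+\epsilon_n}$ gives
\[
\|g+\lambda f\|\geq \|g(k_n)+\lambda f(k_n)\|\geq \sqrt{1-\epsilon_n^2}\,\|g(k_n)\|\longrightarrow \|g\|;
\]
the $\lambda\leq 0$ case uses $\{t_n\}$ and $\{\delta_n\}$ identically.

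For the \emph{necessary} direction, assume $g\perp_B f$, so $\sup_{k\in K}\|g(k)+\mu f(k)\|\geq \|g\|$ for every scalar $\mu$. Pick a sequence $\mu_n\downarrow 0$ and, by definition of the supremum, select $k_n\in K$ with $\|g(k_n)+\mu_n f(k_n)\|\geq \|g\|-\mu_n^2$. The reverse triangle inequality forces $\|g(k_n)\|\to\|g\|$. One then extracts $\epsilon_n\to 0$ satisfying $f(k_n)\in g(k_n)^{+\epsilon_n}$ by a contradiction argument: if no such $\epsilon_n\to 0$ could be chosen, then by convexity of $\mu\mapsto \|g(k)+\mu f(k)\|$ and a uniform lower bound on the orthogonality defect along any near-maximizing sequence, one could produce a single $\mu>0$ for which $\|g+\mu f\|<\|g\|$, contradicting $g\perp_B f$. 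The analogous construction with $-\mu_n$ yields $\{t_n\}$ and $\{\delta_n\}$. The dichotomy between (i) and (ii) is then immediate: if some subsequence of $\{k_n\}$ has $f(k_n)\to 0$, case (i) holds; otherwise, $\{k_n\},\{t_n\},\{\epsilon_n\},\{\delta_n\}$ are precisely the data required for (ii).

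The main obstacle lies in upgrading the single-scale near-maximizer bound $\|g(k_n)+\mu_n f(k_n)\|\geq \|g\|-\mu_n^2$ to the uniform approximate-orthogonality bound required for $f(k_n)\in g(k_n)^{+\epsilon_n}$, which must hold for \emph{every} $\mu\geq 0$. This step relies crucially on the convexity of $\mu\mapsto \|g(k_n)+\mu f(k_n)\|$ and on exploiting that $\|g+\mu f\|\geq \|g\|$ holds at \emph{every} scale, not just at $\mu_n$. Once this quantitative passage is set up, the remaining subsequence extraction and the (i)/(ii) dichotomy are routine.
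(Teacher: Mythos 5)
Your overall strategy is the one the paper intends: the paper gives no proof of this theorem at all, saying only that it is ``in the same line of'' \cite[Th.~2.8]{SPM}, and your plan is precisely the adaptation of that argument from $\mathbb{L}(\mathbb{X},\mathbb{Y})$ to $\ell_\infty(K,\mathbb{X})$, with sequences of approximate maximizers in $K$ replacing the norm attainment set. The sufficiency direction is complete and correct, and in the necessity direction the selection of $k_n$ with $\|g(k_n)+\mu_n f(k_n)\|\ge\|g\|-\mu_n^2$ and the verification that $\|g(k_n)\|\to\|g\|$ are fine. You also correctly identify the crux (upgrading the single-scale bound at $\mu_n$ to a bound uniform in $\lambda\ge 0$) and the right tool (convexity).

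However, the one concrete mechanism you offer for that crux cannot work as stated: a failure of approximate orthogonality at the points $k_n$ can never ``produce a single $\mu>0$ for which $\|g+\mu f\|<\|g\|$,'' because $\|g+\mu f\|$ is a supremum over all of $K$ and is unaffected by what happens at the $k_n$ alone. The correct implementation is direct, not by contradiction. Set $\varphi_n(\lambda)=\|g(k_n)+\lambda f(k_n)\|$; your choice of $k_n$ gives the pointwise inequality $\varphi_n(\mu_n)\ge\|g\|-\mu_n^2\ge\varphi_n(0)-\mu_n^2$. Assume $f(k_n)\ne 0$ and let $\lambda_n^*\ge 0$ minimize the convex, coercive function $\varphi_n$ on $[0,\infty)$. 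If $\lambda_n^*\le\mu_n$, then $\varphi_n(0)-\inf_{\lambda\ge0}\varphi_n(\lambda)\le\lambda_n^*\|f(k_n)\|\le\mu_n\|f\|$. If $\lambda_n^*>\mu_n$, convexity on $[0,\lambda_n^*]$ gives $\varphi_n(\mu_n)\le\varphi_n(0)-\frac{\mu_n}{\lambda_n^*}\bigl(\varphi_n(0)-\varphi_n(\lambda_n^*)\bigr)$, whence $\varphi_n(0)-\inf_{\lambda\ge 0}\varphi_n(\lambda)\le\mu_n\lambda_n^*$, while $\lambda_n^*\|f(k_n)\|-\|g\|\le\varphi_n(\lambda_n^*)\le\|g\|$ forces $\lambda_n^*\le 2\|g\|/\|f(k_n)\|$. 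Hence, after passing to a subsequence, either $\|f(k_n)\|\to 0$ (which is exactly case (i)) or $\inf_n\|f(k_n)\|>0$ and the defect $\varphi_n(0)-\inf_{\lambda\ge0}\varphi_n(\lambda)$ is $O(\mu_n)$, so that $\epsilon_n\to 0$ with $f(k_n)\in g(k_n)^{+\epsilon_n}$ can be extracted since $\varphi_n(0)\to\|g\|>0$; the same computation with $-\mu_n$ yields $\{t_n\}$ and $\{\delta_n\}$. With this replacement your proof closes and is the argument the citation to \cite{SPM} is pointing at.
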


	
	We also need the following lemma.
	
	\begin{lemma}\cite[Th. 2.2]{SRBB}\label{lemma:right}
		Let $\mathbb{X}$ be a real Banach space. Then $x \in S_{\mathbb{X}}$ is  right symmetric if and only if given any $u \in \mathbb{X},$ the following two conditions hold true:
		\begin{itemize}
			\item[(i)] $x \in u^-$ implies that $u \in x^-.$
			\item[(ii)] $x \in u^+$ implies that $u \in x^+.$
		\end{itemize} 
	\end{lemma}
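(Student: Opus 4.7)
The plan is to prove the two implications separately, with the ``only if'' direction being the substantial one that requires an auxiliary construction.

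For the ``if'' direction, suppose (i) and (ii) hold for every $u \in \mathbb{X}$. Let $u \in \mathbb{X}$ with $u \perp_B x$. By the definition of Birkhoff-James orthogonality, $x \in u^+ \cap u^-$. Applying (ii) then (i) gives $u \in x^+$ and $u \in x^-$, so $\|x + \lambda u\| \geq \|x\|$ for every $\lambda \in \mathbb{R}$, i.e., $x \perp_B u$. Hence $x$ is right symmetric.

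For the ``only if'' direction it suffices to prove (i); part (ii) follows from (i) applied to $-u$, using the equivalences $x \in u^+ \Leftrightarrow x \in (-u)^-$ and $-u \in x^- \Leftrightarrow u \in x^+$. So assume $x$ is right symmetric and $x \in u^-$; the goal is $u \in x^-$. The key idea is to reduce to a Birkhoff-James orthogonality statement to which right-symmetry can be applied. Let $\beta \in \mathbb{R}$ be a minimizer of the convex coercive function $\mu \mapsto \|u + \mu x\|$ on $\mathbb{R}$ (such a minimizer exists since the function tends to $+\infty$ as $|\mu|\to\infty$). The hypothesis $x \in u^-$ forces $\|u + \mu x\| \geq \|u\|$ for every $\mu \leq 0$, so the minimum cannot lie strictly below $\|u\|$ on $(-\infty, 0]$, and we may take $\beta \geq 0$. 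Setting $y := u + \beta x$, the minimality of $\beta$ immediately gives $\|y + \lambda x\| = \|u + (\beta+\lambda)x\| \geq \|u + \beta x\| = \|y\|$ for every $\lambda \in \mathbb{R}$, i.e., $y \perp_B x$.

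Right-symmetry of $x$ now yields $x \perp_B y$. Invoking Theorem \ref{J}, there is a functional $f \in J(x)$ with $f(y) = 0$; since $f(x) = 1$ this rearranges to $f(u) = -\beta \leq 0$. By Lemma \ref{functional} (contrapositively: if $u$ were in $x^+ \setminus x^-$, every $f \in J(x)$ would have $f(u) > 0$), we conclude $u \in x^-$, completing (i).

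The main obstacle I anticipate is the choice of a suitable auxiliary element $y$ that simultaneously activates the right-symmetry hypothesis and yields quantitative information about $u$; the variational recipe $y = u + \beta x$ accomplishes both through a single minimization. A minor bookkeeping point is the degenerate case $u \in \mathrm{span}\{x\}$, where the minimizer may produce $y = 0$; but Theorem \ref{J} still applies, any $f \in J(x)$ automatically satisfies $f(u) = -\beta \leq 0$, and the argument goes through uniformly.
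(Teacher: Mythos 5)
The paper does not prove this lemma at all --- it is imported verbatim as \cite[Th.~2.2]{SRBB} --- so there is no internal proof to compare against; your argument has to stand on its own, and it does. The ``if'' direction and the reduction of (ii) to (i) via $u \mapsto -u$ are both routine and correctly executed. The core of the ``only if'' direction is the classical variational construction: minimizing $\mu \mapsto \|u + \mu x\|$ produces $\beta$ with $u + \beta x \perp_B x$ (existence of a minimizer is guaranteed by coercivity, since $\|x\|=1$), and the hypothesis $x \in u^-$ correctly forces a minimizer $\beta \ge 0$ because the function is $\ge \|u\|$ on $(-\infty,0]$ while its global minimum is $\le \|u\|$. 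Right symmetry then hands you $x \perp_B (u+\beta x)$, James's characterization (Theorem~\ref{J}, normalized so that $f \in J(x)$) gives $f(u) = -\beta \le 0$, and the dichotomy ``$u \in x^+$ or $u \in x^-$'' together with Lemma~\ref{functional}(i) in contrapositive form yields $u \in x^-$. Your handling of the degenerate case $y=0$ is also fine. The one cosmetic point: Theorem~\ref{J} as stated in the paper does not normalize $f$, so you should say explicitly that the functional it produces is nonzero and may be rescaled to lie in $J(x)$; with that, the proof is complete and self-contained, which is arguably more than the paper itself offers for this statement.
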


	\begin{theorem}
		Let $K$ be a non-empty set and let $\mathbb{X}$ be a finite-dimensional real Banach space such that the set of all right symmetric points of $S_{\mathbb{X}}$ is closed. Then  $f \in S_{\ell_{\infty}(K, \mathbb{X})}$ is right symmetric if  the following conditions hold: 
		\begin{itemize}
			\item[(i)] $f(k) \in S_{\mathbb{X}},$  $\forall k \in K.$
			\item[(ii)] 	$f(k)$ is right symmetric,  $\forall k \in K.$
		\end{itemize} 
	\end{theorem}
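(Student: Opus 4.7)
The plan is to take an arbitrary $g \in \ell_{\infty}(K,\mathbb{X})$ satisfying $g \perp_B f$ and deduce $f \perp_B g$. We may assume $\|g\|>0$, since the case $g=0$ is trivial. Applying Theorem \ref{orthogonality:general} to the relation $g \perp_B f$, alternative (i) is immediately excluded because $\|f(k)\|=1$ for every $k \in K$; hence alternative (ii) must hold, supplying sequences $\{k_n\},\{t_n\} \subset K$ and null sequences $\epsilon_n,\delta_n \to 0$ with $\|g(k_n)\|,\|g(t_n)\| \to \|g\|$, $f(k_n) \in g(k_n)^{+\epsilon_n}$, and $f(t_n) \in g(t_n)^{-\delta_n}$.

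Next, exploiting the finite-dimensionality of $\mathbb{X}$ (which makes $S_\mathbb{X}$ compact and $\{g(k_n)\},\{g(t_n)\}$ relatively compact), I would pass to convergent subsequences so that $f(k_n) \to x$, $g(k_n) \to y$, $f(t_n) \to x'$, $g(t_n) \to y'$, with $\|x\|=\|x'\|=1$ and $\|y\|=\|y'\|=\|g\|$. Letting $\epsilon_n,\delta_n \to 0$ in the approximate relations, we obtain the genuine relations $x \in y^+$ and $x' \in (y')^-$. Since $f(k_n),f(t_n)$ are right symmetric by hypothesis and the set of right symmetric points of $S_\mathbb{X}$ is closed, the limits $x$ and $x'$ are themselves right symmetric. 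Lemma \ref{lemma:right} then flips the orthogonalities: $y \in x^+$ and $y' \in (x')^-$.

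The crucial step is now to transfer these true orthogonality relations at the limit back into approximate orthogonality relations at each finite level along $\{k_n\}$ and $\{t_n\}$. Set $m_n := \inf_{\lambda \geq 0}\|f(k_n)+\lambda g(k_n)\|$. Since $\lambda \mapsto \|f(k_n)+\lambda g(k_n)\|$ is convex and bounded below by $\lambda\|g(k_n)\|-1$, its infimum on $[0,\infty)$ is attained at some $\lambda_n$ with $0 \leq \lambda_n \leq 2/\|g(k_n)\|$, a uniformly bounded quantity. A subsequence argument (any convergent sub-subsequence of $(f(k_n),g(k_n),\lambda_n)$ produces a limit $\|x+\lambda^* y\| \geq 1$ via $y \in x^+$, while the trivial bound $m_n \leq 1$ holds by taking $\lambda=0$) then forces $m_n \to 1$. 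Setting $\epsilon_n' := \sqrt{1-m_n^2}$ for sufficiently large $n$ yields $g(k_n) \in f(k_n)^{+\epsilon_n'}$ with $\epsilon_n' \to 0$. An identical argument on $\{t_n\}$ produces $\delta_n' \to 0$ with $g(t_n) \in f(t_n)^{-\delta_n'}$. Since $\|f(k_n)\|=\|f(t_n)\|=\|f\|=1$ holds trivially, Theorem \ref{orthogonality:general} applied to the pair $(f,g)$ now delivers $f \perp_B g$, as required.

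The main obstacle is precisely this final step: promoting the limiting orthogonality $y \in x^+$ back to approximate orthogonality of $g(k_n)$ with $f(k_n)$ at every $n$. It is the convexity of the norm as a function of $\lambda$, together with the uniform bound on the minimizers coming from $\|g\|>0$ and finite-dimensionality, that makes this transfer possible. The closedness hypothesis on the right symmetric points of $S_\mathbb{X}$ is used only once, namely to ensure that the limits $x,x'$ inherit right symmetry and thus Lemma \ref{lemma:right} applies to reverse the orthogonality relations.
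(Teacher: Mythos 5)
Your argument is correct, and it coincides with the paper's proof up to and including the key structural steps: ruling out alternative (i) of Theorem \ref{orthogonality:general} because $\|f(k)\|=1$ everywhere, extracting convergent subsequences by finite-dimensionality, passing to the limit to get $z_1\in w_1^+$ and $z_2\in w_2^-$ (your $x\in y^+$, $x'\in(y')^-$), invoking closedness of the set of right symmetric points, and flipping the relations with Lemma \ref{lemma:right}. Where you genuinely diverge is in the concluding step. The paper finishes by choosing support functionals $y_n^*\in J(f(k_n))$, passing to a limit functional $y^*\in J(z_1)$, observing $y^*(w_1)\ge 0$ (and the analogous $\le 0$ on the other sequence), and then applying the convex-hull criterion of Theorem \ref{theorem:martin}. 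You instead promote the exact limiting relations back to approximate relations at each finite stage: setting $m_n=\inf_{\lambda\ge 0}\|f(k_n)+\lambda g(k_n)\|$, using the growth bound $\|f(k_n)+\lambda g(k_n)\|\ge\lambda\|g(k_n)\|-1$ to confine the minimizers to a compact interval, and deducing $m_n\to 1$ by a subsequence argument, so that $g(k_n)\in f(k_n)^{+\epsilon_n'}$ with $\epsilon_n'=\sqrt{1-m_n^2}\to 0$; you then apply Theorem \ref{orthogonality:general} in the reverse direction. Both routes are valid. The paper's functional-analytic finish is shorter and avoids the minimizer-boundedness argument entirely; your finish has the modest advantage of staying wholly inside the $x^{\pm\epsilon}$ framework, so it never needs the convex-hull characterization of Theorem \ref{theorem:martin}, only the sequential one. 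One small point of care that you do handle correctly: $\epsilon_n'$ must lie in $[0,1)$, which requires $m_n>0$, available only for large $n$; since Theorem \ref{orthogonality:general} only asks for the existence of suitable sequences, discarding finitely many terms is harmless.
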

	
	\begin{proof}
		Let  $g \in S_{\ell_{\infty}(K, \mathbb{X})}$ such that $g \perp_B f.$ Since $f(k) \in S_{\mathbb{X}}, \forall k \in K,$ from Theorem \ref{orthogonality:general}, there exist two sequences $\{k_n\}, \{t_n\} \subset K$ and $\{\epsilon_n\}, \{\delta_n\} \subset \mathbb{R}$ such that 
		\begin{itemize}
			\item[(a)] $\epsilon_n \to 0, \delta_n \to 0$
			\item[(b)] $\|g(k_n)\| \to \|g\|, \|g(t_n)\| \to \|g\|$
			\item[(c)] $f(k_n) \in g(k_n)^{+\epsilon_n}$ and $f(t_n) \in g(t_n)^{-\delta_n}.$
		\end{itemize}
		Since $\mathbb{X}$ is finite-dimensional, without loss of generality we assume that $f(k_n) \to z_1, f(t_n) \to z_2, g(k_n) \to w_1, g(t_n ) \to w_2.$ As $f$ satisfies the condition (i), $z_1, z_2 \in S_{\mathbb{X}}.$ As  $f(k_n) \in g(k_n)^{+\epsilon_n}$, we get 
		$$\| g(k_n) + \lambda f(k_n)\| \geq \sqrt{1-\epsilon_n^2} \|g(k_n)\|, \forall \lambda \geq 0. $$
		Taking limit on both sides we get $\|w_1 + \lambda z_1\| \geq \|w_1\|, \forall \lambda \geq 0.$ In other words, $z_1 \in w_1^+.$ Similarly, from $f(t_n) \in g(t_n)^{-\delta_n},$ we obtain $z_2 \in w_2^-.$ Since the set of all right symmetric points of $S_{\mathbb{X}}$ is closed and $f(k_n), f(t_n)$ are right symmetric, $z_1, z_2$ are also right symmetric. Following Lemma \ref{lemma:right} we get, $w_1 \in z_1^+$ and $w_2 \in z_2^-.$
		
		Now, take $y_n^* \in J(f(k_n)).$ As $\mathbb{X}$ is finite-dimensional, as before without loss of generality we assume $y_n^* \to y^* \in S_{\mathbb{X}^*}.$ As $f(k_n) \to z_1,$ it is straightforward to see that $y^*(z_1)=1.$ In other words, $y^* \in J(z_1). $ As $w_1 \in z_1^+,$ it follows easily that
		$y^*(w_1) \geq 0$ (see \cite[Th. 2.4]{SMP}).
		   So, $\lim y_n^*(g(k_n))= y^*(w_1) \geq 0.$ Next, we consider $z_n^* \in J(f(t_n)).$ Following similar arguments and using \cite[Th. 2.4]{SMP} we can show that $\lim z_n^*(g(t_n)) \leq  0. $
		Therefore, \[
		0 \in co(\{ \lim y_n^*(g(k_n)): k_n \in K, y_n^* \in S_{\mathbb{X}^*} \forall n \in \mathbb{N}, \lim y_n^* (f(k_n))=1\}).	
		\]
		Using Theorem \ref{theorem:martin}, $f \perp_B g.$ This proves that $f$ is right symmetric.
	\end{proof}
	
	\begin{remark}
		It is worth mentioning here that the set of  right symmetric points is closed in  finite-dimensional real polyhedral Banach spaces. In fact, we are yet to get an example of  a finite-dimensional real Banach space where the set of right symmetric points is not closed.
	\end{remark}

	We next characterize the right symmetric functions in $C(K, \mathbb{X}),$ where $\mathbb{X}$ is a real Banach space.

	\begin{theorem}\label{right:sufficient}
		Let $K$ be a compact Hausdorff space and let $\mathbb{X}$ be a real Banach space. Then $f \in S_{C(K, \mathbb{X})}$ is right symmetric if and only if $f$ satisfies the following conditions:
		\begin{itemize}
			\item[(i)] $f(k) \in S_{\mathbb{X}},$ $ \forall k \in K.$
			\item[(ii)] 	$f(k)$ is right symmetric, $ \forall k \in K.$
		\end{itemize} 
	\end{theorem}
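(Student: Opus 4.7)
I plan to prove both directions, with the necessary direction being the more delicate one. The sufficient direction is a clean application of Corollary \ref{ortho:continuous} together with Lemma \ref{lemma:right}: assuming (i) and (ii), let $g \in C(K,\mathbb{X})$ satisfy $g \perp_B f$. By Corollary \ref{ortho:continuous} there exist $k_1, k_2 \in M_g$ such that $f(k_1) \in g(k_1)^+$ and $f(k_2) \in g(k_2)^-$. Condition (i) gives $M_f = K$, so $k_1, k_2 \in M_f$; condition (ii) says $f(k_1)$ and $f(k_2)$ are right symmetric in $\mathbb{X}$, and hence Lemma \ref{lemma:right} yields $g(k_1) \in f(k_1)^+$ and $g(k_2) \in f(k_2)^-$. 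Re-applying Corollary \ref{ortho:continuous}, we conclude $f \perp_B g$, proving $f$ is right symmetric.

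For the necessary direction I would argue by contrapositive, in two steps. Suppose (i) fails at some $k_0$ with $\|f(k_0)\| < 1$. I pick $w_0 \in S_{\mathbb{X}}$ with $w_0 \perp_B f(k_0)$, which is possible when $f(k_0) = 0$ or $\dim \mathbb{X} \geq 2$; the one-dimensional case with $f(k_0) \neq 0$ will need a separate sign-based choice, $w_0 = -\mathrm{sign}(f(k_0))$, together with the fact that $M_f \neq \emptyset$ contains a point where $f$ equals $\mathrm{sign}(f(k_0))$ or its negative. Since $M_f$ is closed and $k_0 \notin M_f$, the compact Hausdorff (hence normal) space $K$ together with Urysohn's lemma produces a continuous $h \colon K \to [0,1]$ with $h(k_0) = 1$ and $h \equiv 0$ on $M_f$. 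Setting $g(k) = (1-h(k)) f(k) + h(k) w_0$, we get $\|g\| = 1$, $k_0 \in M_g$, and $g \equiv f$ on $M_f$. The relation $g \perp_B f$ follows from $f(k_0) \in w_0^+ \cap w_0^-$ via $k_1 = k_2 = k_0$, while on $M_f$ we have $g(k) = f(k) \in f(k)^+ \setminus f(k)^-$ (since $x \notin x^-$ for $x \neq 0$), so no witness for the negative side can be found in $M_f$, forcing $f \not\perp_B g$ and contradicting right symmetry.

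Suppose instead that (i) holds but (ii) fails at $k_0$. Pick $u \in S_{\mathbb{X}}$ with $u \perp_B f(k_0)$ and $f(k_0) \not\perp_B u$; replacing $u$ by $-u$ if necessary (using $y \in x^+ \iff -y \in x^-$), I may assume $u \notin f(k_0)^-$. By Lemma \ref{functional} this means $c := \min\{\phi(u) : \phi \in J(f(k_0))\} > 0$, the minimum being attained by weak*-compactness of $J(f(k_0))$. A standard upper semi-continuity argument for the duality map then yields an open neighborhood $V$ of $k_0$ such that $\phi(u) > c/2$ for every $k \in V$ and every $\phi \in J(f(k))$: if this failed, extract $k_n \to k_0$ and $\phi_n \in J(f(k_n))$ with $\phi_n(u) \leq c/2$, pass to a weak*-cluster point $\phi^*$, and verify $\phi^*(f(k_0)) = \lim \phi_n(f(k_n)) = 1$, so $\phi^* \in J(f(k_0))$ while $\phi^*(u) \leq c/2 < c$, a contradiction. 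Use Urysohn to produce continuous $h \colon K \to [0,1]$ with $h(k_0) = 1$ and $\mathrm{supp}(h) \subset V$, and set $g(k) = (1-h(k)) f(k) + h(k) u$. Then $k_1 = k_2 = k_0$ again witnesses $g \perp_B f$, and for every $k \in K$ and every $\phi \in J(f(k))$ one computes
\[
\phi(g(k)) = (1-h(k))\phi(f(k)) + h(k)\phi(u) = (1-h(k)) + h(k)\phi(u) > 0,
\]
so by Lemma \ref{functional} we have $g(k) \in f(k)^+ \setminus f(k)^-$ for every $k$. No witness for the negative side exists, so $f \not\perp_B g$, completing the contradiction.

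The technical heart of the argument is the upper semi-continuity step in (ii), which converts the pointwise positivity $\phi(u) > 0$ on $J(f(k_0))$ into a uniform positivity $\phi(u) > c/2$ for $k$ close to $k_0$; everything else is a clean application of Corollary \ref{ortho:continuous}, Lemma \ref{lemma:right}, Lemma \ref{functional}, or a routine Urysohn construction. The one-dimensional subcase of (i) is the other place that requires care, because the orthogonality $w_0 \perp_B f(k_0)$ is unavailable in $\mathbb{R}$, and one has to exploit the sign of $f$ on $M_f$ instead.
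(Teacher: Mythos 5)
Your proposal is correct and follows the same overall strategy as the paper: the sufficiency argument is identical (Corollary \ref{ortho:continuous} plus Lemma \ref{lemma:right}), and the necessity argument uses the same kind of Urysohn-bump perturbation $g=(1-h)f+hw_0$. The differences are localized. In step (i) the paper glues two Urysohn functions with disjoint supports, $g=h_1w_0+h_2f$, rather than your single convex combination; both work, and you correctly observe the point the paper glosses over, namely that when $\dim\mathbb{X}=1$ and $f(k_0)\neq 0$ no unit vector $w_0$ with $w_0\perp_B f(k_0)$ exists, so a sign-based witness (negative side at $k_0$, positive side at any point of $M_f$ where $g=f$) is needed instead. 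In step (ii) the genuine divergence is how you get $g(k)\in f(k)^+\setminus f(k)^-$ on a neighborhood of $k_0$: you prove upper semicontinuity of the duality map via weak*-compactness of $B_{\mathbb{X}^*}$ to obtain the uniform bound $\phi(u)>c/2$, whereas the paper simply notes that $\zeta(x)=\|x+\lambda_0w_0\|-\|x\|$ is continuous and negative at $f(k_0)$, pulls back the resulting open set $V\subset\mathbb{X}$ through $f$, and invokes Lemma \ref{functional} pointwise; the paper's route is more elementary and avoids any compactness of $J(\cdot)$. One small repair to your version: since $K$ is only compact Hausdorff (not necessarily first countable), the extraction ``$k_n\to k_0$'' in your semicontinuity argument should be carried out with nets and weak*-convergent subnets rather than sequences; with that adjustment the argument is sound.
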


	\begin{proof}
		We first prove the sufficient part. 
		Let $g \perp_B f.$ From Corollary \ref{ortho:continuous}, there exist $k_1, k_2 \in M_g$ such that $f(k_1) \in g(k_1)^+$ and $f(k_2) \in g(k_2)^-.$ As $f(k_1), f(k_2)$ both are right symmetric points, applying Lemma \ref{lemma:right}, we obtain that $g(k_1) \in f(k_1)^+$ and $g(k_2) \in f(k_2)^-.$ Observe that $k_1, k_2 \in M_f.$ Using Corollary \ref{ortho:continuous}, we get that $f \perp_B g.$ Therefore, $f$ is right symmetric.
		
		Let us now prove the necessary part. First we  show that $f(k) \in S_{\mathbb{X}},$ $ \forall k \in K.$	 Suppose on the contrary that there exists $k_0 \in K$ such that $\|f(k_0)\| <1.$ Take $w_0 \in S_{\mathbb{X}}$ such that $w_0 \perp_B f(k_0).$ From Proposition \ref{compact}, it follows that  $M_f$ is a compact set. Since $K$ is a compact Hausdorff space, there exist open sets $U$ and $V$ such that $U \cap V= \emptyset, k_0 \in U$ and $M_f \subset V.$  Now $\{k_0\} $ and $K \setminus U $ are two disjoint closed sets and so by the Urysohn's lemma \cite{Munkres}, there exists a continuous function $h_1 : K \to [0,1] $ such that $h_1(k_0)=1$ and $h_1(k)=0,$  $\forall k \notin U.$ Similarly, there exists a continuous function $h_2: K \to [0,1]$ such that  $h_2(k)=1,$  $\forall k \in M_f$ and $h_2(k)=0,$  $ \forall k \notin V.$ 
		Define $ g: K \to \mathbb{X}$ as 
		$$ g(k)=h_1(k)w_0+ h_2(k)f(k), ~ \forall k \in K.$$ Clearly, $g $ is continuous. Note that  $g(k)= h_1(k) w_0, \, \forall k \in U, $ $g(k)= h_2(k) f(k), \, \forall k \in V$  and  $g(k)= 0, \, \forall k \in K \setminus (U \cup V),$ which shows that $\|g\|=1.$ Also, $g(k_0)=w_0$ and $g(k)=f(k), \, \forall k \in M_f. $ As $w_0 \perp_B f(k_0),$ we observe that for any scalar $\lambda,$ 
		$$\|g + \lambda f\| \geq \|g(k_0)+ \lambda f(k_0)\|= \|w_0+ \lambda f(k_0)\| \geq \|w_0\|=\|g(k_0)\|=\|g\|.$$  
		So, $g \perp_B f.$ For any $k \in M_f, \, g(k)= f(k)$  and so for any $k \in K,y^* \in Ext(B_{\mathbb{X}^*})$ with  $ y^*(f(k))= \|f\|=1,$ we get  $y^*(g(k))= y^*(f(k))=\|f\|=1.$ So, 
		\[
		0 \notin co(\{y^*(g(k)): k \in K, y^* \in Ext(B_{X^*}), y^*(f(k))=1\}).
		\]
		Therefore, using Corollary \ref{martin:continuous}, we conclude that $f \not\perp_B g.$ This contradicts the fact that $f$ is right symmetric. 
		
		To complete the proof, we only need to show that $f$ satisfies the condition (ii). Suppose on the contrary  that there exists $k_0 \in K$ such that $f(k_0)$ is not right symmetric. Therefore, there exists $w_0 \in S_{\mathbb{X}}$ such that $w_0 \perp_B f(k_0)$ but $f(k_0) \not\perp_B w_0.$ Without loss of generality we assume that $w_0 \in f(k_0)^+.$ So, $w_0 \notin f(k_0)^-.$ Then there exists a scalar $\lambda_0 <0$ such that  $\|f(k_0) + \lambda_0 w_0\| < \|f(k_0)\|.$ Let us now define a function $\zeta: \mathbb{X} \to \mathbb{R}$ such that for any $x \in \mathbb{X},$ $\zeta(x)= \|x+ \lambda_0 w_0\|-\|x\|.$ Clearly, $\zeta$ is continuous and $\zeta(f(k_0))<0.$ Therefore, there exists an open set $V \subset \mathbb{X}$ containing $f(k_0)$ such that for any $v \in V,$ $\zeta(v) <0.$ So, for any $v \in V,$ $\|v+ \lambda_0 w_0\|< \|v\|,$ which implies that $w_0 \notin v^-.$ Thus, $w_0 \in v^+$ and $w_0 \notin v^-,$ for any $v \in V.$ By the continuity of $f$ at $k_0,$ there exists  an open set $U$ of $K$ containing $k_0$ such that $f(U) \subset V.$ As before, using the Urysohn's Lemma, there exists a continuous map $h: K \to [0,1]$ such that $h(k_0)=1$ and $h(K\setminus U)=0.$ Define $g: K \to \mathbb{X}$ as 
		$$ g(k)= (1-h(k)) f(k)+ h(k)w_0, ~\forall k \in K.$$
		Clearly, $g$ is continuous and moreover,  $\|g(k)\|= \|f(k)\|=1, \, \forall k \in K \setminus U$ and   $\|g(k)\| \leq |1-h(k)| \|f(k)\|+|h(k)|\|w_0\| \leq 1, \, \, \forall k \in U.$ So, $\|g\|=1.$ Since $g(k_0)=w_0,$ we observe that for any scalar $\lambda \in \mathbb{R},$
		\[
		\|g+ \lambda f\|\geq \|g(k_0)+\lambda f(k_0)\|=\|w_0+\lambda f(k_0)\|\geq \|w_0\|=1=\|g\|.
		\]
		So, $g \perp_B f.$ For any $k \in K\setminus U$ and $y^* \in J(f(k)),$ $y^*(g(k))= y^*(f(k)) =1.$ For any  $k \in U,$ $g(k) \in f(k)^+$ and $g(k) \notin f(k)^-.$ This implies that for any $k \in U$ and for any $y^*\in Ext(B_{\mathbb{X}^*})$ with $y^*(f(k))=\|f\|,$ we have $y^*(g(k)) >0$ (by Lemma \ref{functional}). Therefore, 
		$$ 0 \notin co(\{y^*(g(k)): k \in K, y^* \in Ext(B_{\mathbb{X}^*}), y^*(f(k))=\|f\|\}).$$ 
		So, using Corollary \ref{martin:continuous}, we get  $f \not\perp_B g.$ This contradicts the fact that $f$ is  right symmetric, and establishes the theorem.
	\end{proof}

		\begin{cor}
			Let $K$ be a compact Hausdorff space and let $\mathbb{X}$ be a real  Banach space such that the set of all right symmetric points in $S_{\mathbb{X}}$ is not connected. Then $K$ is connected if and only if the following are equivalent:
			\begin{itemize}
					\item[(i)] $f$ is right symmetric  in $S_{C(K, \mathbb{X})}$.
						\item[(ii)] there exists a right symmetric point $x_0 \in S_{\mathbb{X}}$ in $\mathbb{X}$ such that $f(k)=x_0,$ $\forall k \in K.$
					\end{itemize}   
				\end{cor}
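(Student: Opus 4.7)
The result is a compound biconditional, so my plan is to establish each direction of the outer ``if and only if'' separately. The forward direction assumes $K$ is connected and derives the equivalence of (i) and (ii); the converse, assuming the equivalence holds for every $f$, yields the connectedness of $K$ via a contrapositive construction.

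To prove that ``(i) $\Leftrightarrow$ (ii) for all $f$'' implies $K$ is connected, I argue contrapositively. If $K$ is disconnected, write $K = K_1 \sqcup K_2$ with $K_1, K_2$ nonempty and clopen. Since the set of right symmetric points of $S_{\mathbb{X}}$ is not connected, it contains at least two distinct points $x_1, x_2$. Setting $f \equiv x_1$ on $K_1$ and $f \equiv x_2$ on $K_2$ produces a continuous norm-one function whose pointwise values are all right symmetric and of unit norm. By Theorem \ref{right:sufficient}, $f$ satisfies (i); however, $f$ is not constant, so (ii) fails. This breaks the equivalence, completing this half.

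Conversely, suppose $K$ is connected. The implication (ii) $\Rightarrow$ (i) is immediate from Theorem \ref{right:sufficient}, since a constant function taking a right symmetric value of unit norm trivially satisfies the pointwise hypotheses of that theorem. For (i) $\Rightarrow$ (ii), let $f \in S_{C(K,\mathbb{X})}$ be right symmetric. By Theorem \ref{right:sufficient}, $f(K)$ is contained in the set $R$ of right symmetric points of $S_{\mathbb{X}}$. Continuity of $f$ together with connectedness of $K$ forces $f(K)$ to be a connected subset of $R$, and since $R$ itself is not connected, $f(K)$ lies entirely inside one of the connected components of $R$.

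The key step, and the main obstacle I anticipate, is then concluding that $f(K)$ reduces to a singleton. The hypothesis that $R$ is disconnected does not by itself force each connected component of $R$ to be a point; however, in the finite-dimensional polyhedral spaces highlighted in the remark preceding this corollary the right symmetric set is finite, hence totally disconnected, so the connected image $f(K)$ collapses to a single right symmetric element $x_0 \in S_{\mathbb{X}}$. This yields condition (ii) and closes the argument.
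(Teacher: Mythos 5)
Your overall decomposition is exactly the paper's: the reverse implication is proved contrapositively by building a non-constant right symmetric function on a disconnected $K$ (the paper takes $f\equiv x$ on one clopen piece and $f\equiv -x$ on the other, using that $-x$ is right symmetric whenever $x$ is; your choice of two distinct right symmetric points $x_1\neq x_2$ is equally valid), and (ii)$\Rightarrow$(i) is immediate from Theorem \ref{right:sufficient} in both treatments. For (i)$\Rightarrow$(ii) you also follow the paper up to the point where $f(K)$ is a connected subset of the set $R$ of right symmetric unit vectors.

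The step you flag as the obstacle is genuinely the crux, and you are right to be suspicious of it: from ``$f(K)$ is connected'' and ``$R$ is not connected'' one may only conclude that $f(K)$ lies in a single connected component of $R$, not that it is a singleton, since a disconnected set can have non-degenerate components. The paper's own proof simply asserts ``since the set of all right symmetric points in $S_{\mathbb{X}}$ is not connected, we conclude that $f(K)$ is singleton,'' i.e.\ it makes precisely the inference you decline to make, without further justification. The argument (yours and the paper's) is complete under the stronger hypothesis that $R$ is \emph{totally} disconnected --- in particular when $R$ is finite, as in the polyhedral examples mentioned in the surrounding remarks --- but your retreat to that case proves a weaker statement than the corollary as written. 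So the gap you identify is real; it is a gap in the stated corollary's proof rather than a defect peculiar to your approach, and closing it would require either strengthening the hypothesis to total disconnectedness of $R$ or showing that a disconnected set of right symmetric points can never contain a non-trivial connected subset, which neither you nor the paper establishes.
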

				
				\begin{proof}
					Let $K$ be connected. If $f$ is right symmetric in $C(K, \mathbb{X})$ then it follows from Theorem \ref{right:sufficient} that for any $k \in K,$ $f(k) \in S_{\mathbb{X}}$ and $f(k)$ is right symmetric in $\mathbb{X}.$ As $K$ is connected and $f$ is continuous, $f(K) \subset S_{\mathbb{X}}$ is also connected. Since  the set of all right symmetric points in $S_{\mathbb{X}}$ is not connected,  we conclude that $f(K)$ is singleton and $f$ satisfies the condition (ii). On the other hand, if $f$ satisfies the condition (ii), then from Theorem \ref{right:sufficient} we infer that $f$ is right symmetric. 
					
					For the converse part, suppose on the contrary that $K$ is not connected. Then there exists two non-empty disjoint open sets $U, V$ such that $K = U \cup V.$ Let $x \in S_{\mathbb{X}}$ be right symmetric. Define $f :K \to \mathbb{X}$   such that $f(U)=x$ and $f(V)=-x.$ Clearly, $f$ is continuous and by virtue of Theorem \ref{right:sufficient}, $f$ is right symmetric. Since $f$ does not satisfy the condition (ii), this completes the proof.                     
				\end{proof}

				\begin{remark}
						Let $K$ be a compact connected  Hausdorff space and let $\mathbb{X}$ be a real Banach space. Suppose that right symmetric points in $S_{\mathbb{X}}$ are finite. Then the number of right symmetric functions in $S_{C(K, \mathbb{X})}$  are same as the number of right symmetric points in $S_{\mathbb{X}}.$ 
				\end{remark}

 As an immediate consequence of Theorem \ref{right:sufficient}, we obtain that the right symmetric points on the unit sphere of $C(K)$ are precisely the extreme points of the unit ball.
	
		\begin{cor}
		Let $K$ be a compact Hausdorff space and let $C(K)$ be the space of all real valued continuous functions on $K.$  Let $f \in S_{C(K)}.$ Then the following are equivalent:
		\begin{itemize}
			\item[(i)] $f$  is a right symmetric function of $C(K).$ 
		\item[(ii)]  $f(k)=1,$ $\forall k \in K$ or $f(k)=-1,$ $\forall k \in K.$
		\item[(iii)]$f$ is an extreme point of $B_{C(K)}.$
		\end{itemize}
	\end{cor}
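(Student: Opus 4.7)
The plan is to deduce this corollary by specializing Theorem \ref{right:sufficient} to $\mathbb{X}=\mathbb{R}$. The key observation is that Birkhoff--James orthogonality on the real line is degenerate: for any $x\in\mathbb{R}\setminus\{0\}$, the relation $y\perp_B x$ forces $y=0$, and $0\perp_B x$ trivially. Hence every element of $S_{\mathbb{R}}=\{-1,1\}$ is automatically a right symmetric point of $\mathbb{R}$.

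With this in hand, Theorem \ref{right:sufficient} applied to $\mathbb{X}=\mathbb{R}$ asserts that $f\in S_{C(K)}$ is right symmetric if and only if $f(k)\in S_{\mathbb{R}}$ for every $k\in K$, i.e., $|f(k)|=1$ throughout $K$. The latter condition is the classical extreme-point characterization of $B_{C(K)}$: if $|f(k_0)|<1$ for some $k_0$, a Urysohn bump function supported in a neighbourhood of $k_0$ (of the sort already constructed in the proofs of Theorems \ref{perfect} and \ref{right:sufficient}) produces a nontrivial convex decomposition of $f$; conversely, if $|f(k)|=1$ for all $k$ and $f=\tfrac{1}{2}(g+h)$ with $g,h\in B_{C(K)}$, then pointwise strict convexity of the scalar unit interval forces $g=h=f$. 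This settles (i)$\Leftrightarrow$(iii).

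For (ii), the implication (ii)$\Rightarrow$(i) is immediate from the previous paragraph, since any constant function $\pm 1$ satisfies $|f|\equiv 1$. For (i)$\Rightarrow$(ii), once we know $f(K)\subseteq\{-1,1\}$, continuity of $f$ forces $f(K)$ to be a connected subset of the discrete two-point set $\{-1,1\}$, and hence a singleton, under the natural hypothesis that $K$ is connected; in that case $f\equiv 1$ or $f\equiv -1$.

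I do not anticipate any real obstacle: Theorem \ref{right:sufficient} performs the heavy lifting, and the remaining work is the triviality that $\mathbb{R}$ has no nontrivial Birkhoff--James orthogonality relations together with the standard extreme-point description of $B_{C(K)}$, which uses the same Urysohn technology already employed repeatedly in this section.
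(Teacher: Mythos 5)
Your route is exactly the paper's: the corollary is stated there as an ``immediate consequence'' of Theorem \ref{right:sufficient}, i.e.\ specialize to $\mathbb{X}=\mathbb{R}$, observe that $\pm 1$ are (vacuously) right symmetric in $\mathbb{R}$ since $y\perp_B x$ in $\mathbb{R}$ forces $y=0$ or $x=0$, and conclude that $f$ is right symmetric iff $|f(k)|=1$ for all $k$, which is the standard extreme-point description of $B_{C(K)}$. So (i)$\Leftrightarrow$(iii) is fine and matches the intended argument.

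The one point to flag is (ii). You prove (i)$\Rightarrow$(ii) only ``under the natural hypothesis that $K$ is connected,'' but the corollary does not assume connectedness, so as written your argument does not establish the stated equivalence --- and in fact it cannot, because the statement is false without that hypothesis: for $K=\{1,2\}$ discrete, the function $f(1)=1$, $f(2)=-1$ satisfies $|f|\equiv 1$, hence is an extreme point and right symmetric by Theorem \ref{right:sufficient}, yet is not constantly $1$ or constantly $-1$. Your instinct to insert connectedness is the correct repair (it is exactly the role connectedness plays in the corollary immediately preceding this one in the paper); the alternative repair is to replace (ii) by ``$|f(k)|=1$ for all $k\in K$.'' So the gap here lies in the statement rather than in your argument, but you should say explicitly that (i)$\Leftrightarrow$(ii) requires $K$ connected rather than treating it as a harmless extra assumption.
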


Next we show that if $K$ is a  locally compact normal space which is not compact, then the  space $C_0(K, \mathbb{X})$ has no non-zero right symmetric points.

\begin{theorem}\label{locally compact}
	Let $K$ be a locally compact normal space which is not compact and let $\mathbb{X}$ be a Banach space. Then $f \in S_{C_0(K, \mathbb{X})}$ is right symmetric if and only if $f$ is the zero function.
\end{theorem}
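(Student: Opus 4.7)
The plan is to prove the non-trivial direction by contradiction: every $f \in S_{C_0(K, \mathbb{X})}$ fails to be right symmetric, which by homogeneity of the right symmetry condition rules out all non-zero right symmetric elements. My strategy adapts the proof of Theorem \ref{right}(i) to the continuous setting, replacing the discontinuous single-point witness used there with one built from Urysohn bumps that separate $M_f$ from a point $k_0$ outside a compact neighborhood of $M_f$.

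The setup uses that $M_f$ is non-empty and compact (Proposition \ref{compact}), so Theorem \ref{locally compact hausdorff} yields an open $W \supset M_f$ with compact closure $\overline{W}$. Non-compactness of $K$ forces $K \setminus \overline{W} \neq \emptyset$; choosing $k_0$ there and applying Theorem \ref{locally compact hausdorff} once more gives an open $V \ni k_0$ with $\overline{V}$ compact and disjoint from $\overline{W}$. Urysohn's lemma in the normal space $K$ then supplies continuous bumps $h_0, h_1 : K \to [0,1]$ of compact support with $h_0(k_0) = 1$, $h_0 \equiv 0$ off $V$, $h_1 \equiv 1$ on $M_f$, and $h_1 \equiv 0$ off $W$. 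I define $g(k) := h_0(k) w_0 + h_1(k) f(k)$ for a suitable $w_0 \in S_\mathbb{X}$; disjointness of $\overline{V}, \overline{W}$ yields $\|g\| = 1$, attained at $k_0$ (where $g(k_0) = w_0$) and throughout $M_f$ (where $g = f$), and $g$ has compact support, hence lies in $C_0(K, \mathbb{X})$.

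The choice of $w_0$ splits into two cases. If $\dim_\mathbb{K} \mathbb{X} \geq 2$ or $f(k_0) = 0$, I pick $w_0 \in S_\mathbb{X}$ with $w_0 \perp_B f(k_0)$ (existent in either sub-case); then $g \perp_B f$ follows from the direct norm bound $\|g + \lambda f\| \geq \|w_0 + \lambda f(k_0)\| \geq \|w_0\| = 1$. If instead $\mathbb{X} = \mathbb{K}$ with $f(k_0) \neq 0$, no Birkhoff-orthogonal $w_0$ exists in $S_\mathbb{X}$, and I take $w_0 := -f(k_0)/\|f(k_0)\|$; here $g \perp_B f$ is verified through Theorem \ref{martin:continuous2}, since the associated convex hull contains both $1$ (contributed by every $k \in M_f$ via $y^* = \overline{f(k)}$) and $\overline{w_0} f(k_0) = -\|f(k_0)\|$ (contributed by $k_0$), hence contains $0$. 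In either case, $f \not\perp_B g$ follows from Theorem \ref{martin:continuous2}: the constraint $y^*(f(k)) = 1$ forces $k \in M_f$, where $g = f$ forces $y^*(g(k)) = 1$, so the relevant hull equals $\{1\}$ and excludes $0$. This contradicts right symmetry of $f$.

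The chief obstacle is the one-dimensional case with $f(k_0) \neq 0$, where Theorem \ref{right}(i)'s Birkhoff-orthogonal witness has no direct analogue in $S_\mathbb{X}$. The antipodal substitute $w_0 = -f(k_0)/\|f(k_0)\|$ sacrifices the clean norm inequality available in higher dimensions, but keeps orthogonality alive via the convex-hull criterion: it places a strictly negative value at $k_0$ that, combined with the positive contribution from $M_f$, still traps zero inside the hull.
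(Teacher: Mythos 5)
Your proposal is correct and follows essentially the same route as the paper: locate $k_0 \notin M_f$ with $\|f(k_0)\|<1$ (which exists because $f\in C_0(K,\mathbb{X})$ and $K$ is not compact), separate $k_0$ from the compact set $M_f$ by disjoint open sets with compact closures via Theorem \ref{locally compact hausdorff}, build $g=h_0 w_0+h_1 f$ from Urysohn bumps, and show $g\perp_B f$ while $f\not\perp_B g$ through the support-functional criterion, exactly as in the proof of Theorem \ref{right:sufficient}(i). The one substantive point where you go beyond the paper is the case split on $w_0$: the paper simply picks $w_0\in S_{\mathbb{X}}$ with $w_0\perp_B f(k_0)$ after asserting $\|f(k_0)\|=r\in(0,1)$, which silently assumes both that such an intermediate value is attained (it need not be if $K$ is disconnected, though then $f(k_0)=0$ works) and that a Birkhoff-orthogonal unit vector to a nonzero $f(k_0)$ exists (false when $\dim\mathbb{X}=1$). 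Your antipodal choice $w_0=-f(k_0)/\|f(k_0)\|$ in the one-dimensional case, verified through the convex-hull criterion of Theorem \ref{martin:continuous2}, cleanly patches that degenerate situation; otherwise the two arguments coincide.
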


\begin{proof}
	As the sufficient part follows trivially, we only prove the necessary part. Let $f$ be right symmetric. Suppose on the contrary that $f \neq 0.$ As $f \in C_0(K, \mathbb{X}),$  there exists $k_0 \in K$ such that $\|f(k_0)\|=r \in (0,1).$ Take $w_0 \in S_{\mathbb{X}}$ such that $w_0 \perp_B f(k_0).$ From  Proposition \ref{compact}, it follows that $M_f$ is a compact set. Since $K$ is a locally compact  normal space, there exist open sets $U$ and $V$ such that $U \cap V= \emptyset, k_0 \in U,$ $M_f \subset V$  and $\overline{U}, \overline{V}$  both compact (by Theorem \ref{locally compact hausdorff}).
	As  before, by using the Urysohn's Lemma, we can find two continuous functions $h_1,h_2 : K \to [0,1] $   such that $h_1(k_0)=1$ and $h_1(k)=0,$  $\forall k \notin U,$ and $h_2(k)=1,$  $\forall k \in M_f$ and $h_2(k)=0,$  $ \forall k \notin V.$ 	Define $ g : K \to \mathbb{X}$ as 
	 	$$g(k)=h_1(k)w_0+ h_2(k)f(k), ~ \forall k \in K.$$  
	 	Clearly, $g$ is continuous and for any $k \in K \setminus (\overline{U} \cup \overline{V}),$ $\|g(k)\|=0.$ So, $g \in C_0(K, \mathbb{X}).$ Then proceeding similarly as in the proof of Theorem \ref{right:sufficient}(i), we obtain that $g \perp_B f$ and $f \not \perp_B g.$ This contradicts the fact that $f$ is right symmetric. Thus $f=0.$ 
\end{proof}

As a consequence of Theorem \ref{perfect} and  Theorem \ref{locally compact}, we derive the following results.  
\begin{cor}
	Let $K$ be a locally compact, perfectly normal space such that $K$ is not compact and $K$ has no isolated point.  Then the space $C_0(K, \mathbb{X})$ has no non-zero left symmetric points and no non-zero right symmetric points.
\end{cor}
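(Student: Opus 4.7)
The plan is to deduce this corollary by invoking the two earlier theorems and adding one small topological observation about isolated points.

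For the right symmetric statement, I would simply note that perfect normality implies normality, so $K$ is a locally compact normal space that is not compact. Then Theorem \ref{locally compact} applies verbatim and yields that the only right symmetric element of $S_{C_0(K,\mathbb{X})}$ is the zero function; in particular there are no non-zero right symmetric points. This half needs no new argument beyond recognizing that the hypotheses of Theorem \ref{locally compact} are satisfied.

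For the left symmetric statement, I would argue by contradiction. Suppose there exists a non-zero left symmetric $f \in C_0(K,\mathbb{X})$. Normalize so that $\|f\| = 1$. Since $K$ is locally compact and perfectly normal, Theorem \ref{perfect} applies and furnishes a point $k_0 \in K$ such that $f(k_0) \in S_{\mathbb{X}}$ and $f(k) = 0$ for every $k \in K \setminus \{k_0\}$. Now the set $U := \{x \in \mathbb{X} : \|x - f(k_0)\| < \tfrac{1}{2}\}$ is open in $\mathbb{X}$, so by continuity of $f$ its preimage $f^{-1}(U)$ is open in $K$ and contains $k_0$. For any $k \neq k_0$, one has $\|f(k) - f(k_0)\| = \|f(k_0)\| = 1 > \tfrac{1}{2}$, so $k \notin f^{-1}(U)$. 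Thus $f^{-1}(U) = \{k_0\}$, which makes $\{k_0\}$ open, i.e.\ $k_0$ is an isolated point of $K$. This contradicts the hypothesis that $K$ has no isolated point.

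There is no real obstacle here: the corollary is essentially an immediate consequence of Theorem \ref{perfect} and Theorem \ref{locally compact} once one observes that the very structure of the left symmetric functions produced by Theorem \ref{perfect} (supported at a single point with unit norm value) cannot exist unless that point is isolated. The only minor thing to be careful about is the normalization to $\|f\|=1$ before applying Theorem \ref{perfect}, which is harmless since left symmetry is preserved under nonzero scaling.
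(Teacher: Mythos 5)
Your proposal is correct and follows exactly the route the paper intends: the right symmetric half is Theorem \ref{locally compact} applied verbatim (perfect normality giving normality), and the left symmetric half combines Theorem \ref{perfect} with the same isolated-point argument (preimage of a small ball around $f(k_0)$ equals $\{k_0\}$) that the paper already uses in its corollary on continuous left symmetric functions in $\ell_{\infty}(K,\mathbb{X})$. No gaps.
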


Considering $K= \{1,2, \ldots, n \} ,$ the following corollary is obvious from  Theorem \ref{left} and Theorem \ref{right:sufficient}.

\begin{cor}\label{directsum}
	Let $\mathbb{X}$ be a Banach space and let $\widetilde{x}=(x_1, x_2, \ldots, x_n) \in S_{\ell_{\infty}^n(\mathbb{X})}.$ Then 
	\begin{itemize}
		\item[(i)] $\widetilde{x}$ is left symmetric if there exists $i_0 \in \{1,2, \ldots, n\}$ such that $x_{i_0} \in S_{\mathbb{X}}$ is left symmetric and $x_j=0,$ for any $j \in \{1, 2, \ldots, n\}\setminus \{i_0\}$.
		
		\item[(ii)] $\widetilde{x}$ is right symmetric if and only if for any $1 \leq i \leq n ,$ $x_i \in S_{\mathbb{X}}$ and $x_i$ is right symmetric, when $\mathbb{X}$ is a real Banach space.
		
		\item[(iii)] $\widetilde{x}$ is symmetric if and only if $\widetilde{x}=0.$ 
	\end{itemize}
\end{cor}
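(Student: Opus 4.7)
The plan is to recognize that $\ell_{\infty}^n(\mathbb{X})$ is exactly $\ell_{\infty}(K, \mathbb{X})$ for the finite set $K = \{1, 2, \ldots, n\}$ equipped with the discrete topology, which is automatically compact Hausdorff. Under this identification every function $K \to \mathbb{X}$ is continuous, so in fact $\ell_{\infty}^n(\mathbb{X}) = \ell_{\infty}(K, \mathbb{X}) = C(K, \mathbb{X})$, and the tuple $\widetilde{x} = (x_1, \ldots, x_n)$ corresponds to the function $f : K \to \mathbb{X}$ given by $f(i) = x_i$. All three parts then reduce to direct applications of the theorems already established.

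For part (i), I would simply invoke the sufficient direction of Theorem \ref{left}: the assumption that $x_{i_0} \in S_{\mathbb{X}}$ is left symmetric and $x_j = 0$ for $j \neq i_0$ is precisely conditions (i)--(ii) of Theorem \ref{left} with $k_0 = i_0$, so the associated $f$, and hence $\widetilde{x}$, is left symmetric in $\ell_{\infty}^n(\mathbb{X})$.

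For part (ii), since $K$ is compact Hausdorff and $\mathbb{X}$ is a real Banach space, I would apply Theorem \ref{right:sufficient} verbatim. The conditions $x_i \in S_{\mathbb{X}}$ and $x_i$ right symmetric for every $1 \leq i \leq n$ are exactly the characterisation of right symmetric functions in $C(K, \mathbb{X})$ supplied by that theorem, giving both directions simultaneously.

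For part (iii), I would combine the full \emph{if and only if} form of Theorem \ref{left} with part (ii). If $\widetilde{x}$ were both left and right symmetric and non-zero, then Theorem \ref{left} would force all but one coordinate to vanish, while (ii) would force every coordinate to lie on $S_{\mathbb{X}}$; for $n > 1$ these requirements are incompatible, so $\widetilde{x} = 0$. The converse is trivial. There is no genuine obstacle in this argument; the only point that needs to be recorded is the identification $\ell_{\infty}^n(\mathbb{X}) = C(K, \mathbb{X})$ for $K$ finite discrete, after which everything is contained in Theorems \ref{left} and \ref{right:sufficient}.
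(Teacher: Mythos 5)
Your proposal is correct and takes essentially the same route as the paper: the paper derives the corollary in exactly this way, by viewing $\ell_{\infty}^n(\mathbb{X})$ as $\ell_{\infty}(K,\mathbb{X})=C(K,\mathbb{X})$ for the finite discrete set $K=\{1,2,\ldots,n\}$ and then invoking Theorem \ref{left} for (i), Theorem \ref{right:sufficient} for (ii), and their combination for (iii). Your explicit remark that the incompatibility in (iii) requires $n>1$ is a small but worthwhile precision that the paper leaves implicit.
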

	

The symmetric points in a Banach space play a vital role in identifying the onto linear isometries on that space, by virtue of the following fact.

\begin{prop}\cite[Cor. 1.1]{BRS}\label{prop:isometry}
	Let $\mathbb{X}$ and $\mathbb{Y}$ be normed linear spaces and let $T \in \mathbb{L}(\mathbb{X}, \mathbb{Y})$ be an onto linear
	isometry. Then $x \in  \mathbb{X}$ is left symmetric (resp. right symmetric) if and only if $T (x) $ is
	left symmetric (resp. right symmetric) in $\mathbb{Y}$.
\end{prop}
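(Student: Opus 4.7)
The plan is to reduce everything to the single observation that an onto linear isometry preserves Birkhoff--James orthogonality in both directions. More precisely, since $T$ is a linear isometry, for any $x,y\in\mathbb{X}$ and any scalar $\lambda$ we have $\|x+\lambda y\|=\|T(x+\lambda y)\|=\|Tx+\lambda Ty\|$, so $x\perp_B y$ holds if and only if $Tx\perp_B Ty$ holds. This bi-implication is the only analytic input needed; the rest is a transport argument using that $T$ is a bijection.

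Assuming this, I would argue the left-symmetric direction as follows. Suppose $x\in\mathbb{X}$ is left symmetric and let $v\in\mathbb{Y}$ satisfy $Tx\perp_B v$. Since $T$ is onto, write $v=Ty$ for a unique $y\in\mathbb{X}$; the orthogonality-preserving property then gives $x\perp_B y$, and the left symmetry of $x$ yields $y\perp_B x$, which in turn gives $Ty\perp_B Tx$, i.e.\ $v\perp_B Tx$. Hence $Tx$ is left symmetric in $\mathbb{Y}$. For the converse direction, note that $T^{-1}\in\mathbb{L}(\mathbb{Y},\mathbb{X})$ is again an onto linear isometry, so applying what we just proved to $T^{-1}$ and to the element $Tx$ shows that if $Tx$ is left symmetric then $x=T^{-1}(Tx)$ is left symmetric.

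The right-symmetric case is completely parallel: replace every occurrence of ``$\alpha\perp_B\beta$'' by the reverse orthogonality relation and run the same argument. There is essentially no obstacle here; the only small care needed is to invoke surjectivity of $T$ when producing the preimage $y$ (so that the symmetry hypothesis on $x$ can be applied to an arbitrary test vector in $\mathbb{X}$), and to appeal to $T^{-1}$ for the reverse implication. Since the statement is quoted from \cite{BRS}, I would simply give a one-paragraph proof along these lines, or alternatively cite the reference and omit the proof.
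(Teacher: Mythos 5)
Your argument is correct: the single observation that $\|x+\lambda y\|=\|Tx+\lambda Ty\|$ for every scalar $\lambda$ gives $x\perp_B y\Leftrightarrow Tx\perp_B Ty$, and the transport via surjectivity together with the reverse implication via $T^{-1}$ is exactly the standard proof of this fact. The paper itself states this proposition as a quoted result from \cite{BRS} and supplies no proof, so your one-paragraph argument is precisely what the citation stands in for; there is nothing to add or correct.
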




Using this connection between symmetric points and onto isometries,
we provide a necessary condition for onto isometries on the space $\ell_{\infty}^n(\mathbb{X}).$ In the following result for a Banach space $\mathbb{X},$ we denote the set of all norm one left symmetric points as $\mathcal{L}.$ Also for any $ 1\leq i \leq n$ and for any $ x \in \mathbb{X}, $ we write $e_i(x)= (0, 0, \ldots \underset{i-th}{, x,} 0, \ldots, 0) \in \ell_{\infty}^n(\mathbb{X}).$

\begin{theorem}
	Let $\mathbb{X}$ be a  Banach space such that  $span ~ \mathcal{L}= \mathbb{X}.$  Let $T $ be an onto linear isometry on $\ell_{\infty}^n(\mathbb{X})$ . Then there exists a basis $\mathcal{B} \subset \mathcal{L}$ of $\mathbb{X}$ such that for any $(x_1, x_2, \ldots, x_n) \in \ell_{\infty}^n(\mathbb{X}),$
	\[
	T(x_1, x_2, \ldots, x_n)= \sum_{i=1}^{n} \sum_{k=1}^{m_i}\alpha_k^i e_{\sigma(i)} (\psi(z_k)),
	\]
where for any $ 1\leq i \leq n,$ $x_i= \sum_{k=1}^{m_i} \alpha_k^i z_k, $ $z_k \in \mathcal{B}, \forall 1 \leq k \leq m_i,$  $\sigma$ and $\psi $ are  permutations on the set $\{1,2, \ldots, n\}$ and $\mathcal{L},$ respectively. 
 \end{theorem}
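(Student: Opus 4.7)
The plan is to combine Proposition \ref{prop:isometry}, which guarantees that every onto linear isometry carries left symmetric points to left symmetric points, with the complete description of the left symmetric elements of $\ell_\infty^n(\mathbb{X})$ given by Theorem \ref{left} (equivalently Corollary \ref{directsum}(i)), and then to propagate the induced structure by linearity using the hypothesis $\mathrm{span}\,\mathcal{L} = \mathbb{X}$. By Theorem \ref{left}, the unit left symmetric elements of $\ell_\infty^n(\mathbb{X})$ are exactly the vectors of the form $e_i(x)$ with $i \in \{1,\ldots,n\}$ and $x \in \mathcal{L}$. Applying Proposition \ref{prop:isometry} to $T$ (and, for bijectivity, to $T^{-1}$), for each pair $(i,x)$ with $x\in \mathcal{L}$ there exist unique $j(i,x) \in \{1,\ldots,n\}$ and $y(i,x) \in \mathcal{L}$ satisfying
\[
T(e_i(x)) = e_{j(i,x)}\bigl(y(i,x)\bigr),
\]
and $(i,x) \mapsto (j(i,x), y(i,x))$ is a bijection on $\{1,\ldots,n\} \times \mathcal{L}$.

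The crux of the argument is to show that for each fixed $i$, the index $j(i,\cdot)$ is constant on $\mathcal{L}$. Suppose on the contrary that $z_1, z_2 \in \mathcal{L}$ give $j(i,z_1) = j_1 \neq j_2 = j(i,z_2)$; linearity of $T$ together with the $\ell_\infty$-sum norm forces
\[
\|\alpha z_1 + \beta z_2\|_{\mathbb{X}} = \|T(e_i(\alpha z_1 + \beta z_2))\| = \|\alpha e_{j_1}(y(i,z_1)) + \beta e_{j_2}(y(i,z_2))\| = \max(|\alpha|,|\beta|)
\]
for all scalars $\alpha,\beta$, so $\mathrm{span}(z_1, z_2) \subset \mathbb{X}$ is an isometric copy of $\ell_\infty^2$ generated by two left symmetric vectors. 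Ruling this out is the principal obstacle of the proof; the idea is to apply $T^{-1}$ to a left symmetric element lying in this $\ell_\infty^2$ subspace whose preimage, under the single-support constraint of Theorem \ref{left}, cannot be left symmetric inside $e_i(\mathbb{X})$, producing a contradiction. Once constancy is established, set $\sigma(i) := j(i,\cdot)$; the surjectivity of $T$ and the direct-sum decomposition $\ell_\infty^n(\mathbb{X}) = \bigoplus_i e_i(\mathbb{X})$ then force $\sigma$ to be a permutation of $\{1,\ldots,n\}$, since otherwise two distinct copies $e_{i_1}(\mathbb{X})$ and $e_{i_2}(\mathbb{X})$ would be sent inside the same $e_{\sigma(i_1)}(\mathbb{X})$.

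Finally, fix a basis $\mathcal{B} \subset \mathcal{L}$ of $\mathbb{X}$ (available by hypothesis). After verifying, via the same rigidity used above, that $y(i,z)$ depends only on $z$ for $z \in \mathcal{B}$, define $\psi(z) := y(i,z)$ on $\mathcal{B}$ and extend $\psi$ to a permutation of $\mathcal{L}$ using the bijectivity from the first step. For an arbitrary $\widetilde x = (x_1, \ldots, x_n) \in \ell_\infty^n(\mathbb{X})$, expanding each coordinate as $x_i = \sum_k \alpha_k^i z_k$ in $\mathcal{B}$, the linearity of $T$ yields
\[
T(\widetilde x) = \sum_{i=1}^n T(e_i(x_i)) = \sum_{i=1}^n \sum_k \alpha_k^i\, e_{\sigma(i)}\bigl(\psi(z_k)\bigr),
\]
which is the claimed formula.
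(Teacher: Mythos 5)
Your strategy coincides with the paper's: use Proposition \ref{prop:isometry} together with Theorem \ref{left} to see that $T$ permutes the set $\mathcal{S}=\{e_i(z): 1\le i\le n,\ z\in\mathcal{L}\}$, write $T(e_i(z))=e_{j(i,z)}(y(i,z))$, and extend by linearity via $\mathrm{span}\,\mathcal{L}=\mathbb{X}$. The difference is that the paper passes from this directly to $T(e_i(z))=e_{\sigma(i)}(\psi(z))$ with no justification, whereas you correctly isolate the two hidden claims: that $j(i,\cdot)$ is constant in $z$, and that $y(i,z)$ does not depend on $i$. However, you only sketch the first (``the idea is to apply $T^{-1}$\dots'') and defer the second to ``the same rigidity'', so the proposal is incomplete exactly at the point you yourself identify as the crux.

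The deeper problem is that neither claim can be proved, because both fail; the configuration you hope to rule out actually occurs. Your own computation shows that $j(i,z_1)\neq j(i,z_2)$ forces $\mathrm{span}(z_1,z_2)$ to be an isometric copy of $\ell_\infty^2$ spanned by two left symmetric unit vectors --- but this is not absurd: it happens inside $\mathbb{X}=\ell_\infty^2$ with $z_1=e_1$, $z_2=e_2$, both left symmetric by Theorem \ref{left}. Concretely, take $\mathbb{X}=\ell_\infty^2$ and $n=2$, so that $\ell_\infty^2(\ell_\infty^2)$ is isometric to $\ell_\infty^4$ and $T\big((a,b),(c,d)\big)=\big((a,c),(b,d)\big)$ is an onto linear isometry; then $T(e_1(e_1))=e_1(e_1)$ while $T(e_1(e_2))=e_2(e_1)$, so the block index depends on $z$ and $T$ has none of the claimed product form. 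Even when $j(i,\cdot)$ is constant, $y(i,z)$ can depend on $i$: for $\mathbb{X}=\mathbb{R}$, $n=2$, the isometry $T(x_1,x_2)=(x_1,-x_2)$ sends $e_1(1)\mapsto e_1(1)$ and $e_2(1)\mapsto e_2(-1)$, so no single permutation $\psi$ of $\mathcal{L}=\{\pm 1\}$ fits the displayed formula. What survives of the argument (yours and the paper's) is only that $T$ induces a bijection of $\{1,\dots,n\}\times\mathcal{L}$ compatible with linearity; the factorization into a permutation $\sigma$ of indices and a single permutation $\psi$ of $\mathcal{L}$ is where both proofs, and indeed the statement itself, break down.
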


\begin{proof}
	Let  $\mathcal{L} \subset S_{\mathbb{X}}$ be the set of all left symmetric points of $S_\mathbb{X}.$ Applying Theorem \ref{left}, the set of left symmetric points of $S_{\ell_{\infty}^n(\mathbb{X})}$ is given by $ \mathcal{S}=\{ e_i(z): 1 \leq i \leq n, z \in \mathcal{L}\}. $  Following Proposition \ref{prop:isometry}, if $T$ is an onto isometry then $T(s) \in \mathcal{S}, \forall s \in \mathcal{S}.$ This implies that for any $1 \leq i \leq n, z \in \mathcal{L},$ 
	$$T(e_i(z))= e_{\sigma(i)}(\psi(z)),$$ 
	where $\sigma$ is a permutation on $\{1, 2, \ldots, n\}$ and $\psi$ is a permutation on $\mathcal{L}.$  For any $(x_1, x_2, \ldots, x_n) \in \ell_{\infty}^n(\mathbb{X}),$ $x_i= \sum_{k=1}^{m_i} \alpha_k z_k,$ where $\alpha_k \in \mathbb{K}, z_k \in \mathcal{L}, \forall 1 \leq k \leq m_i.$
	Suppose $\mathcal{B} \subset \mathcal{L}$ is a basis of $\mathbb{X}$ and $x_i =   \sum_{k=1}^{m_i} \alpha_k^i z_k, $ $z_k \in \mathcal{B}, \forall 1 \leq k \leq m_i.$ Then 
	\begin{eqnarray*}
		T(x_1, x_2, \ldots, x_n)=  \sum_{i=1}^{n} T (e_i(x)) &=& \sum_{i=1}^{n} T (e_i( \sum_{k=1}^{m_i} \alpha_k^i z_k)) \\
		&=& \sum_{i=1}^{n}\sum_{k=1}^{m_i} \alpha_k^i T (e_i(   z_k)) \\
			&=& \sum_{i=1}^{n}\sum_{k=1}^{m_i} \alpha_k^i e_{\sigma(i)}(\psi(z_k)).
	\end{eqnarray*}
\end{proof}

In this context, it is worth noting that it follows from \cite[Th. 2.7, Th 2.9]{CSS} that $span~\mathcal{L}= \mathbb{X},$  whenever $\mathbb{X}= \ell_p^n$ ($1 <p \leq \infty$). 
  It is well-known that the signed permutations are the only onto isometries on $\ell_p^n $ spaces. The concept of symmetric points was utilized in \cite[Th.  2.11]{CSS} to give an elementary proof of the same. Applying similar technique, in \cite[Th. 4.5]{BRS}, a simple proof of the classical Banach-Lamperti Theorem has been given, which states that the signed permutations are the only onto isometries on $\ell_p$ spaces.

	
	\section*{Section-II}
	
	In this section, our primary goal is to study the symmetric points in operator spaces defined on real Banach spaces.   
	For a Banach space $\mathbb{X}$ with $Ext(B_{\mathbb{X}}) \neq \emptyset,$  let $e(B_{\mathbb{X}})= \{ x, y \in Ext(B_{\mathbb{X}}): x \neq \pm y\}.$  We first observe that the operator space $\mathbb{L}(\mathbb{X}, \mathbb{Y})$ can be embedded into the spaces of continuous functions.   
	
	\begin{prop}\label{prop:embedd}
		Let $\mathbb{X}, \mathbb{Y}$ be  Banach spaces. Then $\mathbb{L}(\mathbb{X}, \mathbb{Y})$ is  embedded into $C(e(B_{\mathbb{Y}^*}), \mathbb{X}^*).$ 
	\end{prop}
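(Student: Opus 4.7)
The natural candidate embedding is $\Phi : \mathbb{L}(\mathbb{X}, \mathbb{Y}) \to C(e(B_{\mathbb{Y}^*}), \mathbb{X}^*)$ given by
$$\Phi(T)(y^*) := T^*(y^*) = y^* \circ T.$$
I would first verify that $\Phi(T)$ actually lies in the target space, and then check that $\Phi$ is a linear isometry. Linearity is immediate from the functorial identity $(S + \alpha T)^* = S^* + \alpha T^*$, so the substantive task reduces to establishing $\|\Phi(T)\|_\infty = \|T\|$; this simultaneously forces injectivity and yields an isometric embedding.

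For the norm equality, the key tool is the Bauer maximum principle (or Krein--Milman together with a convexity argument). By the duality formula for the operator norm,
$$\|T\| = \sup_{x \in B_{\mathbb{X}}} \|Tx\| = \sup_{y^* \in B_{\mathbb{Y}^*}} \sup_{x \in B_{\mathbb{X}}} y^*(Tx) = \sup_{y^* \in B_{\mathbb{Y}^*}} \|T^*(y^*)\|.$$
The function $y^* \mapsto \|T^*(y^*)\|$ is convex and weak*-lower semicontinuous on the weak*-compact convex set $B_{\mathbb{Y}^*}$, so its supremum is attained on $Ext(B_{\mathbb{Y}^*})$. Because $\|T^*(-y^*)\| = \|T^*(y^*)\|$, passing to the quotient by the $\pm$-identification defining $e(B_{\mathbb{Y}^*})$ does not affect the supremum, giving
$$\|T\| = \sup_{y^* \in Ext(B_{\mathbb{Y}^*})} \|T^*(y^*)\| = \sup_{y^* \in e(B_{\mathbb{Y}^*})} \|T^*(y^*)\| = \|\Phi(T)\|_\infty.$$

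The step I expect to require the most care is verifying that $\Phi(T)$ is a continuous $\mathbb{X}^*$-valued function on $e(B_{\mathbb{Y}^*})$. With the natural relative weak*-topology on $e(B_{\mathbb{Y}^*})$, the map $T^*$ is only weak*-to-weak* continuous, while norm continuity into $\mathbb{X}^*$ is a strictly stronger demand; for compact $T$ this is automatic on bounded sets, but for a general bounded $T$ one must either read the continuity in the weak*-sense on $\mathbb{X}^*$, or equip $e(B_{\mathbb{Y}^*})$ with the relative norm topology, under which $T^*$ is trivially continuous by boundedness. Once this topological convention is fixed, the well-definedness of $\Phi$ is straightforward, and combined with the isometry statement above, the proof of the embedding is complete.
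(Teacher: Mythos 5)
Your proposal is correct and follows essentially the same route as the paper: the same map $T \mapsto T^*|_{e(B_{\mathbb{Y}^*})}$, linearity from $(S+\alpha T)^*=S^*+\alpha T^*$, and the isometry obtained by reducing the supremum over $B_{\mathbb{Y}^*}$ to the extreme points (the paper does this pointwise, exchanging the two suprema and citing $\|y\|=\sup_{y^*\in Ext(B_{\mathbb{Y}^*})}|y^*(y)|$ from Megginson, whereas you apply the extreme-point reduction to the convex weak*-lower semicontinuous function $y^*\mapsto\|T^*y^*\|$ all at once). Two minor remarks: Bauer's maximum principle as usually stated requires upper semicontinuity and yields attainment, which can fail here, so what you actually need --- and what your Krein--Milman-plus-convexity parenthetical correctly delivers --- is only the equality of the two suprema, not attainment; and your explicit attention to the topology on $e(B_{\mathbb{Y}^*})$ is warranted, since the paper silently uses the norm topology, under which continuity of $T^*$ is indeed trivial.
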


	\begin{proof}
We		define $ 	\phi: \mathbb{L}(\mathbb{X},\mathbb{Y}) \quad  \longrightarrow  \quad  C(e(B_{\mathbb{Y}^*}), \mathbb{X}^*) $ as 
	$$ 	\phi(T)\quad  =  \quad f_T, \, \, \forall  T \in \mathbb{L}(\mathbb{X}, \mathbb{Y}) , $$
	where $ f_T:  e(B_{\mathbb{Y}^*}) \to \mathbb{X}^* $ is given by 
			$$ f_T( y^*)=  T^*( y^*), \, \, \forall y^* \in e(B_{\mathbb{Y}^*}).$$
		As $T$ is continuous and $f_T= T^*|_{e(B_{\mathbb{Y}^*})},$  $f_T$ is continuous. Clearly, $\phi$ is well-defined. We show that $\phi$ is a linear isometry.
First we observe that for any $k \in K$ and for any scalar $\alpha, \beta,$
		\begin{align*}
			f_{\alpha T_1+ \beta T_2}(y^*)= (\alpha T_1 + \beta T_2)^*(y^*)= \alpha T_1^*(y^*)+ \beta T_2^*(y^*)= (\alpha f_{T_1}+ \beta f_{T_2}) (y^*).
		\end{align*}
		This implies that $\phi( \alpha T_1+ \beta T_2)= \alpha \phi(T_1) + \beta \phi(T_2).$ 
		Also,
		\begin{align*}
			\|\phi(T)\|=\|f_T\|= \sup_{y^* \in e(B_{\mathbb{Y}^*})} \|f_T(y^*)\| =& \quad  \sup_{y^* \in e(B_{\mathbb{Y}^*})} \|T^*(y^*) \|\\ =& \quad  \sup_{y^* \in e(B_{\mathbb{Y}^*})} \sup_{x \in S_{\mathbb{X}}} |T^*(y^*) (x)| \\
			=& \quad  \sup_{x \in S_{\mathbb{X}}} \sup_{y^* \in e(B_{\mathbb{Y}^*})} |T^*(y^*) (x)| \\
			=& \quad  \sup_{x \in S_{\mathbb{X}}} \sup_{y^* \in e(B_{\mathbb{Y}^*})} |y^* (Tx)|.
		\end{align*} 
	Following \cite[Cor. 2.10.7]{Megg}, $\sup_{y^* \in e(B_{\mathbb{Y}^*})}|y^*(Tx)|= \|Tx\|,$ which implies that $$\|\phi(T)\|=\sup_{x \in S_{\mathbb{X}}} \|Tx\|= \|T\|. $$	
		This proves that $\phi$ is an isometry from $\mathbb{L}(\mathbb{X},\mathbb{Y})$ into $C(e(B_{\mathbb{Y}^*}), \mathbb{X}^*),$ finishing the proof.
		
	\end{proof}

    Next we present    sufficient conditions for an operator to be left symmetric and right symmetric. 

      \begin{theorem}\label{nice:left}
    	Let $\mathbb{X}, \mathbb{Y}$ be Banach spaces and  let $T \in S_{\mathbb{L}(\mathbb{X}, \mathbb{Y})}.$ Then 
    	\begin{itemize}
    		\item[(i)]  $T$ is left symmetric if  there exists $y_0^* \in Ext(B_{\mathbb{Y}^*})$ such that $T^*(y_0^*) $ is a left symmetric point and $T^*(y^*)=0, \forall y^* \in Ext(B_{\mathbb{Y}^*}) \setminus \{\pm y_0^*\}.$
    		\item [(ii)] $T$ is right symmetric if for each $y^* \in Ext(B_{\mathbb{Y}^*}),$ $T^*(y^*) $ is a right symmetric point of $S_{\mathbb{X}^*}$ and 	$Ext(B_{\mathbb{Y}^*})$ is a compact set in $\mathbb{Y}^*.$
    	\end{itemize} 
    \end{theorem}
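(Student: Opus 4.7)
The plan is to transfer the problem from the operator space to a function space via the isometric embedding $\phi : \mathbb{L}(\mathbb{X},\mathbb{Y}) \to C(e(B_{\mathbb{Y}^*}), \mathbb{X}^*)$ of Proposition \ref{prop:embedd}, where $\phi(T) = f_T$ with $f_T(y^*) = T^*(y^*)$. Because $\phi$ is a linear isometry, Birkhoff--James orthogonality is preserved in both directions: $T \perp_B S$ if and only if $\phi(T) \perp_B \phi(S)$. Consequently, if $f_T$ is left (resp. right) symmetric in an ambient function space containing the image of $\phi$, then $T$ is left (resp. right) symmetric in $\mathbb{L}(\mathbb{X},\mathbb{Y})$. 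So the task reduces to verifying the sufficient conditions of the previously proved function-space theorems for $f_T$.

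For (i), I would check that $f_T$ satisfies both conditions of Theorem \ref{left}. Since $e(B_{\mathbb{Y}^*})$ contains exactly one of each pair $\{y^*,-y^*\}$, I may assume without loss of generality that $y_0^* \in e(B_{\mathbb{Y}^*})$. The hypothesis that $T^*(y^*) = 0$ for $y^* \in Ext(B_{\mathbb{Y}^*}) \setminus \{\pm y_0^*\}$ then gives $f_T(y^*) = 0$ for every $y^* \in e(B_{\mathbb{Y}^*}) \setminus \{y_0^*\}$, while $f_T(y_0^*) = T^*(y_0^*)$ is left symmetric in $\mathbb{X}^*$ by assumption. Using $\|T\| = \sup_{y^* \in e(B_{\mathbb{Y}^*})} \|T^*(y^*)\|$ (established inside the proof of Proposition \ref{prop:embedd}), the vanishing on all other points forces $\|T^*(y_0^*)\| = 1$, so $f_T(y_0^*) \in S_{\mathbb{X}^*}$. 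Both clauses of Theorem \ref{left} are thus met for $f_T$ viewed as an element of $\ell_{\infty}(e(B_{\mathbb{Y}^*}), \mathbb{X}^*)$, so $f_T$ is left symmetric there. Since left symmetry is inherited by subspaces, $f_T$ is left symmetric in $C(e(B_{\mathbb{Y}^*}), \mathbb{X}^*)$, and pulling back through $\phi$ gives left symmetry of $T$.

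For (ii), I would apply Theorem \ref{right:sufficient}. The hypothesis that $Ext(B_{\mathbb{Y}^*})$ is compact makes $e(B_{\mathbb{Y}^*})$ compact Hausdorff, interpreted as the quotient of $Ext(B_{\mathbb{Y}^*})$ by the $\pm$-identification (finite orbits in a compact Hausdorff space yield a compact Hausdorff quotient). By assumption, for every $y^* \in e(B_{\mathbb{Y}^*})$ one has $f_T(y^*) = T^*(y^*) \in S_{\mathbb{X}^*}$ and $f_T(y^*)$ is a right symmetric point of $\mathbb{X}^*$. These are exactly the two conditions of Theorem \ref{right:sufficient} (applied in the real setting inherited from $\mathbb{X}$ to $\mathbb{X}^*$), so $f_T$ is right symmetric in $C(e(B_{\mathbb{Y}^*}), \mathbb{X}^*)$, and the isometry argument lifts this to right symmetry of $T$ in $\mathbb{L}(\mathbb{X},\mathbb{Y})$. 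The main obstacle is the bookkeeping around $e(B_{\mathbb{Y}^*})$: one must carefully align the ``one representative per $\pm$-pair'' convention inside the embedding with the $\{\pm y_0^*\}$ exclusion in hypothesis (i), and confirm that compactness of $Ext(B_{\mathbb{Y}^*})$ survives the identification in hypothesis (ii); once these identifications are made, each part collapses to a direct application of Theorem \ref{left} or Theorem \ref{right:sufficient}.
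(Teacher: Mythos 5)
Your overall route---transferring $T$ to the function $f_T(y^*)=T^*(y^*)$ via an isometric embedding into a function space over the extreme points of $B_{\mathbb{Y}^*}$ and then invoking Theorem \ref{left} and Theorem \ref{right:sufficient}---is exactly the paper's strategy. Your part (i) is sound; in fact, working over a set of one representative per antipodal pair is the cleaner way to make $f_T$ vanish off a \emph{single} point, as condition (i) of Theorem \ref{left} literally requires (the paper's own proof of (i) works over all of $Ext(B_{\mathbb{Y}^*})$, where $f_T$ is supported on the two points $\pm y_0^*$).

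Part (ii), however, has a genuine gap. You propose to realize $e(B_{\mathbb{Y}^*})$ as the quotient of $Ext(B_{\mathbb{Y}^*})$ by the identification $y^*\sim -y^*$ in order to obtain a compact Hausdorff domain. But $f_T$ is \emph{odd}: $f_T(-y^*)=T^*(-y^*)=-T^*(y^*)=-f_T(y^*)$, so $f_T$ is not constant on the orbits and does not descend to a well-defined function on that quotient; the assignment $T\mapsto f_T$ is therefore not an embedding into continuous functions on the quotient space. The alternative reading of $e(B_{\mathbb{Y}^*})$---a set-theoretic section choosing one point from each pair, equipped with the subspace topology---does not help either, since such a section of a compact set need not be compact (consider antipodal pairs on a circle). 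The repair is simply not to quotient at all: take $K=Ext(B_{\mathbb{Y}^*})$ itself, which is compact Hausdorff by hypothesis, observe that $T\mapsto f_T$ is a linear isometry into $C(Ext(B_{\mathbb{Y}^*}),\mathbb{X}^*)$, and note that the hypotheses of Theorem \ref{right:sufficient} hold at \emph{every} point of $Ext(B_{\mathbb{Y}^*})$, i.e.\ at both members of each antipodal pair, since $-x^*$ lies in $S_{\mathbb{X}^*}$ and is right symmetric whenever $x^*$ is. This is what the paper does, and with that change your argument for (ii) goes through.
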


    \begin{proof}
   (i)  We define $ 	\phi: \mathbb{L}(\mathbb{X},\mathbb{Y}) \quad  \longrightarrow  \quad  \ell_{\infty}(Ext(B_{\mathbb{Y}^*}), \mathbb{X}^*) $  as  
    	  $$ 	\phi(T)\quad  =  \quad f_T \quad   \forall  T \in \mathbb{L}(\mathbb{X}, \mathbb{Y}) , $$
    	  where $ f_T:  Ext(B_{\mathbb{Y}^*}) \to \mathbb{X}^* $ is given by 
    	  $$ f_T( y^*)=  T^*( y^*), \, \, \forall y^* \in Ext(B_{\mathbb{Y}^*}).$$
    	   Proceeding similarly as in the proof of Proposition \ref{prop:embedd}, we can show that $\phi$ is a linear isometry. So, $\mathbb{L}(\mathbb{X}, \mathbb{Y})$ is embedded into $\ell_{\infty}(Ext(B_{\mathbb{Y}^*}), \mathbb{X}^*).$
    As 	there exists $y_0^* \in Ext(B_{\mathbb{Y}^*})$ such that $f_T(y_0^*) $ is a left symmetric point and $f_T(y^*)=0, \forall y^* \in Ext(B_{\mathbb{Y}^*}) \setminus \{\pm y_0^*\}, $ it follows from Theorem \ref{left} that $ f_T$ is left symmetric in $\ell_{\infty}(Ext(B_{\mathbb{Y}^*}), \mathbb{X}^*).$ Clearly, $f_T$ is also  left symmetric  in $\phi(\mathbb{L}(\mathbb{X}, \mathbb{Y})).$ Since  left symmetricity is preserved under isometric isomorphism, $T$ 
     is left symmetric in $\mathbb{L}(\mathbb{X}, \mathbb{Y}).$	
     \smallskip
     
     (ii) Consider   $$ 	\phi: \mathbb{L}(\mathbb{X},\mathbb{Y}) \quad  \longrightarrow  \quad  C(Ext(B_{\mathbb{Y}^*}), \mathbb{X}^*) ,$$ defined as 
     $$ 	\phi(T)\quad  =  \quad f_T \quad   \forall  T \in \mathbb{L}(\mathbb{X}, \mathbb{Y}) , $$
     where $ f_T:  Ext(B_{\mathbb{Y}^*}) \to \mathbb{X}^* $ is given by 
     $$ f_T( y^*)=  T^*( y^*), \, \, \forall y^* \in Ext(B_{\mathbb{Y}^*}).$$
     It is easy to see that  $\phi$  is a linear isometry from $\mathbb{L}(\mathbb{X},\mathbb{Y})$ into $ C(Ext(B_{\mathbb{Y}^*}), \mathbb{X}^*) $. Since   $f_T(y^*) \in S_{\mathbb{X}^*}$ is right symmetric for each  $y^* \in Ext(B_{\mathbb{Y}^*}),$ it follows from  Theorem \ref{right:sufficient} that  $f_T$ is right symmetric in $C(Ext(B_{\mathbb{Y}^*}), \mathbb{X}^*).$  Clearly, $f_T$ is   right symmetric  in $\phi(\mathbb{L}(\mathbb{X}, \mathbb{Y})).$ Since   right symmetricity is preserved under isometric isomorphism,
     $T$ is right symmetric in $\mathbb{L}(\mathbb{X}, \mathbb{Y}).$
    	

    \end{proof}

	


	For some special Banach spaces, the Proposition  \ref{prop:embedd} can be strengthened as follows. 
	
	\begin{prop}\label{prop:isometric}
		\begin{itemize}
			\item[(i)] The space  $\mathbb{L}(\ell_{1}^n, \mathbb{Y})$ is isometrically isomorphic to $\ell_{\infty}^n(\mathbb{Y}),$ for any Banach space $\mathbb{Y}.$ 
			
				\item[(ii)]  The space  $\mathbb{K}( \mathbb{X}, C(K))$ is isometrically isomorphic to $C(K, \mathbb{X}^*),$ where $K$ is a compact Hausdorff space.
				
			\item[(iii)]  The space $\mathbb{L}( \mathbb{X}, \ell_{\infty}^n)$ is isometrically isomorphic to $\ell_{\infty}^n(\mathbb{X}^*),$
 for any Banach space $\mathbb{X}.$ 			
			\end{itemize}
	\end{prop}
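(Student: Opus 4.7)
My plan is to construct in each case an explicit linear map and verify that it is an isometric isomorphism. Parts (i) and (iii) are elementary, relying only on the fact that operators from $\ell_1^n$ are determined by their values on the standard basis and operators into $\ell_\infty^n$ are determined by their coordinate projections. Part (ii) is the substantive one and will require upgrading weak*-continuity to norm continuity using the compactness of $T$.

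For part (i), I would define $\phi : \mathbb{L}(\ell_1^n, \mathbb{Y}) \to \ell_\infty^n(\mathbb{Y})$ by $\phi(T) = (Te_1, \ldots, Te_n)$, where $\{e_i\}$ is the standard basis of $\ell_1^n$. Linearity is immediate, surjectivity is automatic (any $n$-tuple extends linearly from the basis), and the isometry identity $\|T\| = \max_i \|Te_i\|$ follows in one direction from $\|Tx\| \leq \sum_i |a_i|\|Te_i\| \leq \max_i \|Te_i\| \cdot \|x\|_1$ for $x = \sum a_i e_i$, and in the other direction from evaluating at each basis vector. For part (iii), set $\phi : \mathbb{L}(\mathbb{X}, \ell_\infty^n) \to \ell_\infty^n(\mathbb{X}^*)$ by $\phi(T) = (\pi_1 \circ T, \ldots, \pi_n \circ T)$, where $\pi_i$ is the $i$-th coordinate functional on $\ell_\infty^n$. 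Here the isometry $\|T\| = \max_i \|\pi_i \circ T\|$ follows from $\|Tx\|_\infty = \max_i |\pi_i(Tx)|$ and interchanging the two suprema, while surjectivity is again immediate since any tuple $(x_1^*, \ldots, x_n^*)$ assembles into $T(x) = (x_1^*(x), \ldots, x_n^*(x))$.

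For part (ii), I would define $\phi : \mathbb{K}(\mathbb{X}, C(K)) \to C(K, \mathbb{X}^*)$ by $\phi(T) = f_T$, where $f_T(k) = \delta_k \circ T$ and $\delta_k$ is the point-evaluation functional at $k$. Linearity of $\phi$ is routine, and the isometry identity follows from
\[
\|T\| = \sup_{x \in B_\mathbb{X}} \sup_{k \in K} |(Tx)(k)| = \sup_{k \in K} \sup_{x \in B_\mathbb{X}} |f_T(k)(x)| = \sup_{k \in K} \|f_T(k)\| = \|f_T\|.
\]
The main obstacle is showing that $f_T$ is continuous in the \emph{norm} topology of $\mathbb{X}^*$; weak*-continuity is automatic because $k \mapsto (Tx)(k)$ is continuous for each fixed $x$. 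This is precisely where compactness of $T$ enters: since $T(B_\mathbb{X})$ is relatively compact in $C(K)$, the Arzel\`a-Ascoli theorem yields equicontinuity, so $k_n \to k$ implies $(Tx)(k_n) \to (Tx)(k)$ uniformly for $x \in B_\mathbb{X}$, which is precisely $\|f_T(k_n) - f_T(k)\| \to 0$. For surjectivity, given $f \in C(K, \mathbb{X}^*)$, I would define $T$ by $(Tx)(k) = f(k)(x)$; uniform continuity of $f$ on the compact space $K$ then gives equicontinuity of $T(B_\mathbb{X})$, and a second application of Arzel\`a-Ascoli confirms that $T$ is compact, completing the proof.
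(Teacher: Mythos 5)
Your proposal is correct, and for parts (ii) and (iii) it takes a genuinely different route from the paper. Part (i) is essentially identical to the paper's argument (the paper phrases the reverse inequality via $Ext(B_{\ell_1^n})=\{\pm e_i\}$ rather than the triangle inequality, but this is cosmetic). For part (ii), the paper does not prove anything: it simply cites the classical representation theorem for compact operators into $C(K)$ from Dunford--Schwartz (Th.~1, p.~490) and records the explicit isomorphism $\phi(T)(k)=T^*(\delta_k)$. Your proof supplies exactly the content of that citation, with the two applications of Arzel\`a--Ascoli (relative compactness of $T(B_{\mathbb{X}})$ $\Rightarrow$ equicontinuity $\Rightarrow$ norm-continuity of $f_T$, and conversely uniform continuity of $f$ on compact $K$ $\Rightarrow$ equicontinuity and boundedness of $T(B_{\mathbb{X}})$ $\Rightarrow$ compactness of $T$) being precisely the right mechanism; this buys self-containedness at the cost of length. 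For part (iii), the paper deduces the statement from (ii) by taking $|K|=n$, whereas your direct coordinate-functional argument is more elementary and avoids routing a statement about \emph{all} bounded operators through a theorem about \emph{compact} operators (harmless here since every operator into $\ell_\infty^n$ is compact, but your version makes that a non-issue). One small caution: since $K$ is only assumed compact Hausdorff, not metrizable, your continuity arguments phrased with sequences $k_n\to k$ should be stated with nets or, better, directly with the neighborhood formulation of equicontinuity; the argument goes through verbatim once rephrased.
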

	
	\begin{proof}
		(i) Define $\phi: \mathbb{L}(\ell_{1}^n, \mathbb{Y}) \to \ell_{\infty}^n(\mathbb{Y})$ as  
		$$\phi(T)= (Te_1, Te_2, \ldots, Te_n),$$
		 where $\{e_1, e_2, \ldots, e_n\}$ is the standard ordered basis of $\ell_{1}^n.$ It is clear that $\phi $ is linear. Observe that for any $T \in \mathbb{L}(\ell_1^n, \mathbb{X}),$ $$\|\phi (T)\|= \|(Te_1, Te_2, \ldots, Te_n)\|= \sup \{\|Te_i\|: i \in \{1, 2, \ldots, n\}\}.$$ Since $Ext(B_{\ell_{1}^n})= \{ \pm e_1,\pm  e_2, \ldots, \pm e_n\},$  it is straightforward to observe that
			 $$ \|T\|= \sup \{\|Tx\|: x \in S_{\ell_{1}^n}\}=\sup \{\|Te_i\|: i \in \{1, 2, \ldots, n\}\}.$$ 
		This implies $\|\phi(T)\|= \|T\|.$ So, $\phi$ is an isometry. So to prove (i), we only need to show that $\phi $ is surjective. Let $(x_1, x_2, \ldots, x_n) \in \ell_{\infty}^n (\mathbb{Y}).$ We finish the proof by defining a linear operator $T: \ell_{1}^n \to \mathbb{Y}$ given by $T(e_i)= x_i,$ for any $i, 1 \leq i \leq n.$ \\

			(ii) Follows directly from \cite[Th.1 (p. 490)]{DS}. Indeed, the isometric  isomorphism  $\phi$ between $\mathbb{K}(\mathbb{X}, C(K))$ and $C(K, \mathbb{X}^*)$ is  defined as follows : 
			for  $T \in \mathbb{K}(\mathbb{X}, C(K)),$ $\phi(T)(k)(x)=Tx(k)= T^*(\delta_{k})(x),  \forall k \in K, x \in \mathbb{X},$ where $\delta_k$ is the evaluation map defined on $C(K)$ as $\delta_{k}(f)= f(k), \forall f \in C(K).$ \\

		(iii) This follows immediately from (ii) by taking $|K|=n.$\\

	\end{proof}

In the following theorem, we present a complete characterization of left and right symmetric operators in the space $\mathbb{L}(\ell_{1}^n, \mathbb{X}).$
	
	\begin{theorem}
		Let $\mathbb{X}$ be a Banach space and let $T \in \mathbb{L}( \ell_{1}^n, \mathbb{X}).$  Then
		
			\begin{itemize}
			
			\item[(i)] $T \in S_{\mathbb{L}( \ell_{1}^n, \mathbb{X})}$ is left symmetric if and only if there exists $i_0 \in \{1, 2, \ldots, n\} $ such that the following hold:
			\subitem(a)	  $T(e_{i_0}) \in S_{\mathbb{X}},$ 
				\subitem(b)  $T(e_j)=0,$  $\forall j \in \{1, 2, \ldots, n\} \setminus \{i_0	\},$
			\subitem(c)  $T(e_{i_0})$ is a left symmetric point,
		
			\item[(ii)] $T \in S_{\mathbb{L}( \ell_{1}^n, \mathbb{X})}$ is right symmetric if only if for each $i, 1 \leq i \leq n,$ $T(e_i) \in S_{{\mathbb{X}}}$ and $Te_i$  is  right symmetric,

			\item[(iii)] $T$ is symmetric if and only if $T$ is the zero operator,
		\end{itemize}
		where for $1 \leq i \leq n, $ $e_i= (0,0, \ldots, \underset{i\text{-th}}{1}, 0, \ldots, 0) \in \ell_{1}^n.$
	\end{theorem}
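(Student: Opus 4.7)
The plan is to transport the problem from $\mathbb{L}(\ell_1^n, \mathbb{X})$ to the sequence space $\ell_\infty^n(\mathbb{X})$ via the onto linear isometry
\[
\phi: \mathbb{L}(\ell_1^n, \mathbb{X}) \longrightarrow \ell_\infty^n(\mathbb{X}), \qquad \phi(T) = (Te_1, Te_2, \ldots, Te_n),
\]
supplied by Proposition \ref{prop:isometric}(i). By Proposition \ref{prop:isometry}, both left symmetricity and right symmetricity are preserved by onto linear isometries, so $T$ is left (respectively right, or two-sided) symmetric in $\mathbb{L}(\ell_1^n, \mathbb{X})$ if and only if $\phi(T)$ has the corresponding property in $\ell_\infty^n(\mathbb{X}) = \ell_\infty(\{1,2,\ldots,n\}, \mathbb{X})$. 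This reduces each of (i)--(iii) to the analogous characterization already obtained in Section-I.

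For (i), I would apply Theorem \ref{left} with $K = \{1, 2, \ldots, n\}$: the tuple $\phi(T) \in S_{\ell_\infty^n(\mathbb{X})}$ is left symmetric precisely when there exists some $i_0$ with $Te_{i_0} \in S_\mathbb{X}$ left symmetric and $Te_j = 0$ for every $j \neq i_0$. Reading this back through $\phi$ yields conditions (a)--(c).

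For (ii), recall that Section-II fixes the real scalar field and that $\{1, 2, \ldots, n\}$ is a finite, hence compact, Hausdorff space; Theorem \ref{right:sufficient} therefore applies directly to $\phi(T)$ and produces the stated equivalence that each $Te_i$ must belong to $S_\mathbb{X}$ and be a right symmetric point of $\mathbb{X}$. For (iii), if $T$ were a non-zero symmetric operator then $T/\|T\|$ would simultaneously satisfy (i) and (ii); but (i) forces all but one $Te_j$ to vanish while (ii) forces every $Te_j$ to have norm one, an impossibility once $n \geq 2$. Conversely, $T = 0$ is trivially symmetric. No step should present a serious obstacle; the real conceptual work has already been absorbed into the function-space results of Section-I and the isometric identification of Proposition \ref{prop:isometric}(i), so what remains here is a clean translation exercise through $\phi$.
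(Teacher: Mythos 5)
Your proposal is correct and follows essentially the same route as the paper: the paper also transports the problem through the isometric isomorphism $\phi(T)=(Te_1,\ldots,Te_n)$ onto $\ell_\infty^n(\mathbb{X})$ and invokes the function-space characterizations, merely packaged as Corollary \ref{directsum} (itself the specialization of Theorem \ref{left} and Theorem \ref{right:sufficient} to finite $K$). Your direct appeal to those two theorems, plus the observation that (i) and (ii) are incompatible for $n\geq 2$, is exactly the intended argument.
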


\begin{proof}
	From Proposition \ref{prop:isometric}(i), $\mathbb{L}(\ell_{1}^n, \mathbb{X})$ is isometrically isomorphic to $\ell_{\infty}^n(\mathbb{X}).$ Moreover, $\phi: \mathbb{L}(\ell_{1}^n, \mathbb{X}) \to \ell_{\infty}^n(\mathbb{X})$ defined as  $\phi(T)= (Te_1, Te_2, \ldots, Te_n)$ is the isometric isomorphism. Since  left (right) symmetricity is preserved under isometric isomorphism, the desired result follows easily from Corollary \ref{directsum}.
\end{proof}

\begin{remark}
		In \cite{KRS}, the symmetric operators have been studied in the space $\mathbb{L}(\ell_{1}^n, \mathbb{X})$ and a complete characterization of left symmetric operators has been obtained  \cite[Th 3.6]{KRS}, whereas for the right symmetric operators, only a necessary condition has been found  \cite[Th 3.7]{KRS}.	
	It should be  noted that for the necessary condition obtained in \cite[Th. 3.7]{KRS} for right symmetric operators, we observe that it is also sufficient.
\end{remark}

In the following two results, we characterize the left symmetric operators and the right symmetric operators in the space $\mathbb{K}( \mathbb{X}, C(K)).$

\begin{prop}\label{left1}
	Let $K$ be a compact, perfectly normal space and let $\mathbb{X}$ be a Banach space. Let  $T \in S_{\mathbb{K}(\mathbb{X}, C(K))}.$ Then   $T$ is left symmetric if and only if $T$ satisfies the following: 
	\begin{itemize}
		\item[(i)] there exists exactly one point $k_0 \in K$ such that $T^*(\delta_{k_0}) \in S_{\mathbb{X}}$ and $T^*(\delta_{k})=0,$  $\forall k \in K \setminus \{k_0\},$
		\item[(ii)] $T^*(\delta_{k_0})$ is a left symmetric point in $\mathbb{X}^*,$
	\end{itemize} 
	where $\delta_k \in (C(K))^*$ such that  $\delta_k(f)=f(k),$ for any $f \in C(K).$
	Moreover, if $K$ does not contain any isolated point then $T$ is left symmetric if and only $T$ is the zero operator. 
\end{prop}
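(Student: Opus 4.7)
The plan is to reduce the problem to the already-established characterization of left symmetric elements in $C(K, \mathbb{X}^*)$ via the isometric isomorphism given in Proposition \ref{prop:isometric}(ii). Recall that the map $\phi : \mathbb{K}(\mathbb{X}, C(K)) \to C(K, \mathbb{X}^*)$ defined by $\phi(T)(k) = T^*(\delta_k)$ is an isometric isomorphism. Since left symmetricity is an isometric invariant (this follows for instance from Proposition \ref{prop:isometry}), $T$ is left symmetric in $\mathbb{K}(\mathbb{X}, C(K))$ if and only if $\phi(T)$ is left symmetric in $C(K, \mathbb{X}^*)$.

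Next, since $K$ is compact we have $C_0(K, \mathbb{X}^*) = C(K, \mathbb{X}^*)$, so Theorem \ref{perfect} (applied with the Banach space $\mathbb{X}^*$ in place of $\mathbb{X}$, using that $K$ is locally compact and perfectly normal) yields: $\phi(T)$ is left symmetric if and only if there exists exactly one $k_0 \in K$ with $\phi(T)(k_0) \in S_{\mathbb{X}^*}$ and $\phi(T)(k) = 0$ for every $k \in K \setminus \{k_0\}$, and moreover $\phi(T)(k_0)$ is left symmetric in $\mathbb{X}^*$. Translating back through $\phi(T)(k) = T^*(\delta_k)$ gives conditions (i) and (ii) of the proposition exactly.

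For the ``moreover'' part, suppose $K$ has no isolated points and $T$ is left symmetric. If $T \neq 0$, then by the first part there is a unique $k_0 \in K$ with $\phi(T)(k_0) \in S_{\mathbb{X}^*}$ and $\phi(T)(k) = 0$ for all $k \neq k_0$. Since $k_0$ is not isolated, every neighbourhood of $k_0$ contains points $k \neq k_0$ at which $\phi(T)(k) = 0$; continuity of $\phi(T)$ at $k_0$ forces $\phi(T)(k_0) = 0$, contradicting $\|\phi(T)(k_0)\| = 1$. Hence $T = 0$, and the converse is trivial.

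I expect no real obstacle here: the entire argument is a transport of structure via the isomorphism $\phi$, and all the heavy lifting is already encoded in Theorem \ref{perfect} and Proposition \ref{prop:isometric}(ii). The only mild subtlety is to verify that the invariance of left symmetricity under isometric isomorphism (Proposition \ref{prop:isometry}) is used correctly and that the compactness of $K$ is exactly what is needed to identify $C_0(K, \mathbb{X}^*)$ with $C(K, \mathbb{X}^*)$ so that Theorem \ref{perfect} is directly applicable.
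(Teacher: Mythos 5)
Your proposal is correct and follows essentially the same route as the paper: transport the problem to $C(K,\mathbb{X}^*)$ via the isometric isomorphism $\phi(T)(k)=T^*(\delta_k)$ from Proposition \ref{prop:isometric}(ii) and then apply Theorem \ref{perfect} (noting $C_0(K,\mathbb{X}^*)=C(K,\mathbb{X}^*)$ for compact $K$). Your explicit continuity argument for the ``moreover'' clause is a welcome addition the paper leaves implicit.
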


\begin{proof}
		From Proposition \ref{prop:isometric}(ii), $\mathbb{K}( \mathbb{X}, C(K))$ is isometrically isomorphic to $ C(K, \mathbb{X}^*).$ Moreover, $\phi: \mathbb{K}( \mathbb{X}, C(K)) \to C(K, \mathbb{X}^*)$ given by  $\phi(T)(k)= T^*(\delta_{k})$ is the isometric isomorphism.  Hence applying Theorem \ref{perfect}, the result follows easily. 
\end{proof}

\begin{prop}
	Let $K$ be a compact Hausdorff space and $\mathbb{X}$ be a Banach space. Let $T \in S_{\mathbb{K}(\mathbb{X}, C(K))}.$ Then $T$ is a right symmetric operator if and only if the following hold: 
	\begin{itemize}
		\item[(i)]  $T^*(\delta_k) \in S_{\mathbb{X}^*},$ for any $k \in K,$
		\item[(ii)] $T^*(\delta_{k})$ is right symmetric point of $S_{\mathbb{X}^*},$ $ \forall k \in K,$
	\end{itemize} 
	where $\delta_k \in (C(K))^*$ such that  $\delta_k(f)=f(k),$ for any $f \in C(K).$
\end{prop}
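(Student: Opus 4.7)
The plan is essentially to mirror the proof of Proposition \ref{left1}: transport the problem from $\mathbb{K}(\mathbb{X},C(K))$ to $C(K,\mathbb{X}^*)$ via the isometric isomorphism of Proposition \ref{prop:isometric}(ii), and then invoke the characterization of right symmetric functions in $C(K,\mathbb{X}^*)$ established in Theorem \ref{right:sufficient}.

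First, I would recall from Proposition \ref{prop:isometric}(ii) that the map $\phi : \mathbb{K}(\mathbb{X},C(K)) \to C(K,\mathbb{X}^*)$ defined by $\phi(T)(k)(x) = Tx(k) = T^*(\delta_k)(x)$ is a surjective linear isometry, so in particular $\phi(T)(k) = T^*(\delta_k)$ for every $k \in K$, and $\|\phi(T)\| = \|T\|$. Since Birkhoff-James orthogonality is preserved under isometric isomorphisms, the analogue of Proposition \ref{prop:isometry} gives that $T$ is right symmetric in $\mathbb{K}(\mathbb{X},C(K))$ if and only if $\phi(T)$ is right symmetric in $C(K,\mathbb{X}^*)$.

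Next, viewing $\mathbb{X}^*$ as the real Banach space in question (which is the standing hypothesis of Section-II), I would apply Theorem \ref{right:sufficient} to $f := \phi(T) \in S_{C(K,\mathbb{X}^*)}$: the function $f$ is right symmetric if and only if $f(k) \in S_{\mathbb{X}^*}$ for every $k \in K$ and each $f(k)$ is a right symmetric point of $S_{\mathbb{X}^*}$. Translating these two conditions through $\phi$ yields exactly $T^*(\delta_k) \in S_{\mathbb{X}^*}$ and $T^*(\delta_k)$ right symmetric in $\mathbb{X}^*$ for every $k \in K$, which are conditions (i) and (ii) of the statement.

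There is no real obstacle here; the only delicate points are to make sure that (a) the isometric isomorphism genuinely carries right symmetric elements to right symmetric elements (which is immediate because $\phi$ preserves the norm and hence Birkhoff--James orthogonality in both directions), and (b) that Theorem \ref{right:sufficient} is applied with the correct underlying space $\mathbb{X}^*$. Once these two observations are in place, the equivalence follows at once and the proof essentially reduces to citing Proposition \ref{prop:isometric}(ii) and Theorem \ref{right:sufficient}.
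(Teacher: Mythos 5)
Your proposal is correct and is exactly the paper's argument: the paper also proves this by transporting $T$ to $\phi(T)\in C(K,\mathbb{X}^*)$ via the isometric isomorphism of Proposition \ref{prop:isometric}(ii) and then invoking Theorem \ref{right:sufficient} (the paper simply says the proof is "in the same line" as Proposition \ref{left1} and omits the details). Your remark about needing $\mathbb{X}^*$ to be real for Theorem \ref{right:sufficient} is a fair point of care that the paper glosses over via its standing Section-II convention.
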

\begin{proof}
The proof is in the same line as that of Proposition \ref{left1}, and is therefore omitted. 
\end{proof}

	Next we provide an explicit form of  the left and right symmetric operators in $\mathbb{L}(\mathbb{X}, \ell_{\infty}^n).$

	\begin{theorem}\label{left:infty}
		Let $\mathbb{X}$ be any Banach space and let $T \in S_{\mathbb{L}(\mathbb{X}, \ell_{\infty}^n)}.$ Then
		\begin{itemize}
			\item[(i)] $T$ is left symmetric if and only if there exists $j \in \{1, 2, \ldots, n\}$  such that for any $x \in \mathbb{X},$ $Tx=(0, \ldots, 0, \underset{j\text{-th}}{f(x)}, 0, \ldots, 0),$  where $f \in S_{\mathbb{X}^*}$ is a  left symmetric point of $ \mathbb{X}^*$.  
				\item[(ii)] $T$ is right symmetric if and only if  for any $x \in \mathbb{X},$ $T(x) =(f_1(x), f_2(x), \ldots, f_n(x)),$ where  each $f_i \in S_{\mathbb{X}^*}$ is a right symmetric point of $\mathbb{X}^*,$ for any $1\leq i \leq n.$
				\item[(iii)] $T$ is symmetric if and only if $T$ is the zero operator.
		\end{itemize}  
	\end{theorem}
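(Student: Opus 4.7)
The plan is to reduce the question in $\mathbb{L}(\mathbb{X}, \ell_{\infty}^n)$ to the corresponding one in $\ell_{\infty}^n(\mathbb{X}^*)$ via the isometric isomorphism of Proposition \ref{prop:isometric}(iii), and then invoke Corollary \ref{directsum} together with Proposition \ref{prop:isometry}. Concretely, any $T \in \mathbb{L}(\mathbb{X}, \ell_{\infty}^n)$ has the form $Tx = (f_1(x), f_2(x), \ldots, f_n(x))$ for uniquely determined $f_1, \ldots, f_n \in \mathbb{X}^*$, and the map $\phi(T) = (f_1, f_2, \ldots, f_n)$ is precisely the isometric isomorphism supplied by Proposition \ref{prop:isometric}(iii) (specialised from $C(K,\mathbb{X}^*)$ with $|K|=n$; note that finite-rank range makes $\mathbb{K}(\mathbb{X},\ell_\infty^n) = \mathbb{L}(\mathbb{X},\ell_\infty^n)$). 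Since $\phi$ is surjective and isometric, Proposition \ref{prop:isometry} ensures that $T$ is left symmetric (respectively right symmetric, symmetric) in $\mathbb{L}(\mathbb{X},\ell_\infty^n)$ if and only if $\phi(T)$ is left symmetric (respectively right symmetric, symmetric) in $\ell_\infty^n(\mathbb{X}^*)$.

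Having made this transfer, the three statements fall out directly from Corollary \ref{directsum} applied to $\phi(T) = (f_1, \ldots, f_n) \in S_{\ell_\infty^n(\mathbb{X}^*)}$. For (i), left symmetricity of $\phi(T)$ is characterised by the existence of a unique index $j$ with $f_j \in S_{\mathbb{X}^*}$ left symmetric and $f_i = 0$ for $i \neq j$; translating back through $\phi^{-1}$ yields exactly the stated form of $T$. For (ii), we are working in Section-II where the underlying space is real, so $\mathbb{X}^*$ is real as well, and Corollary \ref{directsum}(ii) applies to give that $\phi(T)$ is right symmetric iff every $f_i \in S_{\mathbb{X}^*}$ is a right symmetric point. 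For (iii), Corollary \ref{directsum}(iii) gives $\phi(T) = 0$, equivalently $T = 0$.

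The only mild obstacles are bookkeeping items rather than genuinely hard steps: one must verify carefully that $\phi$ as described coincides with the isomorphism of Proposition \ref{prop:isometric}(iii) (which is most transparent by identifying the evaluation functionals $\delta_k$ on $\ell_\infty^n$ with the coordinate functionals, so that $T^*(\delta_i) = f_i$), and one should note that an isometric isomorphism trivially preserves the set of norm-one elements and the Birkhoff--James orthogonality relation, so Proposition \ref{prop:isometry} gives the desired equivalence of symmetric behaviour. With these remarks in place, each of (i), (ii), (iii) follows in a single line from the corresponding item of Corollary \ref{directsum}, so the proof is essentially a synthesis of the earlier results rather than a new argument.
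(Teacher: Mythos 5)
Your proposal is correct and follows essentially the same route as the paper: identify $\mathbb{L}(\mathbb{X},\ell_{\infty}^n)$ with $\ell_{\infty}^n(\mathbb{X}^*)$ via $\phi(T)=(T^*e_1,\ldots,T^*e_n)$ using Proposition \ref{prop:isometric}(iii), transfer symmetricity through the onto isometry, and read off all three items from Corollary \ref{directsum}. The only cosmetic difference is that the paper explicitly unwinds $Tx=(u_1(x),\ldots,u_n(x))$ to identify $u_j=T^*(e_j)$, which you handle by the same $\delta_k$-identification remark.
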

	
	\begin{proof}
	Observe that $Ext(B_{(\ell_{\infty}^n)^*})= \{\pm e_i:  1 \leq i \leq n\},$ where $e_i(x)=x_i,$ for any $x=(x_1, x_2, \ldots, x_n)\in \ell_{\infty}^n.$	From Proposition \ref{prop:isometric}, $\mathbb{L}(\mathbb{X}, \ell_{\infty}^n)$ is isometrically isomorphic to $\ell_{\infty}^n(\mathbb{X}^*).$ Moreover, if $\phi$ is the isometric isomorphism between $\mathbb{L}(\mathbb{X}, \ell_{\infty}^n)$ and    $\ell_{\infty}^n(\mathbb{X}^*),$ then $\phi(T)=(T^*e_1, T^* e_2, \ldots, T^* e_n).$ 
		
		(i) From Corollary \ref{directsum}, we obtain that $\phi(T)$ is left symmetric if and only if there exists $j \in \{1, 2, \ldots, n\} $ such that $T^*(e_{j}) \in S_{\mathbb{X}^*}$ is a left symmetric point and $T^*(e_i)=0,$ for any $i\in \{1,2,\ldots,n\} \setminus \{ j\}.$  Suppose that  $T^*(e_{j})=f.$
		Let for any $x \in \mathbb{X},$ $Tx= (u_1(x), u_2(x), \ldots, u_n(x)),$ where $u_i \in \mathbb{X}^*.$ Then $u_{j}(x)=e_{j}(Tx)= T^*(e_{j})(x)=f(x)$ and $u_i(x)=e_i(Tx)= T^*(e_i)(x)= 0,$ whenever $i \neq j.$ Thus we obtain (i).
		
		(ii) Proceeding similarly as in (i)  and applying Corollary \ref{directsum}, we obtain (ii). 
		
		(iii) Follows directly from (i) and (ii).
	\end{proof}

\begin{remark}
	It is clear from the previous result that $\mathbb{L}(\mathbb{X}, \ell_{\infty}^n)$ has no non-zero left symmetric point  if $\mathbb{X}^*$ does not have any non-zero left symmetric point. As $(\ell_{\infty}^n)^* = \ell_1^n$ and $\ell_{1}^n$ has no non-zero left symmetric point for $ n >2$   (see   \cite[Th. 2.8]{CSS}),
	  there does not exist any non-zero left symmetric operator  in $\mathbb{L}(\ell_{\infty}^n),$ whenever $n >2.$
\end{remark}	

  From Theorem \ref{left:infty} and by using the characterization of left symmetric points of $\ell_p^n, p \neq 1,2, \infty$ (see  \cite[Th. 2.7]{CSS}), we classify    left symmetric operators in $\mathbb{L}(\ell_p^n, \ell_{\infty}^n)$ in the following result.
	
	\begin{cor}\label{left:l_p}
		Let $T \in \mathbb{L}(\ell_p^m, \ell_{\infty}^n)$ with $\|T\|=1.$ Then $T$ is left symmetric if and only if there exists $j \in \{1, 2, \ldots, n\}$ such that $T(x_1, x_2, \ldots, x_m)=(0, 0, \ldots, u_{j}, 0, \ldots, 0),$ where $u_{j}$ is one of the two following:
		\begin{itemize}
			\item[(i)] $u_{j}= x_k,$ for some $k \in \{1, 2, \ldots, m\}.$
			\item[(ii)] $u_{j}= \pm \frac{1}{2^{\frac{1}{q}}} x_{k} \pm \frac{1}{2^{\frac{1}{q}}} x_{l}, $ for some $k,l \in \{1, 2, \ldots, m\}.$ 
		\end{itemize}
	\end{cor}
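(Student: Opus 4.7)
The plan is to deduce the corollary directly from Theorem \ref{left:infty}(i) together with the known description of left symmetric points of $\ell_p^n$ spaces from \cite[Th. 2.7]{CSS}. Since Theorem \ref{left:infty}(i) has already characterized left symmetric operators in $\mathbb{L}(\mathbb{X}, \ell_{\infty}^n)$ for an arbitrary Banach space $\mathbb{X}$ in terms of left symmetric functionals on $\mathbb{X}$, the only remaining task is to specialize $\mathbb{X} = \ell_p^m$ and make the dual identification explicit.

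First, I would invoke Theorem \ref{left:infty}(i) to obtain that $T \in S_{\mathbb{L}(\ell_p^m, \ell_{\infty}^n)}$ is left symmetric if and only if there is some $j \in \{1,2,\ldots,n\}$ and a left symmetric functional $f \in S_{(\ell_p^m)^*}$ such that
\[
T(x_1,\ldots,x_m) = (0,\ldots, 0, \underset{j\text{-th}}{f(x_1,\ldots,x_m)}, 0,\ldots, 0).
\]
Thus the problem reduces to describing the left symmetric points of the unit sphere of the dual space $(\ell_p^m)^*$.

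Next, under the standard isometric identification $(\ell_p^m)^* \equiv \ell_q^m$ (where $\frac{1}{p} + \frac{1}{q} = 1$) via the pairing $(a_1,\ldots,a_m) \longleftrightarrow \big((x_1,\ldots,x_m) \mapsto \sum_{i=1}^{m} a_i x_i\big)$, the assumption $p \neq 1, 2, \infty$ ensures $q \neq 1, 2, \infty$, so that \cite[Th. 2.7]{CSS} applies to $\ell_q^m$. According to that characterization, the left symmetric points of $S_{\ell_q^m}$ are precisely the signed standard basis vectors $\pm e_k$ ($1 \leq k \leq m$) and the vectors of the form $\pm \frac{1}{2^{1/q}} e_k \pm \frac{1}{2^{1/q}} e_l$ ($1 \leq k, l \leq m$). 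Transporting back through the duality, this means that $f$ acts on $(x_1, \ldots, x_m) \in \ell_p^m$ either as $\pm x_k$ for some $k$, or as $\pm \frac{1}{2^{1/q}} x_k \pm \frac{1}{2^{1/q}} x_l$ for some $k,l$.

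Finally, substituting these expressions into the $j$-th coordinate from the formula above yields precisely the two forms for $u_j$ in the statement of the corollary. The argument is essentially a routine translation between an abstract characterization and an explicit coordinate description; the only point requiring care is the correct identification of $(\ell_p^m)^*$ with $\ell_q^m$ and the observation that, because both $T$ and $-T$ are left symmetric simultaneously, the leading sign in case (i) can be absorbed without loss of generality. I do not anticipate a genuine obstacle, since the deeper work has already been done in Theorem \ref{left:infty} and in \cite[Th. 2.7]{CSS}.
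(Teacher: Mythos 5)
Your proposal is correct and follows essentially the same route as the paper: invoke Theorem \ref{left:infty}(i), identify $(\ell_p^m)^*$ with $\ell_q^m$, and read off the left symmetric points from \cite[Th. 2.7]{CSS}. Your extra remark about absorbing the sign in case (i) is a reasonable clarification of a detail the paper glosses over, but the substance of the argument is identical.
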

	
	\begin{proof}
		From Theorem \ref{left:infty}, $T$ is left symmetric if and only if there exists $j \in \{1, 2, \ldots, n\}$ such that for any $x \in \ell_p^m,$ $Tx=(0,0, \ldots, f(x), 0, \ldots, 0),$ where $f \in S_{(\ell_p^m)^*}$ is a left symmetric point of $(\ell_p^m)^*.$ As $(\ell_p^m)^*$ is isometrically isomorphic to $\ell_q^m,$  $f \in S_{(\ell_p^m)^*}$ is exactly of one of the following forms \cite[Th. 2.7]{CSS}:
		\begin{itemize}
			\item[(i)] $f(x_1, x_2, \ldots, x_n)= x_j,$ for some $j, 1\leq j \leq m $
			\item[(ii)] $f(x_1, x_2, \ldots, x_n)= \pm \frac{1}{2^{\frac{1}{q}}} x_{j} \pm \frac{1}{2^{\frac{1}{q}}} x_{k}, $ for some $j,k \in \{1, 2, \ldots, m\}.$ 
		\end{itemize}
		
	\end{proof}

Using Theorem \ref{left:infty} and the characterization of right symmetric points of $\ell_p^n$, $p \neq 1,2, \infty$ (see  \cite[Th. 2.7]{CSS}), the right symmetric operators in the space $\mathbb{L}(\ell_p^m, \ell_{\infty}^n)$ can be expressed explicitly in the following way.
The proof is omitted as it is in the same line as that of Corollary \ref{left:l_p}. 
\begin{cor}
		Let $T \in \mathbb{L}(\ell_p^m, \ell_{\infty}^n)$ with $\|T\|=1.$ Then $T$ is right symmetric if and only if  $T(x_1, x_2, \ldots, x_m)=(u_1, u_2, \ldots, u_n),$ where each $u_{j}$ is one of the following:
	\begin{itemize}
		\item[(i)] $u_{j}= x_k,$ for some $k \in \{1, 2, \ldots, m\}. $
		\item[(ii)] $u_{j}= \pm \frac{1}{2^{\frac{1}{q}}} x_{k} \pm \frac{1}{2^{\frac{1}{q}}} x_{l}, $ for some $k,l \in \{1, 2, \ldots, m\}.$ 
	\end{itemize}
\end{cor}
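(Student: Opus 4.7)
The plan is to mimic the structure of the proof of Corollary \ref{left:l_p}, replacing the role of left symmetric points with right symmetric points throughout. By Theorem \ref{left:infty}(ii), an operator $T \in S_{\mathbb{L}(\ell_p^m, \ell_\infty^n)}$ is right symmetric if and only if $T$ has the form $T(x) = (f_1(x), f_2(x), \ldots, f_n(x))$ for all $x \in \ell_p^m$, where each $f_i \in S_{(\ell_p^m)^*}$ is a right symmetric point of the dual space. So the task reduces to translating the known classification of right symmetric points of $(\ell_p^m)^*$ into the coordinate form stated in the corollary.

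Next, I would use the standard isometric identification $(\ell_p^m)^* \cong \ell_q^m$, where $1/p + 1/q = 1$. Under this identification, a functional $f \in S_{(\ell_p^m)^*}$ acts on $x = (x_1, \ldots, x_m)$ via an inner-product-type pairing with some coefficient vector in $S_{\ell_q^m}$. The classification \cite[Th.~2.7]{CSS} of right symmetric points of $S_{\ell_q^m}$ (for $q \neq 1, 2, \infty$, equivalently $p \neq 1, 2, \infty$) yields exactly two forms: the standard basis vectors $\pm e_k$, and the normalized two-term vectors $\pm \frac{1}{2^{1/q}}(e_k \pm e_l)$ for distinct $k, l$. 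Translating these back to the action on $x$, we get either $f(x) = \pm x_k$ or $f(x) = \pm \frac{1}{2^{1/q}} x_k \pm \frac{1}{2^{1/q}} x_l$.

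Plugging this into the $n$-tuple representation of $T$ from Theorem \ref{left:infty}(ii) immediately yields the claimed coordinate description of $T(x_1, \ldots, x_m) = (u_1, \ldots, u_n)$, where each $u_j$ takes one of the two listed forms. Conversely, any $T$ of this form has each coordinate functional equal to a right symmetric point of $S_{(\ell_p^m)^*}$, so Theorem \ref{left:infty}(ii) gives right symmetricity.

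There is no serious obstacle here: the only real ingredients are (a) Theorem \ref{left:infty}(ii), which is already proved, and (b) the description of right symmetric points of $\ell_q^m$ quoted from \cite[Th.~2.7]{CSS}. The minor bookkeeping is to ensure the sign choices $\pm$ in the formulas are absorbed without loss of generality (since if $f$ is right symmetric so is $-f$), and to note that the absolute value in $|x_k|$ is unnecessary because replacing $f$ by $-f$ already covers both signs. This is precisely why the proof can be omitted in the spirit of Corollary \ref{left:l_p}.
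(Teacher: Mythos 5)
Your proposal is correct and follows exactly the route the paper intends: the paper omits this proof, stating it is "in the same line as that of Corollary \ref{left:l_p}," namely reduce via Theorem \ref{left:infty}(ii) to each coordinate functional being a right symmetric point of $(\ell_p^m)^*\cong \ell_q^m$ and then invoke the classification from \cite[Th.~2.7]{CSS}. No substantive difference from the paper's (implicit) argument.
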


	\begin{remark}
		The symmetricity of operators on a strictly convex Banach space space has been studied in \cite{KRS, PMW}. In \cite{PMW}, it has been shown that if $\mathbb{X},\mathbb{Y}$ are both reflexive strictly convex spaces then the zero operator is the only left symmetric operator.
		In \cite[Th. 3.3]{KRS}, an example of non-zero left symmetric operator has been given in $\mathbb{L}(\ell_p^2, \ell_{\infty}^2).$  To the  best of our knowledge, it is the only example of a non-zero left symmetric operator on a strictly convex Banach space. However, from Theorem \ref{left:infty}, we now have  a class of non-zero left symmetric operators on strictly convex Banach spaces,  see Corollary \ref{left:l_p}. 
	\end{remark}

We next present  necessary conditions for a rank $1$ operator to be left symmetric and  right symmetric in the space of all compact operators on reflexive Banach spaces. To do so, we need the following easy observation.

\begin{prop}\label{rank:prop}
		Let $\mathbb{X}, \mathbb{Y}$ be  Banach spaces. Let $T \in \mathbb{L}(\mathbb{X}, \mathbb{Y})$ be a $rank~1$ operator. Then $M_T=\pm F,$ for some face $F$ of $B_{\mathbb{X}}.$
\end{prop}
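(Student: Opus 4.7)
The plan is to exploit the explicit form of a rank~$1$ operator and reduce the description of $M_T$ to a level set of a single functional on the unit ball.

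First, I would write $T$ in its canonical rank~$1$ form: there exist $f \in \mathbb{X}^*$ and $y_0 \in \mathbb{Y}$ with $y_0 \neq 0$ such that $Tx = f(x)\, y_0$ for every $x \in \mathbb{X}$. Rescaling $y_0$ if necessary, one may assume $\|y_0\|=1$; then $\|Tx\| = |f(x)|$ for all $x$, so $\|T\| = \|f\|$, and without loss of generality the further normalisation $\|T\|=\|f\|=1$ is in force. This gives the clean description
\[
M_T \;=\; \{x \in S_{\mathbb{X}} : |f(x)| = 1\}.
\]
Since we are working in real Banach spaces (as announced at the start of Section-II), the condition $|f(x)|=1$ decomposes as $f(x)=1$ or $f(x)=-1$.

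Next I define the candidate face
\[
F \;:=\; \{x \in B_{\mathbb{X}} : f(x) = 1\},
\]
and verify that $F$ is a face of $B_{\mathbb{X}}$. Suppose $x, y \in B_{\mathbb{X}}$ and $t \in (0,1)$ with $tx + (1-t)y \in F$. Then $t f(x) + (1-t) f(y) = 1$, and since $f(x), f(y) \leq \|f\|=1$, the equality forces $f(x) = f(y) = 1$, so $x, y \in F$. (Note also that any $x \in F$ automatically satisfies $\|x\|=1$, because $1 = f(x) \leq \|f\| \|x\| = \|x\|$ forces $\|x\|=1$; so $F \subseteq S_{\mathbb{X}}$.) This shows $F$ is a (nonempty, by Hahn--Banach) face of $B_{\mathbb{X}}$.

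Finally, I identify $M_T$ with $F \cup (-F)$. If $x \in M_T$, then $|f(x)|=1$, so either $f(x)=1$ (giving $x \in F$) or $f(x)=-1$ (giving $f(-x)=1$, i.e., $-x \in F$, so $x \in -F$). Conversely, for $x \in F$ one has $|f(x)|=1$ and $\|x\|=1$, hence $x \in M_T$; likewise for $x \in -F$. Thus $M_T = F \cup (-F) = \pm F$, completing the proof.

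There is no serious obstacle here; the argument is essentially a repackaging of the standard fact that the support set of a norm-one linear functional on $B_{\mathbb{X}}$ is a face. The only point worth flagging is the implicit use of the real scalar field (so that $|f(x)|=1$ bifurcates into two affine conditions); in the complex setting one would instead obtain $M_T$ as a union of rotations of a single face.
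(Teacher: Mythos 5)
The paper offers no proof of this proposition at all --- it is labelled an ``easy observation'' and used directly in Theorem \ref{left:rank} --- so there is nothing to compare line by line; your argument is exactly the standard one the authors are leaving implicit, and its core is correct: writing $Tx=f(x)y_0$ with $\|y_0\|=\|f\|=1$ gives $M_T=\{x\in S_{\mathbb{X}}:|f(x)|=1\}$, the set $F=\{x\in B_{\mathbb{X}}:f(x)=1\}$ is a face by the strict-convexity-of-equality argument you give, and $M_T=F\cup(-F)$ in the real case. Your remark that the statement as written needs real scalars is also well taken and consistent with the paper's announcement at the start of Section-II.

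The one genuine slip is the parenthetical ``nonempty, by Hahn--Banach.'' Hahn--Banach produces, for each point of $S_{\mathbb{X}}$, a functional attaining its norm there; it does \emph{not} say that a given $f\in S_{\mathbb{X}^*}$ attains its norm at some point. In a non-reflexive space $f$ may fail to attain its norm (e.g.\ $f=(2^{-n})_n\in\ell_1=c_0^*$ on $c_0$), in which case $F$ and $M_T$ are both empty. This does not damage the identity $M_T=F\cup(-F)$, but it means you must either allow the empty set as a face or observe that the proposition is only invoked in Theorem \ref{left:rank} under reflexivity, where weak compactness of $B_{\mathbb{X}}$ guarantees $M_T\neq\emptyset$. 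Strike or repair that parenthesis and the proof is complete.
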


\begin{theorem}\label{left:rank}
	Let $\mathbb{X}, \mathbb{Y}$ be two  reflexive Banach spaces and  let $T \in \mathbb{L}(\mathbb{X}, \mathbb{Y})$ be a $rank~1$   operator. 
	\begin{itemize}
		\item [(i)] If $T$ is left symmetric then  there exists a left symmetric element $w$ in $\mathbb{Y}$ and a left symmetric functional $f$  in $\mathbb{X}^*$ such that 
		\[ Tx = f(x)w, \,\forall x \in \mathbb X.\]  
		\item [(ii)]  If $T$ is right symmetric then there exists a right symmetric element $w$ in $\mathbb{Y}$ and a right symmetric functional $f$  in $\mathbb{X}^*$ such that 
		\[ Tx = f(x)w, \,\forall x \in \mathbb X.\]  
	\end{itemize}
\end{theorem}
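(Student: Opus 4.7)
The plan is to exploit the factorization of a nonzero rank~$1$ operator: write $Tx = f(x)w$ with $f\in\mathbb{X}^*\setminus\{0\}$ and $w\in\mathbb{Y}\setminus\{0\}$, so that $\|T\|=\|f\|\,\|w\|$. The key technical observation I intend to use is this: if $S$ is another rank~$1$ operator sharing the functional factor with $T$, namely $Sx=f(x)v$, or sharing the vector factor, namely $Sx=g(x)w$, then every linear combination $T+\lambda S$ stays in the same rank~$1$ class and its operator norm factors cleanly, as $\|T+\lambda S\|=\|f\|\,\|w+\lambda v\|$ in the first case and $\|f+\lambda g\|\,\|w\|$ in the second. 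This converts Birkhoff--James orthogonality between such operators directly into Birkhoff--James orthogonality between their scalar factors.

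For part~(i), I will assume $Tx=f(x)w$ is left symmetric and argue on each factor separately. To show that $w$ is left symmetric in $\mathbb{Y}$, I take any $v\in\mathbb{Y}$ with $w\perp_B v$ and set $Sx=f(x)v$. The factorization gives $\|T+\lambda S\|=\|f\|\,\|w+\lambda v\|\geq\|f\|\,\|w\|=\|T\|$ for every scalar $\lambda$, hence $T\perp_B S$. Left symmetricity of $T$ yields $S\perp_B T$, which by the same factoring translates to $\|v+\mu w\|\geq\|v\|$ for every $\mu$, that is, $v\perp_B w$. To prove $f$ left symmetric in $\mathbb{X}^*$ I run the symmetric argument: given $g\in\mathbb{X}^*$ with $f\perp_B g$, set $Sx=g(x)w$ and the analogous chain of inequalities produces $g\perp_B f$.

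Part~(ii) will be obtained by reversing the direction of orthogonality throughout. Given $v\perp_B w$ in $\mathbb{Y}$, the operator $Sx=f(x)v$ satisfies $\|S+\mu T\|=\|f\|\,\|v+\mu w\|\geq\|S\|$, so $S\perp_B T$; right symmetricity of $T$ then gives $T\perp_B S$, which factorizes back to $w\perp_B v$. The analogous choice $Sx=g(x)w$ handles right symmetricity of $f$.

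No step in this plan is genuinely hard: once the observation on factored norms of rank~$1$ perturbations is in place, the verification is a short direct computation. I note that reflexivity of $\mathbb{X}$ and $\mathbb{Y}$ does not actually seem to be invoked in this argument, but it is consistent with Proposition~\ref{rank:prop} and the ambient setup, where it guarantees that $M_T$ and the support functionals attached to $f$ behave as expected.
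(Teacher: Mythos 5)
Your argument is correct, and it takes a genuinely different and more elementary route than the paper's. The whole proof rests on the exact factorization $\|f\otimes w+\lambda\, f\otimes v\|=\|f\|\,\|w+\lambda v\|$ and $\|f\otimes w+\lambda\, g\otimes w\|=\|f+\lambda g\|\,\|w\|$, which reduces Birkhoff--James orthogonality between these rank~$1$ operators to orthogonality between the corresponding factors by a one-line computation; both directions of each implication then follow immediately. The paper instead works through the norm attainment set: it invokes Proposition~\ref{rank:prop} together with the characterization of operator orthogonality in \cite[Th.~2.1]{SP} (which requires $M_T$ to be of the form $\pm F$ for a face $F$) to pass from ``$Sz\not\perp_B Tz$ for all $z\in M_S$'' to ``$S\not\perp_B T$'', and it handles the functional factor by passing to the adjoint $T^*$, using reflexivity to transfer left (right) symmetry from $T$ to $T^*$. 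Your treatment of the functional factor via $Sx=g(x)w$ avoids the adjoint entirely, and as you correctly observe, reflexivity is never used in your argument --- so your proof actually establishes the conclusion for arbitrary Banach spaces, a genuine strengthening of the stated result. What the paper's route buys in exchange is machinery (the $M_T$-based orthogonality criterion) that is reused elsewhere in their Section~II; your route buys brevity, self-containedness, and greater generality. One cosmetic point: you should note explicitly that the factorization $T=f\otimes w$ of a rank~$1$ operator is unique up to reciprocal scalars and that left (right) symmetry is invariant under nonzero scaling, so the conclusion does not depend on the chosen factorization.
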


	\begin{proof}
	 (i) As $T$ is of rank $1,$ $T$ can be expressed as  $T(x)= f(x)w,$  $ \forall x\in \mathbb{X},$    where $w \in \mathbb{Y}$ and $f \in \mathbb{X}^*.$ Suppose on the contrary that $w$ is not left symmetric. Then there exists $v \in \mathbb{Y}$ such that $w \perp_B v$ but $v \not\perp_B w.$ Define $S : \mathbb{X} \to \mathbb{Y}$ such that $S(x)= f(x) v,$ for any $x \in \mathbb{X}.$ It is straightforward to verify that $M_T= M_S$ and for any $z \in M_T(= M_S),$ $Tz= \pm w, Sz= \pm v.$ Observe that for any $z \in M_T,$ $Tz \perp_B Sz,$ which implies $T \perp_B S.$ Moreover, for any $z \in M_S,$ $Sz \not \perp_B Tz.$ Using Proposition \ref{rank:prop} and \cite[Th.2.1]{SP}, we conclude that $S \not \perp_B T,$ which contradicts that $T$ is left symmetric. Therefore, $w$ is left symmetric. It is easy to check that $T^*(y^*)=  \psi(w)(y^*) f,$ for any $y^* \in \mathbb{Y}^*,$ where $\psi$ is the canonical isometric isomorphism from $\mathbb{X}$ to $\mathbb{X}^{**}.$ Clearly, $T^*$ is rank $1$. Moreover, as $\mathbb{X}, \mathbb{Y}$ are reflexive,  $T$ is left symmetric implies that $T^*$ is also left symmetric. Now, suppose on the contrary that $f$ is not left symmetric. Then there exists $g \in \mathbb{X}^*$ such that $f \perp_B g$ but $g \not \perp_B f.$ Define $A: \mathbb{Y}^* \to \mathbb{X}^*$ such that $A(y^*)= \psi(w)(y^*) g.$ Clearly, $M_{T^*}= M_A.$ Proceeding similarly as above, we obtain that $T^* \perp_B A$ but $A \not\perp_B T^*,$ which contradicts that $T^*$ is left symmetric. This completes the proof of (i). 
	 
	 \smallskip
	 
	 (ii) As in (i) $T$ can be expressed as  $T(x)= f(x)w,$ $ \forall x\in \mathbb{X},$  Let $M_T= \pm F$, for some face $F$ of $B_{\mathbb{X}}$ and $Tx=w,$ for any $x \in F.$ On the contrary, assume that $w$ is not right symmetric. Then there exists $v \in \mathbb{Y}$ such that $v \perp_B w$ but $w \not \perp_B v.$ Define $S: \mathbb{X} \to \mathbb{Y}$ such that $S(x)= f(x) v,$ for any $x \in \mathbb{X}.$ 
	 It is straightforward to see that $M_T= M_S$ and for any $z \in M_T(= M_S),$ $Tz= \pm w, Sz= \pm v.$ Observe that for any $z \in M_S,$ $Sz \perp_B Tz,$ which implies that $S \perp_B T.$ Moreover, for any $z \in M_T,$ $Tz \not \perp_B Sz.$ Using Proposition \ref{rank:prop} and \cite[Th. 2.1]{SP}, we obtain that $T \not \perp_B S,$ which contradicts that $T$ is right symmetric. Therefore, $w$ is right symmetric. Also, we have that $T^*(y^*)=  \psi(w)(y^*) f,$ for any $y^* \in \mathbb{Y}^*,$ where $\psi$ is the canonical isometric isomorphism from $\mathbb{X}$ to $\mathbb{X}^{**}.$ Clearly, $T^*$ is of rank $1$. Moreover, since  $\mathbb{X}, \mathbb{Y}$ are reflexive and  $T$ is right symmetric,  it follows  that $T^*$ is right symmetric. Proceeding similarly as above (the proof of right symmetricity of $w$), we can show that $f$ is right symmetric. This completes the theorem. 
	 	\end{proof}

In \cite[Th. 2.8]{PMW}, it has been shown that for a reflexive and strictly convex Banach space $\mathbb{X},$ if a non-zero $T \in \mathbb{K}(\mathbb{X}, \mathbb{Y})$ is left symmetric, then $T$ is a $rank~1$ operator.  Also in the same article \cite[Th. 2.10]{PMW}, it also has been shown that if both $\mathbb X, \mathbb Y$ are reflexive spaces and  $\mathbb{Y}$ is smooth, then $\mathbb{K}(\mathbb{X}, \mathbb{Y})$ has only $rank~1$ non-zero left symmetric operator. Thus combining Theorem \ref{left:rank} and \cite[Th.2.8, Th.2.10]{PMW}, we can conclude that the left symmetric compact operators   are of special forms as mentioned below.

\begin{theorem}
	Let $\mathbb{X}, \mathbb Y $ be reflexive  Banach spaces and let $T \in S_{\mathbb{K}(\mathbb{X}, \mathbb{Y})}$ be left symmetric.  If either $\mathbb X$ is strictly convex  or $\mathbb Y$ is smooth,  
	then there exists a left symmetric element $w$ in $\mathbb{Y}$ and a left symmetric functional $f$  in $\mathbb{X}^*$ such that 
	\[ Tx = f(x)w, \,\forall x \in \mathbb X.\]  
\end{theorem}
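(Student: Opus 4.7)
The plan is to combine the two external results already cited from \cite{PMW} with the rank one characterization proved in Theorem \ref{left:rank}(i) of the present paper. All the heavy structural work has already been done, so this final statement should reduce to an essentially immediate combination.

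First, I would invoke the dichotomy supplied by \cite[Th. 2.8]{PMW} and \cite[Th. 2.10]{PMW}. Since $T \in S_{\mathbb{K}(\mathbb{X}, \mathbb{Y})}$ is non-zero and left symmetric, and $\mathbb{X}, \mathbb{Y}$ are reflexive, the assumption that either $\mathbb{X}$ is strictly convex (apply \cite[Th. 2.8]{PMW}) or $\mathbb{Y}$ is smooth (apply \cite[Th. 2.10]{PMW}) forces $T$ to be a rank one operator. This is the content of the cited theorems, so no further argument is needed at this step.

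Next, with $T$ now known to be of rank one, I would directly apply Theorem \ref{left:rank}(i). Since $\mathbb{X}, \mathbb{Y}$ are reflexive and $T$ is a rank one left symmetric operator, Theorem \ref{left:rank}(i) guarantees the existence of a left symmetric element $w \in \mathbb{Y}$ and a left symmetric functional $f \in \mathbb{X}^*$ such that
\[
Tx = f(x)\,w, \quad \forall x \in \mathbb{X}.
\]
This yields the desired representation.

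No genuine obstacle arises, as the theorem is explicitly framed in the excerpt as a combination of \cite[Th. 2.8]{PMW}, \cite[Th. 2.10]{PMW}, and Theorem \ref{left:rank}. The only thing that requires care is making sure the hypotheses of Theorem \ref{left:rank}(i) are met, namely reflexivity of both spaces and rank one-ness of $T$; both are supplied by the hypotheses of the current statement together with the preceding step. Thus the proof is a short concatenation of the two ingredients, with no new computation required.
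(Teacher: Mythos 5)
Your proposal is correct and matches the paper's own argument exactly: the paper states this theorem as the immediate concatenation of \cite[Th.\,2.8, Th.\,2.10]{PMW} (forcing rank one under the respective convexity/smoothness hypotheses) with Theorem \ref{left:rank}(i). No issues.
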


Observe that the  conditions in Theorem \ref{left:rank}  are not sufficient. Indeed, for a Hilbert space $\mathbb{H},$ the space $\mathbb{L}(\mathbb{H})$ has no non-zero left symmetric operators  \cite[Th.3.3]{T}. Moreover, $\mathbb{L}(\mathbb{H})$ does not have any $rank ~1$ right symmetric operator as the only right symmetric operators in $\mathbb{L}(\mathbb{H})$ are the isometries and the co-isometries \cite[Cor.4.5]{T}.

\end{document}